\newcommand{\commment}[1]{}
\def\aol{\rule[0.5865ex]{1.38ex}{0.1ex}}
\def\pdra{\mbox{$\,>\mkern-8mu\raisebox{-0.065ex}{\aol}\,$}}
\def\pdla{\mbox{\rotatebox[origin=c]{180}{$\,>\mkern-8mu\raisebox{-0.065ex}{\aol}\,$}}}
\def\mANDORatom#1{\hbox{\hbox to 0pt{$#1\TriangleUp$\hss}$#1\TriangleDown$}}
\newcommand{\mcAND}{%
\mathrel{\ooalign{\raisebox{-0.39ex}{$\mbox{\TriangleUp}$}\cr\kern4.2pt{\raisebox{-0.13ex}{$\cdot$}}}}}
\newcommand{\mcand}{%
\mathrel{\ooalign{$\vartriangle$\cr\kern1.99pt{\raisebox{-0.17ex}{$\cdot$}}}}}
\newcommand{\nAND}{%
\mathrel{\ooalign{$\mbox{\TriangleUp}$\cr\kern0pt$\mbox{\rotatebox[origin=c]{180}{\TriangleUp}}$}}}
\newcommand{\nand}{%
\mathrel{\ooalign{$\vartriangle$\cr\kern0pt$\triangledown$}}}
\newcommand{\mcBAND}{%
\mathrel{\ooalign{\raisebox{-0.39ex}{$\mbox{\FilledTriangleUp}$}\cr\kern4.2pt{\raisebox{-0.13ex}{${\color{white}\cdot}$}}}}}
\newcommand{\mcband}{%
\mathrel{\ooalign{$\blacktriangle$\cr\kern1.99pt{\raisebox{-0.17ex}{${\color{white}\cdot}$}}}}}
\newcommand{\mcRA}{%
\mathrel{\ooalign{
                  \raisebox{-0.3ex}{$\rotatebox[origin=c]{-90}{$\mbox{{\TriangleUp}}$}$}
                                                                            \cr\kern2.7pt{\raisebox{0.2ex}{$\cdot\mkern1.3mu$}}}}}
\newcommand{\mcra}{%
\mathrel{\ooalign{$\,{\vartriangleright\,}$\cr\kern3pt{\raisebox{0ex}{$\cdot$}}}}}
\newcommand{\mcraline}{%
-{\mkern-6mu{\mathrel{\ooalign{$\,{\vartriangleright\,}$\cr\kern3pt{\raisebox{0ex}{$\cdot$}}}}}}}
\newcommand{\mdraline}{%
{\mathrel{\ooalign{$\,{\vartriangleright\,}$\cr\kern3pt{\raisebox{0ex}{$\cdot$}}}}}{\mkern-6mu}-}
\newcommand{\cra}{%
\mathrel{\ooalign{$\,-{\mkern-3mu\vartriangleright\,}$\cr\kern8pt{\raisebox{0ex}{$\cdot$}}}}}
\newcommand{\mcBRA}{%
\mathrel{\ooalign{
                  \raisebox{-0.3ex}{$\rotatebox[origin=c]{-90}{$\mbox{\FilledTriangleUp}$}$}
                                                                            \cr\kern2.7pt{\raisebox{0.2ex}{${\color{white}\cdot}$}}}}}
\newcommand{\mcbra}{%
\mathrel{\ooalign{$\,-{\mkern-3mu\blacktriangleright\,}$\cr\kern8pt{\raisebox{0ex}{$\cdot$}}}}}
\newcommand{\mcLA}{%
\mathrel{\ooalign{
                  \raisebox{-0.3ex}{$\rotatebox[origin=c]{90}{$\mbox{\TriangleUp}$}$}
                                                                                     \cr\kern5.5pt{\raisebox{0.2ex}{$\cdot$}}
                                                                                                                              }}}
\newcommand{\mcla}{%
\mathrel{\ooalign{$\,{\vartriangleleft\,}$\cr\kern5pt{\raisebox{0ex}{$\cdot$}}}}}
\newcommand{\mclaline}{%
-{\mkern-6mu{\mathrel{\ooalign{$\,{\vartriangleleft\,}$\cr\kern5pt{\raisebox{0ex}{$\cdot$}}}}}}}
\newcommand{\mcBLA}{%
\mathrel{\ooalign{
                  \raisebox{-0.3ex}{$\rotatebox[origin=c]{90}{$\mbox{\FilledTriangleUp}$}$}
                                                                                     \cr\kern5.5pt{\raisebox{0.2ex}{${\color{white}\cdot}$}}
                                                                                                                                            }}}
  \numberwithin{equation}{section}
\newcommand{\pla}{\leftarrow}
\newcommand{\aatop}{\ensuremath{\top}\xspace}
\newcommand{\abot}{\ensuremath{\bot}\xspace}
\newcommand{\aand}{\ensuremath{\wedge}\xspace}
\newcommand{\aor}{\ensuremath{\vee}\xspace}
\newcommand{\ararr}{\ensuremath{\rightarrow}\xspace}
\newcommand{\adrarr}{\ensuremath{\,{>\mkern-7mu\raisebox{-0.065ex}{\rule[0.5865ex]{1.38ex}{0.1ex}}}\,}\xspace}
\newcommand{\adlarr}{\ensuremath{\,{\raisebox{-0.065ex}{\rule[0.5865ex]{1.38ex}{0.1ex}} \mkern-7mu<}\,}\xspace}
\newcommand{\AATOP}{\hat{\top}}
\newcommand{\ABOT}{\ensuremath{\check{\bot}}\xspace}
\newcommand{\AAND}{\ensuremath{\:\hat{\wedge}\:}\xspace}
\newcommand{\AOR}{\ensuremath{\:\check{\vee}\:}\xspace}
\newcommand{\ARARR}{\ensuremath{\:\check{\rightarrow}\:}\xspace}
\newcommand{\ALARR}{\ensuremath\:\check{\leftarrow}\:\xspace}
\newcommand{\ADRARR}{\ensuremath{\hat{{\:{>\mkern-7mu\raisebox{-0.065ex}{\rule[0.5865ex]{1.38ex}{0.1ex}}}\:}}}\xspace}
\newcommand{\ADLARR}{\ensuremath{\,\hat{{\:\raisebox{-0.065ex}{\rule[0.5865ex]{1.38ex}{0.1ex}}\mkern-7mu< }\:}}\xspace}
\newcommand{\gI}{%
\mathrel{\ooalign{$\mbox{T}$\cr\kern0pt$\mbox{\rotatebox[origin=c]{180}{T}}$}}}
\newcommand{\gbot}{\rotatebox[origin=c]{180}{$\tau$}}
\def\aol{\rule[0.5865ex]{1.38ex}{0.1ex}}
\newcommand{\WKnowProxy}[2]{%
  {\mathbin{\ooalign{$#1\circ#2 $\cr\hidewidth
   \raise.155ex\hbox{$#1{\scriptstyle{\ast}}#2$}\hidewidth\cr  }}}}
\newcommand{\BKnowProxy}[2]{%
  {\mathbin{\ooalign{$#1\bullet#2 $\cr\hidewidth
   \raise.155ex\hbox{$#1{\scriptstyle{\color{white}{\ast}}}#2$}\hidewidth\cr  }}}}
\newcommand{\fns}{\footnotesize}
\newcommand{\mc}{\multicolumn}
\def\fCenter{{\mbox{$\ \vdash\ $}}}
\renewcommand{\epsilon}{\varepsilon}
\newcommand{\nomj}{\mathbf{j}}
\tikzset{
	treenode/.style = {align=center, inner sep=0pt, text centered},
	Ske/.style = {treenode, ellipse, double, draw=black,
		minimum width=6pt, thick},
	PIA/.style = {treenode, ellipse, black, draw=black,
		minimum width=6pt},
	Crit/.style = {treenode, rectangle, draw=black,
		minimum width=0.5em, minimum height=0.5em}
}
\newcommand{\I}{\textrm{I}}
\newcommand{\gbulf}{\raisebox{0.06em}{$\textcolor{Black}{\scriptstyle(f)}$}}
\newcommand{\gdiaf}{\raisebox{0.06em}{$\textcolor{Black}{\scriptstyle\langle\,{f}\,\rangle}$}}
\newcommand{\gboxf}{\raisebox{0.06em}{$\textcolor{Black}{\scriptstyle\lbrack\,{f}\,\rbrack}$}}
\newcommand{\gbulpx}{\raisebox{0.06em}{$\textcolor{Black}{\scriptstyle(\pi_x)}$}}
\newcommand{\gdiapx}{\raisebox{0.06em}{$\textcolor{Black}{\scriptstyle\langle\,{\pi_x}\,\rangle}$}}
\newcommand{\gboxpx}{\raisebox{0.06em}{$\textcolor{Black}{\scriptstyle\lbrack\,{\pi_x}\,\rbrack}$}}
\newcommand{\gbulpy}{\raisebox{0.06em}{$\textcolor{Black}{\scriptstyle(\pi_y)}$}}
\newcommand{\gdiapy}{\raisebox{0.06em}{$\textcolor{Black}{\scriptstyle\langle\,{\pi_y}\,\rangle}$}}
\newcommand{\gboxpy}{\raisebox{0.06em}{$\textcolor{Black}{\scriptstyle\lbrack\,{\pi_y}\,\rbrack}$}}
\newcommand{\gbulpz}{\raisebox{0.06em}{$\textcolor{Black}{\scriptstyle(\pi_z)}$}}
\newcommand{\gdiapz}{\raisebox{0.06em}{$\textcolor{Black}{\scriptstyle\langle\,{\pi_z}\,\rangle}$}}
\newcommand{\gdiapzu}{\raisebox{0.06em}{$\textcolor{Black}{\scriptstyle\langle\,{\pi_{z_1}}\,\rangle}$}}
\newcommand{\gdiapzn}{\raisebox{0.06em}{$\textcolor{Black}{\scriptstyle\langle\,{\pi_{z_n}}\,\rangle}$}}
\newcommand{\gboxpz}{\raisebox{0.06em}{$\textcolor{Black}{\scriptstyle\lbrack\,{\pi_z}\,\rbrack}$}}
\newcommand{\gbultx}{\raisebox{0.06em}{$\textcolor{Black}{\scriptstyle(t_{\overline{x}})}$}}
\newcommand{\gbultf}{\raisebox{0.06em}{$\textcolor{Black}{\scriptstyle(t_F)}$}}
\newcommand{\gbulst}{\raisebox{0.06em}{$\textcolor{Black}{\scriptstyle(s_T)}$}}
\newcommand{\gdiatf}{\raisebox{0.06em}{$\textcolor{Black}{\scriptstyle\langle\,{t_F}\,\rangle}$}}
\newcommand{\gbulsytf}{\raisebox{0.06em}{$\textcolor{Black}{\scriptstyle(s_y, t_F)}$}}
\newcommand{\gdiasytf}{\raisebox{0.06em}{$\textcolor{Black}{\scriptstyle\langle\,s_y, t_F\,\rangle}$}}
\newcommand{\gdiast}{\raisebox{0.06em}{$\textcolor{Black}{\scriptstyle\langle\,s_T\,\rangle}$}}
\newcommand{\gbulzytf}{\raisebox{0.06em}{$\textcolor{Black}{\scriptstyle(z_y, t_F)}$}}
\newcommand{\gdiazytf}{\raisebox{0.06em}{$\textcolor{Black}{\scriptstyle\langle\,z_y, t_F\,\rangle}$}}
\newcommand{\gbulstxt}{\raisebox{0.06em}{$\textcolor{Black}{\scriptstyle(s(t_{\overline{x}}/\overline{x})_T)}$}}
\newcommand{\gdiastxt}{\raisebox{0.06em}{$\textcolor{Black}{\scriptstyle\langle\,s(t_{\overline{x}}/\overline{x})_T\,\rangle}$}}
\newcommand{\DIAX}{\raisebox{0.06em}{$\textcolor{Black}{\scriptstyle\langle\,\hat{x}\,\rangle}$}}
\newcommand{\diax}{\raisebox{0.06em}{$\textcolor{Black}{\scriptstyle\langle\,{x}\,\rangle}$}}
\newcommand{\diaxf}{\raisebox{0.06em}{$\textcolor{Black}{\scriptstyle\langle\,{x}\,\rangle}_{F}$}}
\newcommand{\DIAY}{\raisebox{0.06em}{$\textcolor{Black}{\scriptstyle\langle\,\hat{y}\,\rangle}$}}
\newcommand{\diay}{\raisebox{0.06em}{$\textcolor{Black}{\scriptstyle\langle\,{y}\,\rangle}$}}
\newcommand{\DIAZ}{\raisebox{0.06em}{$\textcolor{Black}{\scriptstyle\langle\,\hat{z}\,\rangle}$}}
\newcommand{\diaz}{\raisebox{0.06em}{$\textcolor{Black}{\scriptstyle\langle\,{z}\,\rangle}$}}
\newcommand{\DIAXU}{\raisebox{0.06em}{$\textcolor{Black}{\scriptstyle\langle\,\hat{x}_1\,\rangle}$}}
\newcommand{\BOXX}{\raisebox{0.06em}{$\textcolor{Black}{\scriptstyle\lbrack\,\check{x}\,\rbrack}$}}
\newcommand{\boxx}{\raisebox{0.06em}{$\textcolor{Black}{\scriptstyle\lbrack\,{x}\,\rbrack}$}}
\newcommand{\boxxf}{\raisebox{0.06em}{$\textcolor{Black}{\scriptstyle\lbrack\,{x\,\rbrack}}_{F}$}}
\newcommand{\BOXY}{\raisebox{0.06em}{$\textcolor{Black}{\scriptstyle\lbrack\,\check{y}\,\rbrack}$}}
\newcommand{\boxy}{\raisebox{0.06em}{$\textcolor{Black}{\scriptstyle\lbrack\,{y}\,\rbrack}$}}
\newcommand{\BOXZ}{\raisebox{0.06em}{$\textcolor{Black}{\scriptstyle\lbrack\,\check{z}\,\rbrack}$}}
\newcommand{\boxz}{\raisebox{0.06em}{$\textcolor{Black}{\scriptstyle\lbrack\,{z}\,\rbrack}$}}
\newcommand{\BOXXU}{\raisebox{0.06em}{$\textcolor{Black}{\scriptstyle\lbrack\,\check{x_1}\,\rbrack}$}}
\newcommand{\BOXXD}{\raisebox{0.06em}{$\textcolor{Black}{\scriptstyle\lbrack\,\check{x_2}\,\rbrack}$}}
\newcommand{\BOXXK}{\raisebox{0.06em}{$\textcolor{Black}{\scriptstyle\lbrack\,\check{x_k}\,\rbrack}$}}
\newcommand{\DIAXK}{\raisebox{0.06em}{$\textcolor{Black}{\scriptstyle\langle\,\hat{x_k}\,\rangle}$}}
\newcommand{\DIAXD}{\raisebox{0.06em}{$\textcolor{Black}{\scriptstyle\langle\,\hat{x_2}\,\rangle}$}}
\newcommand{\bulstar}{\raisebox{0.06em}{$\scriptstyle(\star)$}}
\newcommand{\DIASTAR}{\raisebox{0.06em}{$\textcolor{Black}{\scriptstyle\langle\,\hat{\star}\,\rangle}$}}
\newcommand{\BOXSTAR}{\raisebox{0.06em}{$\textcolor{Black}{\scriptstyle\lbrack\,\check{\star}\,\rbrack}$}}
\newcommand{\BULX}{\raisebox{0.06em}{$\textcolor{Black}{\scriptstyle(\tilde{x})}$}}
\newcommand{\bulx}{\raisebox{0.06em}{$\textcolor{Black}{\scriptstyle(x)}$}}
\newcommand{\BULV}{\raisebox{0.06em}{$\textcolor{Black}{\scriptstyle(\tilde{v})}$}}
\newcommand{\bulxt}{\raisebox{0.06em}{$\textcolor{Black}{\scriptstyle(x)}_{\scriptstyle T}$}}
\newcommand{\BULZD}{\raisebox{0.06em}{$\textcolor{Black}{\scriptstyle(\tilde{z}_2)}$}}
\newcommand{\BULY}{\raisebox{0.06em}{$\textcolor{Black}{\scriptstyle(\tilde{y})}$}}
\newcommand{\buly}{\raisebox{0.06em}{$\textcolor{Black}{\scriptstyle(y)}$}}
\newcommand{\BULZ}{\raisebox{0.06em}{$\textcolor{Black}{\scriptstyle(\tilde{z})}$}}
\newcommand{\bulz}{\raisebox{0.06em}{$\textcolor{Black}{\scriptstyle(z)}$}}
\newcommand{\BULXU}{\raisebox{0.06em}{$\textcolor{Black}{\scriptstyle(\tilde{x}_1)}$}}
\newcommand{\bulxu}{\raisebox{0.06em}{$\textcolor{Black}{\scriptstyle(x_1)}$}}
\newcommand{\BULXD}{\raisebox{0.06em}{$\textcolor{Black}{\scriptstyle(\tilde{x}_2)}$}}
\newcommand{\BULYU}{\raisebox{0.06em}{$\textcolor{Black}{\scriptstyle(\tilde{y}_1)}$}}
\newcommand{\bulyu}{\raisebox{0.06em}{$\textcolor{Black}{\scriptstyle(y_1)}$}}
\newcommand{\BULZU}{\raisebox{0.06em}{$\textcolor{Black}{\scriptstyle(\tilde{z}_1)}$}}
\newcommand{\bulzu}{\raisebox{0.06em}{$\textcolor{Black}{\scriptstyle(z_1)}$}}
\newcommand{\BULYM}{\raisebox{0.06em}{$\textcolor{Black}{\scriptstyle(\tilde{y}_m)}$}}
\newcommand{\bulym}{\raisebox{0.06em}{$\textcolor{Black}{\scriptstyle(y_m)}$}}
\newcommand{\BULXN}{\raisebox{0.06em}{$\textcolor{Black}{\scriptstyle(\tilde{x}_n)}$}}
\newcommand{\bulxn}{\raisebox{0.06em}{$\textcolor{Black}{\scriptstyle(x_n)}$}}
\newcommand{\BULXK}{\raisebox{0.06em}{$\textcolor{Black}{\scriptstyle(\tilde{x}_k)}$}}
\newcommand{\bulxk}{\raisebox{0.06em}{$\textcolor{Black}{\scriptstyle(x_k)}$}}
\newcommand{\BULZK}{\raisebox{0.06em}{$\textcolor{Black}{\scriptstyle(\tilde{z}_k)}$}}
\newcommand{\bulzk}{\raisebox{0.06em}{$\textcolor{Black}{\scriptstyle(z_k)}$}}
\newcommand{\DIATFP}{\raisebox{0.06em}{$\textcolor{Black}{\scriptstyle\langle\,\hat{t}_{F'}\,\rangle}$}}
\newcommand{\diatfp}{\raisebox{0.06em}{$\textcolor{Black}{\scriptstyle\langle\,{t}_{F'}\,\rangle}$}}
\newcommand{\BOXTFP}{\raisebox{0.06em}{$\textcolor{Black}{\scriptstyle\lbrack\,\check{t}_{F'}\,\rbrack}$}}
\newcommand{\boxtfp}{\raisebox{0.06em}{$\textcolor{Black}{\scriptstyle\lbrack\,{t}_{F'}\,\rbrack}$}}
\newcommand{\DIASF}{\raisebox{0.06em}{$\textcolor{Black}{\scriptstyle\langle\,\hat{s}_{F}\,\rangle}$}}
\newcommand{\DIATF}{\raisebox{0.06em}{$\textcolor{Black}{\scriptstyle\langle\,\hat{t}_{F}\,\rangle}$}}
\newcommand{\diatf}{\raisebox{0.06em}{$\textcolor{Black}{\scriptstyle\langle\,{t}_{F}\,\rangle}$}}
\newcommand{\BOXSF}{\raisebox{0.06em}{$\textcolor{Black}{\scriptstyle\lbrack\,\check{s}_{F}\,\rbrack}$}}
\newcommand{\BOXTF}{\raisebox{0.06em}{$\textcolor{Black}{\scriptstyle\lbrack\,\check{t}_{F}\,\rbrack}$}}
\newcommand{\boxtf}{\raisebox{0.06em}{$\textcolor{Black}{\scriptstyle\lbrack\,{t}_{F}\,\rbrack}$}}
\newcommand{\cgr}[1]{\raisebox{0.06em}{$\textcolor{Black}{\scriptstyle #1}$}}
\definecolor{Black}{rgb}{0.31, 0.31, 0.31}
\newcommand{\sgr}[1]{\raisebox{0.06em}{$\textcolor{Black}{\scriptstyle #1}$}}
\newcommand{\BULSF}{\raisebox{0.06em}{$\textcolor{Black}{\scriptstyle(\tilde{s}_F)}$}}
\newcommand{\bulsf}{\raisebox{0.06em}{$\textcolor{Black}{\scriptstyle(s_F)}$}}
\newcommand{\BULTF}{\raisebox{0.06em}{$\textcolor{Black}{\scriptstyle(\tilde{t}_F)}$}}
\newcommand{\bultf}{\raisebox{0.06em}{$\textcolor{Black}{\scriptstyle(t_F)}$}}
\newcommand{\BULRT}{\raisebox{0.06em}{$\textcolor{Black}{\scriptstyle(\tilde{r}_T)}$}}
\newcommand{\bulrt}{\raisebox{0.06em}{$\textcolor{Black}{\scriptstyle(r_T)}$}}
\newcommand{\bulst}{\raisebox{0.06em}{$\textcolor{Black}{\scriptstyle(s_T)}$}}
\newcommand{\BULSFP}{\raisebox{0.06em}{$\textcolor{Black}{\scriptstyle(\tilde{s}_{F'})}$}}
\newcommand{\BULTFP}{\raisebox{0.06em}{$\textcolor{Black}{\scriptstyle(\tilde{t}_{F'})}$}}
\newcommand{\bultfp}{\raisebox{0.06em}{$\textcolor{Black}{\scriptstyle(t_{F'})}$}}
\newcommand{\BULSX}{\raisebox{0.06em}{$\textcolor{Black}{\scriptstyle(\tilde{s}_x)}$}}
\newcommand{\bultx}{\raisebox{0.06em}{$\textcolor{Black}{\scriptstyle(t_x)}$}}
\newcommand{\bulstxt}{\raisebox{0.06em}{$\textcolor{Black}{\scriptstyle(s(t_{\overline{x}}/\overline{x})_T)}$}}
\newcommand{\BULSTXFP}{\raisebox{0.06em}{$\textcolor{Black}{\scriptstyle(\tilde{s(t_{\overline{x}}/\overline{x})}_{F'})}$}}
\newcommand{\BULRSXT}{\raisebox{0.06em}{$\textcolor{Black}{\scriptstyle(\tilde{r(s_{\overline{x}}/\overline{x})}_T)}$}}
\newcommand{\bulrsxt}{\raisebox{0.06em}{$\textcolor{Black}{\scriptstyle(r(s_{\overline{x}}/\overline{x})_T)}$}}
\newcommand{\BULSFY}{\raisebox{0.06em}{$\textcolor{Black}{\scriptstyle(\tilde{s}_{F\setminus\{y\}})}$}}
\newcommand{\bulsfy}{\raisebox{0.06em}{$\textcolor{Black}{\scriptstyle(s_{F \setminus \{y\} })}$}}
\newcommand{\BULSVZYX}{\raisebox{0.06em}{$\textcolor{Black}{\scriptstyle(\tilde{s_v, \overline{z}_{\overline{z}}, \overline{y}_{\overline{y}}, \overline{x}_{\overline{x}}})}$}}
\newcommand{\bulsvzyx}{\raisebox{0.06em}{$\textcolor{Black}{\scriptstyle(s_v, \overline{z}_{\overline{z}}, \overline{y}_{\overline{y}}, \overline{x}_{\overline{x}})}$}}
\newcommand{\BULTVZYX}{\raisebox{0.06em}{$\textcolor{Black}{\scriptstyle(\tilde{t_v, \overline{z}_{\overline{z}}, \overline{y}_{\overline{y}}, \overline{x}_{\overline{x}}})}$}}
\newcommand{\bultvzyx}{\raisebox{0.06em}{$\textcolor{Black}{\scriptstyle(t_v, \overline{z}_{\overline{z}}, \overline{y}_{\overline{y}}, \overline{x}_{\overline{x}})}$}}
\newcommand{\BULZYX}{\raisebox{0.06em}{$\textcolor{Black}{\scriptstyle(\tilde{\overline{z}_{\overline{z}}, \overline{y}_{\overline{y}}, \overline{x}_{\overline{x}}})}$}}
\newcommand{\BULTYRX}{\raisebox{0.06em}{$\textcolor{Black}{\scriptstyle(\tilde{t_y, r_{\overline{x}}})}$}}
\newcommand{\BULSYRX}{\raisebox{0.06em}{$\textcolor{Black}{\scriptstyle(\tilde{s_y, r_{\overline{x}}})}$}}
\newcommand{\BULZYTOFP}{\raisebox{0.06em}{$\textcolor{Black}{\scriptstyle(\tilde{z_y, t_{F'}})}$}}
\newcommand{\BULSYTOF}{\raisebox{0.06em}{$\textcolor{Black}{\scriptstyle(\tilde{s_y, t_F})}$}}
\newcommand{\BULTXYOY}{\raisebox{0.06em}{$\textcolor{Black}{\scriptstyle(\tilde{t_x, \overline{y}_{\overline{y}}})}$}}
\newcommand{\BOXTXYOY}{\raisebox{0.06em}{$\textcolor{Black}{\scriptstyle\lbrack\,\check{t_x, \overline{y}_{\overline{y}}}\,\rbrack}$}}
\newcommand{\BULYOYO}{\raisebox{0.06em}{$\textcolor{Black}{\scriptstyle(\tilde{\overline{y}_{\overline{y}}})}$}}
\newcommand{\bulyoyo}{\raisebox{0.06em}{$\textcolor{Black}{\scriptstyle(\overline{y}_{\overline{y}})}$}}
\newcommand{\DIAZYSOF}{\raisebox{0.06em}{$\textcolor{Black}{\scriptstyle\langle\,\hat{z_y, s_F}\,\rangle}$}}
\newcommand{\bulrxtf}{\raisebox{0.06em}{$\textcolor{Black}{\scriptstyle(r_x, t_F)}$}}
\newcommand{\bulsxtf}{\raisebox{0.06em}{$\textcolor{Black}{\scriptstyle(s_x, t_F)}$}}
\newcommand{\bulsytf}{\raisebox{0.06em}{$\textcolor{Black}{\scriptstyle(s_y, t_F)}$}}
\newcommand{\bulzytf}{\raisebox{0.06em}{$\textcolor{Black}{\scriptstyle(z_y, t_F)}$}}
\newcommand{\BULZYSF}{\raisebox{0.06em}{$\textcolor{Black}{\scriptstyle(\tilde{z_y, s_F})}$}}
\newcommand{\bulzysf}{\raisebox{0.06em}{$\textcolor{Black}{\scriptstyle(z_y, s_F)}$}}
\newcommand{\BULZYTFP}{\raisebox{0.06em}{$\textcolor{Black}{\scriptstyle(\tilde{z_y, t_{F'}})}$}}
\newcommand{\bulzytfp}{\raisebox{0.06em}{$\textcolor{Black}{\scriptstyle(z_y, t_{F'})}$}}
\newcommand{\RTFP}{\hat{R}({t_{F'}})}
\newcommand{\RSX}{\hat{R}({s_{\overline{x}}})}
\newcommand{\RTX}{\hat{R}({t_{\overline{x}}})}
\newcommand{\RTSXF}{\hat{R}(t(s_{\overline{x}}/\overline{x})_{F})}
\newcommand{\EST}{s \:\hat{=}\: t}
\newcommand{\ETT}{t \:\hat{=}\: t}
\newcommand{\ETS}{t \:\hat{=}\: s}
\newcommand{\ERTRF}{r_3 \:\hat{=}\: r_4}
\newcommand{\ERURT}{r_1 \:\hat{=}\: r_2}
\newcommand{\Q}[1]{\ensuremath{[\!\langle\texttt{Q}{#1}\rangle\!]}\xspace}
\newcommand{\IP}[1]{(\!({#1})\!)\xspace}
\newcommand{\ip}[1]{({#1})\xspace}
\newcommand{\ST}[1]{(\!({#1})\!)\xspace}
\newcommand{\st}[1]{({#1})\xspace}
\newcommand{\SA}[1]{[\!\langle{#1}\rangle\!]}
\newcommand{\Var}{\ensuremath{\mathsf{Var}}\xspace}
\newcommand{\FV}{\ensuremath{\mathsf{FV}}\xspace}
\newcommand{\xdashrightarrow}[2][]{\ext@arrow 0359\rightarrowfill@@{#1}{#2}}
\newcommand{\xdasharrow}[2][->]{
	\tikz[baseline=-\the\dimexpr\fontdimen22\textfont2\relax]{
		\node[anchor=south,font=\scriptsize, inner ysep=1.5pt,outer xsep=2.2pt](x){#2};
		\draw[shorten <=3.4pt,shorten >=3.4pt,dashed,#1](x.south west)--(x.south east);
	}
}
\theoremstyle{plain}
\newtheorem{thm}{Theorem}
\newtheorem{lem}[thm]{Lemma}
\newtheorem{cor}[thm]{Corollary}
\newtheorem{prop}[thm]{Proposition}
\newtheorem{lemma}[thm]{Lemma}
\theoremstyle{definition}
\newtheorem{definition}[thm]{Definition}
\newtheorem{claim}[thm]{Claim}
\newtheorem{remark}[thm]{Remark}
{\bfseries}{\itshape}
\renewenvironment{proof}[1]{\par\noindent\textit{Proof}.\space#1}{\hfill  $\square$}
\title{First order logic properly displayed}
\author[2]{Samuel Balco}
\author[1]{Giuseppe Greco}
\author[4]{Alexander Kurz}
\author[4]{Andrew Moshier}
\author[1,3]{Alessandra Palmigiano\thanks{The research of the second and fifth author has been funded in part by the NWO grant KIVI.2019.001.}}
\author[1]{Apostolos Tzimoulis}
\affil[1]{Vrije Universiteit Amsterdam}
\affil[2]{University of Leicester}
\affil[3]{University of Johannesburg}
\affil[4]{Chapman University} 
\date{}
\begin{document}

\maketitle

\begin{abstract}
We introduce a proper display calculus for first-order logic, of which we prove soundness, completeness, conservativity, subformula property and cut elimination via a Belnap-style metatheorem. All inference rules are closed under uniform substitution and are without side conditions.

\ 

\noindent {\em Keywords:} First-order classical logic, proper display calculi, properly displayable logics, multi-type calculi, multi-type algebras. \\
{\em Math.~Subject Class.} 03B10, 03B35, 03B45, 03B47, 03F03, 03F05, 03F07, 03G05, 03G10, 03G15, 03G30, 06A06, 06A11, 06D10, 06D50, 06E15.
\end{abstract}


\section{Introduction}

In the proof-theoretic literature, the  treatment of quantifiers in first-order logic typically follows the lines of the original Gentzen's sequent calculi, and correspondingly, the introduction rules for first-order quantifiers\footnote{\textcolor{red}{We use the standard notation from \cite{Negri_van_Plato} where $A[t/x]$ signifies that the free occurrences of the variable $x$ in the formula $A$ are substituted with the term $t$ (see Section \ref{ssec:folprel} for the formal definition of substitutions). We also assume the usual eigenvariable conditions, namely $y$ must not occur free anywhere in the conclusion of the rules $\forall_R$ and $\exists_L$. Notice that the notation $A[y/x]$ in the premises of $\forall_R$ and $\exists_L$ is more restrictive and signifies that the formula has been derived for an arbitrary variable $y$ (not just a term $t$).}} are mostly additive: 
\begin{center}
	\begin{tabular}{rcl}
		\AX$A[t/x],\Gamma  \fCenter \Delta$
		\LeftLabel{$\forall_L$}
		\UI$\forall xA,\Gamma\fCenter \Delta$
		\DisplayProof
		&&
		\AX$\Gamma \fCenter A[y/x],\Delta$
		\RightLabel{$\forall_R$}
		\UI$\Gamma \fCenter\forall x A,\Delta$
		\DisplayProof
		\\
		&&\\
		
		\AX$A[y/x],\Gamma \fCenter \Delta$
		\LeftLabel{$\exists_L$}
		\UI$\exists x A,\Gamma \fCenter \Delta$
		\DisplayProof
		&&
		\AX$\Gamma \fCenter A[t/x],\Delta$
		\RightLabel{$\exists_R$}
		\UI$\Gamma \fCenter \exists x A,\Delta$
		\DisplayProof
		
		\\
	\end{tabular}
\end{center}

This is reflected also in the early display calculi literature, where quantifiers are not paired with structural connectives, up to Wansing \cite{Wansing1999,wansing1998displaying,ciabattoni2014hypersequent}, who introduces a display calculus for fragments of first-order logic based on the idea that quantifiers can be treated as modal operators in modal logic, and correspondingly defines introduction rules for quantifiers which involve their structural counterparts.

The key observation motivating this approach is that the existential quantifier behaves similarly to a modal diamond operator, and dually, the universal quantifier behaves similarly to a modal box operator. What underlies these similarities (observed and exploited in \cite{kuhn1980quantifiers,vbpredicate,montague,von1952double,Wansing1999,wansing1998displaying}), and semantically supports the requirement that the resulting calculus enjoys the display property (see e.g.~\cite{Belnap,wansing1998displaying,GJLPT-LE-logics} for the formal definition of this notion), is the order-theoretic notion of adjunction: indeed, the set-theoretic interpretations of the existential and universal quantifiers are the left and right adjoint  of the inverse projection map respectively, and more generally, in categorical semantics, the left and right adjoint of the pullbacks along projections \cite{lawvere1966functorial},\cite[Chapter 15]{goldblatt2014topoi}.

The display calculus of \cite{Wansing1999} contains rules with side conditions on the free and bound variables of formulas, similar to the ones of the original Gentzen sequent calculus. This implies that the introduction rules for first-order quantifiers are not closed under uniform substitution, i.e.\ the display calculus introduced in \cite{Wansing1999,wansing1998displaying} is not \emph{proper} \cite[Section 4.1]{wansing1998displaying}. 

In this paper we overcome this difficulty and introduce a {\em proper} display calculus for first-order logic. The main idea is that, as was the case of other logical frameworks (cf.~e.g.~\cite{Multitype,inquisitive,GP:linear}), a suitable \emph{multi-type} presentation makes it possible to encode the side conditions into analytic (structural) rules involving different types. Wansing's insight that quantifiers can be treated proof-theoretically as modal operators naturally embeds into the multi-type approach by simply regarding $(\forall x)$ and $(\exists x)$ as modal operators  bridging different types (i.e.~as \emph{heterogeneous} operators). Following Lawvere \cite{lawvere1966functorial,Lawvere:equality,Lawvere:adjointness} and Halmos \cite{halmos1954polyadic}, this requires to consider as many types as there are finite sets of free variables; that is, two first-order formulas have the same type if and only if they have exactly the same free variables.

Following these ideas, we introduce a proper multi-type display calculus for classical first-order logic, and show its soundness, completeness, conservativity, cut elimination and subformula property. The paper is organized as follows. In Section \ref{ssec:folprel}, we gather  preliminary notions, definitions and notation for first-order logic. In Section \ref{sec:semanalfol}, we recast the models of first-order logic in a framework amenable to support the semantics of the multi-type calculus for first-order logic, especially regarding the adjunction properties of quantification and substitution. From this semantic framework, we extract the defining conditions of the algebraic multi-type semantics for the calculus. In Section \ref{sec:mtfol}, we introduce the multi-type language of first-order logic. In Section \ref{sec:multicalc}, we introduce the display calculus for first-order classical logic. In Section \ref{sec:folproperties} we prove its soundness, completeness, conservativity, cut elimination and  subformula property. In Section \ref{sec:condir}, we summarize the results of this article and collect further research directions.

\section{Preliminaries on first-order logic}\label{ssec:folprel}

In this section we collect definitions and basic facts about first-order logic and introduce the notation that will be used throughout the paper. 

\paragraph{Language.} Let $\mathsf{Var}=\{v_1,\ldots,v_n,\ldots\}$ be a countable set of variables. Throughout the paper we let $\mathcal{P}_\omega(\mathsf{Var})$ denote the set of finite subsets of $\mathsf{Var}$. A first-order logic $\mathcal{L}$ over $\mathsf{Var}$ consists of a set of relation symbols $(R_i)_{i\in I}$ each of finite arity $n_i$; a set of function symbols $(f_j)_{j\in J}$ each of finite arity $n_j$; and a set of constant symbols $(c_k)_{k\in K}$ ($0$-ary functions). The language of first-order logic is built up from \emph{terms} defined recursively as follows:
\begin{align*}\mathsf{Trm}\ni t::= v_m\mid c_k\mid f_j(t,\ldots,t).\end{align*}
The formulas of first-order logic are defined recursively as follows:
\begin{align*}\mathcal{L}\ni A::=R_i(\overline{t})\mid t_1= t_2\mid\top\mid\bot\mid A\aand A\mid A\lor A\mid A\rightarrow A\mid \forall yA\mid \exists yA \end{align*}
\noindent where $\overline{t}$ is a sequence of terms of arity $n_i$ and negation $\neg A$ is defined as usual as $A \rightarrow \bot$. 

 For any term $t$ and formula $A$ we let $\mathsf{FV}(t)\in\mathcal{P}_\omega(\mathsf{Var})$ and $\mathsf{FV}(A)\in\mathcal{P}_\omega(\mathsf{Var})$ denote the sets of free variables of $t$ and $A$ respectively, recursively defined as follows:
\begin{center}
	\begin{tabular}{r c l }
		$\mathsf{FV}(v_m)$ &$ = $& $\{v_m\}$\\
		$\mathsf{FV}(c_k)$ &$ = $& $\varnothing$\\
		$\mathsf{FV}(f_j(t_1,\ldots,t_{n_j}))$& $= $& $\bigcup_{1\leq\ell\leq n_j}\mathsf{FV}(t_\ell)$ \\
		$\mathsf{FV}(\top)$ &$ = $& $\varnothing$\\
		$\mathsf{FV}(\bot)$ &$ = $& $\varnothing$\\
		$\mathsf{FV}(t_1=t_2)$ &$ = $& $\mathsf{FV}(t_1)\cup\mathsf{FV}(t_1)$\\
		$\mathsf{FV}(R_i(t_1,\ldots,t_{n_i}))$ &$ = $& $\bigcup_{1\leq\ell\leq n_i}\mathsf{FV}(t_\ell)$\\
		$\mathsf{FV}(A\aand B)$ &$ = $& $\mathsf{FV}(A)\cup \mathsf{FV}(B)$ \\
		$\mathsf{FV}(A\lor B)$ &$ = $& $\mathsf{FV}(A)\cup \mathsf{FV}(B)$ \\
		$\mathsf{FV}(A\rightarrow B)$ &$ = $& $\mathsf{FV}(A)\cup \mathsf{FV}(B)$ \\
		$\mathsf{FV}(\forall yA)$ &$ = $& $\mathsf{FV}(A)\setminus\{y\}$ \\
		$\mathsf{FV}(\exists yA)$ &$ = $& $\mathsf{FV}(A)\setminus\{y\}$. \\
	\end{tabular}
\end{center}
 In what follows, we will often identify sequences of variables and terms with the set containing the elements of the sequence. Hence, given a sequence of terms $\overline{t}$ we define $\mathsf{FV}(\overline{t})$ as the union of the sets of free variables of the elements of the sequence. Finally, if $\overline{t}$ is a sequence of terms and $s$ is a term, we let  $s,\overline{t}$ denote the sequence obtained by prefixing $s$ to $\overline{t}$. We will also abuse notation and write e.g.~$\overline{x}\supseteq \mathsf{FV}(s)$ to signify that every free variable in $s$ occurs in the sequence $\overline{x}$. In what follows we will conflate notation and use $\mathcal{L}$ to denote signature, formulas and set of theorems.

\paragraph{Substitution.} In our treatment of first-order logic we assume that substitution happens simultaneously for  all free variables of a term or a formula. If $\overline{x}\supseteq\mathsf{FV}(s)$ and $\overline{x}\supseteq\mathsf{FV}(A)$ and for each $v_m\in\overline{x}$ we let  $t_{v_m}$ denote the corresponding term which will be substituted for $v_m$, then $t_{\overline{x}}$ denotes a sequence of terms over $\overline{x}$\footnote{\label{footnote:substitution}That is,  $t_{\overline{x}}\in \mathsf{Trm}^{\overline{x}}$ and $t_{\overline{x}}(v_m)=t_{v_m}$.}, $(t_{\overline{x}}/\overline{x})$ denotes the simultaneous substitution of each $v_m$ with $t_{v_m}$ and we define $s(t_{\overline{x}}/\overline{x})$ and $A(t_{\overline{x}}/\overline{x})$ recursively as follows:
\begin{center}
	\begin{tabular}{r c l }
		$v_n(t_{\overline{x}}/\overline{x})$ &$ = $&
		$t_{v_n}$\\
		$c_k(t_{\overline{x}}/\overline{x})$ &$ = $&
		$c_k$\\
		$f(s_1,\ldots,s_n)(t_{\overline{x}}/\overline{x})$& $= $& $f(s_1(t_{\overline{x}}/\overline{x}),\ldots,s_n(t_{\overline{x}}/\overline{x}))$ \\
		$\top(t_{\overline{x}}/\overline{x})$ &$ = $&
		$\top$\\
		$\bot(t_{\overline{x}}/\overline{x})$ &$ = $&
		$\bot$\\
		$(s_1=s_2)(t_{\overline{x}}/\overline{x})$ &$ = $& $s_1(t_{\overline{x}}/\overline{x})=s_2(t_{\overline{x}}/\overline{x})$\\
		$R(s_1,\ldots,s_n)(t_{\overline{x}}/\overline{x})$ &$ = $& $R(s_1(t_{\overline{x}}/\overline{x}),\ldots,s_n(t_{\overline{x}}/\overline{x}))$\\
		$(A\aand B)(t_{\overline{x}}/\overline{x})$ &$ = $& $A(t_{\overline{x}}/\overline{x}) \aand B(t_{\overline{x}}/\overline{x})$ \\
		$(A\lor B)(t_{\overline{x}}/\overline{x})$ &$ = $& $A(t_{\overline{x}}/\overline{x}) \lor B(t_{\overline{x}}/\overline{x})$ \\
		$(A\rightarrow B)(t_{\overline{x}}/\overline{x})$ &$ = $& $A(t_{\overline{x}}/\overline{x}) \rightarrow B(t_{\overline{x}}/\overline{x})$ \\
		$(\forall yA)(t_{\overline{x}}/\overline{x})$ &$ = $& $\forall z(A(t_{\overline{x}}/\overline{x})),\quad z\notin\mathsf{FV}(t_{\overline{x}})\cup\mathsf{FV}(\forall yA)$ \\
		$(\exists yA)(t_{\overline{x}}/\overline{x})$ &$ = $& $\exists z(A(t_{\overline{x}}/\overline{x})),\quad z\notin\mathsf{FV}(t_{\overline{x}})\cup\mathsf{FV}(\exists yA)$ \\
	\end{tabular}
\end{center}

\paragraph{Models.}The models of a first-order logic $\mathcal{L}$  are tuples $$M=(D,(R^D_i)_{i\in I},(f^D_j)_{j\in J},(c^D_k)_{k\in K})$$ where $D$ is a non empty set (the domain of the model) and  $R^D_i,f^D_j,c^D_k$ are concrete $n_i$-ary relations over $D$, $n_j$-ary functions on $D$ and elements of $D$ respectively interpreting the symbols of the language in the model $M$. The interpretation of a formula in $M$ is facilitated by variable interpretation maps $\nu:\mathsf{Var}\to D$ and is given recursively as follows:
{\small \begin{center}
	\begin{tabular}{l c l }
		$(v_m)^D$ & $=$ & $\nu(v_m)$\\
		$(f_j(t_1,\ldots,t_{n_j}))^D$ & $=$ & $(f^D_j(t^D_1,\ldots,t^D_{n_j}))$\\
		$M,\nu\models \top$ & & Always\\
		$M,\nu\models \bot$ & & Never\\
		$M,\nu\models R_i(t_{\overline{x}})$ &$ \iff$& $R^D_i(t^D_{\overline{x}}) $\\
		$M,\nu\models t_1=t_2$ &$ \iff$& $t^D_1=t^D_2 $\\
		$M,\nu\models A\aand B$ &$ \iff$& $M,\nu\models A\text{ and }M,\nu\models B$\\
		$M,\nu\models A\lor B$ &$ \iff$& $M,\nu\models A\text{ or }M,\nu\models B$\\
		$M,\nu\models A\rightarrow B$ &$ \iff$& $M,\nu\models A\text{ implies }M,\nu\models B$\\
		$M,\nu\models \forall yA$ &$ \iff$& $M,\nu'\models A$ for all $\nu'$ such that $\nu'(x)=\nu(x)$ for all $x\neq y$\\	
		$M,\nu\models \exists yA$ &$ \iff$& $M,\nu'\models A$ for some $\nu'$ such that $\nu'(x)=\nu(x)$ for all $x\neq y$.\\	
\end{tabular}
\end{center}}

\paragraph{Axiomatic system.}The following deductive system \cite{Enderton01}, denoted with $\vdash_{\mathrm{FO}}$, is sound and complete w.r.t.\ the models mentioned above:
\begin{enumerate}
	\item Propositional tautologies;
	\item $\forall x (B\rightarrow C)\rightarrow (\forall xB\rightarrow\forall xC)$;
	\item $B\rightarrow\forall xB$, where $x\notin\mathsf{FV}(B)$;
	\item $\forall xB\rightarrow B(t/x)$;
	\item Equality axioms:
		\begin{enumerate}
			\item $t=t$;
			\item $s=t\to  (A(s/x)\to A(t/x))$;
		\end{enumerate}
	\item All universal closures of instances of the above;
	\item Modus ponens.
\end{enumerate}

\section{Semantic analysis}\label{sec:semanalfol}

\textcolor{red}{In the present section we use the same notational conventions we will adopt in the calculus D.FO and D.FO$^\ast$. The letter symbolizing a function could occur in round, angular or square brackets. Angular (alluding to diamonds) for left adjoints, Round for functions that are simultaneously both left and right adjoints, and square (alluding to boxes) for right adjoints denote functions here and not function symbols.}
	

In order to develop a multi-type environment in which the insights discussed in the introduction can be carried out, we need as many types as there are finite sets of variables. Therefore, types $F$ correspond to elements of $\mathcal{P}_\omega(\mathsf{Var})$. On the semantic side, each such type interprets formulas $A$ such that $\mathsf{FV}(A)=F$, and hence, being closed under all propositional connectives, each type is naturally endowed with the structure of a Boolean algebra. Thus, given a first-order logic $\mathcal{L}$ over a countable set of variables $\mathsf{Var}$, and letting $\mathsf{Var}_x:=\mathsf{Var}\setminus\{x\}$ for every $x\in\mathsf{Var}$, the semantic environment that supports the multi-type presentation of $\mathcal{L}$ is based on structures
$\mathbb{H}=(\mathcal{A},\mathcal{Q},\mathcal{S})$,  where
\begin{itemize}
	\item $\mathcal{A}= \{\mathbb{A}_{F}\mid  F\in\mathcal{P}_{\omega}(\mathsf{Var})\}$;
	\item $\mathcal{Q}= \{{\bulx_{T}}, {\boxx _{F}}, {\diax _{F}} \mid x\in F\in \mathcal{P}_{\omega}(\mathsf{Var})\text{ and }
	T\in \mathcal{P}_{\omega}(\mathsf{Var}_x)\}$;
	\item $\mathcal{S}=\{\bultf  , \boxtf,\diatf \mid F\in\mathcal{P}_{\omega}(\mathsf{Var})\text{ and } t_F\in\mathsf{Trm}^F\}$ (see Footnote \ref{footnote:substitution}),
\end{itemize}
where every $\mathbb{A}_{F}$ is a Boolean algebra, and the following adjunctions hold for every finite set of free variables $F\in\mathcal{P}_{\omega}(\mathsf{Var})$, every $x\in F$, and every $T\in\mathcal{P}_{\omega}(\mathsf{Var}_x)$:
$$
\xymatrix{
	\mathbb{A}_{T\cup\{x\}}
	\ar@{<-}[rr]|-{\bulx _{T}}
	\ar@/^2pc/[rr]^{\boxx _{T\cup\{x\}}}
	\ar@/_2pc/[rr]_{\diax _{T\cup\{x\}}}
	\ar@{{}{ }{}} @/^1pc/[rr]|{\top}
	\ar@{{}{ }{}} @/_1pc/[rr]|{\top}
	& & \mathbb{A}_{T}
}\quad\quad
\xymatrix{
	\mathbb{A}_{F'}
	\ar@{<-}[rr]|-{\bultf  }
	\ar@/^2pc/[rr]^{\boxtf}
	\ar@/_2pc/[rr]_{\diatf}
	\ar@{{}{ }{}} @/^1pc/[rr]|{\top}
	\ar@{{}{ }{}} @/_1pc/[rr]|{\top}
	& & \mathbb{A}_{F}
}
$$
Notice that in $\bulx_{T}, \boxx _{F}, \diax _{F}$ the subscripts denote the domain of the functions. In practice we will always omit the subscript since it will be obvious from the context.

The adjunctions illustrated above justify the soundness of the following rules (see Section \ref{sec:multicalc} for the definition of the language and the multi-type display calculi D.FO and D.FO$^\ast$):

\begin{center}
	\begin{tabular}{cc}
		\AX$\DIAX X \fCenter_{\!\!F\setminus\{x\}\,\,} Y$
		\doubleLine
		\UI$X \fCenter_{\!\!F\cup\{x\}\,\,} \BULX Y$
		\DisplayProof
		&
		\AX$Y \fCenter_{\!\!F\setminus\{x\}\,\,} \BOXX X$
		\doubleLine
		\UI$\BULX Y \fCenter_{\!\!F\cup\{x\}\,\,} X$
		\DisplayProof \\
		
	\end{tabular}
\end{center}
\noindent where $\DIAX$ (resp.~$\BOXX$) is the structural connective corresponding to existential (resp.~universal) quantification, and $\BULX $ is the structural connective corresponding to the inverse projection map;

\begin{center}
	\begin{tabular}{c}
		\bottomAlignProof
		\AX$\BULTF X \fCenter_{\!\!F'\,\,} Y$
		\doubleLine
		\UI$X\fCenter_{\!\!F\,\,} \BOXTF Y$
		\DisplayProof
	\end{tabular}
	\begin{tabular}{c}
		
		\bottomAlignProof
		\AX$Y \fCenter_{\!\!F'\,\,} \BULTF X$
		\doubleLine
		\UI$\DIATF Y \fCenter_{\!\!F\,\,} X$
		\DisplayProof
	\end{tabular}
\end{center}
\noindent where $\BULTF $ is the structural connective corresponding to the uniform substitution of $F$ with $t_F$, and $\BOXTF$ (resp.~$\DIATF$) is its right (resp.~left) adjoint. 

In order to identify the relevant properties which we need to impose on the multi-type environment outlined above, in what follows, we establish a systematic connection between the standard $\mathcal{L}$-models and a subclass of the structures described above.


\medskip\noindent 

Let $M$ be a model of some fixed but arbitrary  first-order language $\mathcal{L}$.
Let $S,T$ be finite subsets of variables 
and let $M^S$ and $M^T$ denote the sets of functions into $M$ with domains $S$ and $T$ respectively. If \mbox{$f:M^S\to M^T$} is a map, then $\texttt{graph}(f)$ induces a complete Boolean algebra homomorphism\footnote{In general, every relation $R\subseteq X\times Y$ induces maps $\langle R\rangle,[R]:\mathcal{P}(Y)\to\mathcal{P}(X)$ respectively defined by the assignments $\langle R\rangle A= R^{-1}[A]=\{x\in X\mid \exists y( R(x,y)\ \&\ y\in A)\}$ and $[R] A= (R^{-1}[A^c])^c=\{x\in X\mid \forall y(R(x,y)\Rightarrow y\in A)\}$, which have a right adjoint $[R^{-1}]$ and a left adjoint $\langle R^{-1}\rangle$ respectively. When $R=\text{graph}\gbulf $ for some function $f:X\to Y$ then $\langle R\rangle=[R]$ and we denote it $\gbulf $, and abbreviate $\langle R^{-1}\rangle$ as $\gdiaf $ and $[R^{-1}]$ as $\gboxf $.} \mbox{$\gbulf:\mathcal{P}(M^T)\to\mathcal{P}(M^S)$}, between the Boolean algebras $\mathcal{P}(M^T)$ and $\mathcal{P}(M^S)$ of the potential interpretations of $T$\emph{-predicates}\footnote{By $T$-predicate we mean a predicate $P$ such that $\mathsf{FV}(P)=T$.} and the potential interpretations of $S$-predicates, defined as follows: for any $B\subseteq M^T$,
$$\gbulf (B) =f^{-1}[B]= \{ m\in M^S \mid f(m)\in B\}.$$ Hence $\gbulf$ has both a right-adjoint $\gboxf$ and a left-adjoint $\gdiaf$, as in the following picture: $$
\xymatrix{
	2^{M^S}
	\ar@{<-}[rr]|-{\gbulf}
	\ar@/^2pc/[rr]^{\gboxf}
	\ar@/_2pc/[rr]_{\gdiaf}
	\ar@{{}{ }{}} @/^1pc/[rr]|{\top}
	\ar@{{}{ }{}} @/_1pc/[rr]|{\top}
	& & 2^{M^T}
}
$$

which are defined by the following assignments: for  any $A\subseteq M^S$,
\begin{align*}
\gboxf (A) & = \{n\in M^T \mid \forall m\in M^S\,.\, f(m) = n \Rightarrow m\in A\} \\
\gdiaf (A)  & = \{n\in M^T \mid \exists m\in M^S\,.\, f(m) = n \ \& \ m\in A\}.
\end{align*}
Notice in particular that if $A=\{m\}\in J^\infty(\mathcal{P}(M^S))$ for some $m: S\to M$, then $\gdiaf (\{m\}) = \{f(m)\}$.

\bigskip
\noindent 
There are two types of relevant instances of maps $f:M^S\to M^T$. The first is given for $S:=T\cup\{x\}$ for some  $x\notin T$, and $f:=\pi_x:M^{T\cup \{x\}}\to M^T$ is the canonical projection along $T$. In this case, $\gbulpx (A)$ is the \emph{cylindrification} (cf.~\cite{henkin1971cylindric}) of $A$ over the $x$-coordinate, and the adjoint maps $\gboxpx$, $\gdiapx$ are the semantic counterparts of the usual first-order logic quantifiers $\forall x$ and $\exists x$. The second type of instance is given for $f$ arising from a simultaneous substitution $(t/\overline{x})$. In this case, $S:=\mathsf{FV}(t_{\overline{x}})$, $T:=\overline{x}=\{x_1,\ldots,x_n\}$, and $f:=t_{\overline{x}}:M^S\to M^T$ is defined by the assignment: for any $m\in M^S$, 
$$t_{\overline{x}}(m)=(t_{x_1}(m),\ldots,t_{x_n}(m)).$$ 
In this case, the corresponding $\gbultx$ is such that, if $B\subseteq M^T$ is the semantic interpretation of a $T$-predicate $P$, then $\gbultx (B)$ is the semantic interpretation of the $S$-predicate $(t_{\overline{x}}/\overline{x})P$ resulting from applying the simultaneous substitution $(t_{\overline{x}}/\overline{x})$ to $P$. That is, in this case, $\gbultx$ is the semantic interpretation of the substitution from which $t_{\overline{x}}$ arises.

\begin{prop}\label{prop:somefolproperties}
	For every model $M$ for $\mathcal{L}$ and for every $f:M^S\to M^T$ the following inclusions hold:\footnote{The connective $\pdra$ denotes the left residual of join and, in the case of Boolean algebras, $A \pdra B = \neg A \wedge B$.}
	\begin{center}
		\begin{tabular}{c c c}
			$\gboxf (A)\cup \gboxf (B) \ \subseteq \ \gboxf (A\cup B)$ & & $\gdiaf (A\cap B) \ \subseteq \  \gdiaf (A)\cap \gdiaf (B)$ \\
			$\gdiaf (A)\Rightarrow \gboxf (B)\ \subseteq \ \gboxf (A\Rightarrow B)$ & & $\gdiaf (A\pdra B)\ \subseteq \ \gboxf (A)\pdra\gdiaf (B)$\\
		\end{tabular}
	\end{center}
	
\end{prop}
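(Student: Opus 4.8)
The plan is to derive all four inclusions uniformly from the adjoint triple $\gdiaf \dashv \gbulf \dashv \gboxf$ displayed in the text, together with the fact, recorded just above the statement, that $\gbulf = f^{-1}$ is a \emph{complete Boolean homomorphism}. Being such a homomorphism, $\gbulf$ commutes with $\cup$, $\cap$ and $\neg$, and therefore also with the derived operations, i.e.\ with $\Rightarrow$ (recall $A \Rightarrow B = \neg A \cup B$) and with $\pdra$ (recall $A \pdra B = \neg A \cap B$). From the adjunctions I will only need the monotonicity of the three maps, the unit $A \subseteq \gbulf(\gdiaf(A))$ of $\gdiaf \dashv \gbulf$, and the counit $\gbulf(\gboxf(B)) \subseteq B$ of $\gbulf \dashv \gboxf$.

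The two homogeneous inclusions are pure monotonicity. Since $\gboxf$ is a right adjoint it is monotone, so $A, B \subseteq A\cup B$ give $\gboxf(A), \gboxf(B) \subseteq \gboxf(A\cup B)$, whence $\gboxf(A) \cup \gboxf(B) \subseteq \gboxf(A\cup B)$. Dually, $\gdiaf$ is a left adjoint, hence monotone, and $A\cap B \subseteq A, B$ give $\gdiaf(A\cap B) \subseteq \gdiaf(A) \cap \gdiaf(B)$.

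The two mixed inclusions are where the interaction of both adjunctions with the homomorphism property is needed, and this is the only mildly delicate point. For the third, I transpose along the right adjunction $\gbulf \dashv \gboxf$, so it suffices to show $\gbulf(\gdiaf(A) \Rightarrow \gboxf(B)) \subseteq A \Rightarrow B$. Since $\gbulf$ preserves $\Rightarrow$, the left-hand side equals $\gbulf(\gdiaf(A)) \Rightarrow \gbulf(\gboxf(B))$; now the unit $A \subseteq \gbulf(\gdiaf(A))$ and the counit $\gbulf(\gboxf(B)) \subseteq B$, combined with the antitonicity of $\Rightarrow$ in its first argument and its monotonicity in the second, yield exactly $\gbulf(\gdiaf(A)) \Rightarrow \gbulf(\gboxf(B)) \subseteq A \Rightarrow B$. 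The fourth inclusion is symmetric: transposing along the left adjunction $\gdiaf \dashv \gbulf$, it suffices to show $A \pdra B \subseteq \gbulf(\gboxf(A) \pdra \gdiaf(B))$; since $\gbulf$ preserves $\pdra$, the right-hand side is $\gbulf(\gboxf(A)) \pdra \gbulf(\gdiaf(B))$, and the inequalities $\gbulf(\gboxf(A)) \subseteq A$ and $B \subseteq \gbulf(\gdiaf(B))$, with $\pdra$ antitone in its first and monotone in its second argument, give the claim.

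I expect no genuine obstacle: the entire proof is an exercise in adjoint transposition, and the only thing to keep straight is which adjunction to transpose along for each mixed inclusion (the right adjunction $\gbulf \dashv \gboxf$ for the $\gboxf$-headed one, the left adjunction $\gdiaf \dashv \gbulf$ for the $\gdiaf$-headed one) together with the fact that $\gbulf$, being a Boolean homomorphism, commutes with $\Rightarrow$ and $\pdra$. One could instead verify all four inclusions by a direct element-chase from the explicit definitions of $\gboxf$ and $\gdiaf$, but the adjunction argument is shorter and matches the order-theoretic emphasis of the surrounding section.
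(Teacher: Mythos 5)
Your proof is correct, and the two easy inclusions are handled exactly as in the paper (monotonicity of the adjoints). For the two mixed inclusions your route differs from the paper's in one substantive step. Both arguments begin by transposing along the appropriate adjunction, and both use the unit $A \subseteq \gbulf(\gdiaf(A))$ (resp.\ the counit $\gbulf(\gboxf(B)) \subseteq B$). But where you then push $\gbulf$ through the connective — using that $\gbulf = f^{-1}$ is a Boolean homomorphism and hence commutes with $\Rightarrow$ and $\pdra$ — the paper instead residuates $\Rightarrow$ against $\cap$, reducing the claim to $A \cap \gbulf(\gdiaf(A)\Rightarrow\gboxf(B)) \subseteq B$, and then uses only the \emph{meet}-preservation of $\gbulf$ followed by a final appeal to modus ponens, $x \cap (x \Rightarrow y) \subseteq y$ (dually, join-preservation for the $\pdra$ case). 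The payoff of the paper's weaker hypothesis is flagged in its parenthetical remark: the argument goes through verbatim when the implication is defined intuitionistically, i.e.\ for any finite-meet-preserving map with the two adjoints, without assuming that $\gbulf$ preserves $\Rightarrow$ or complementation. Your version is shorter and perfectly adequate in the Boolean powerset setting of the proposition, since $f^{-1}$ really is a complete Boolean homomorphism, but it would not transfer to a Heyting-algebra setting without separately establishing that $\gbulf$ preserves Heyting implication, which does not follow from meet- and join-preservation alone.
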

\begin{proof}
	The inclusions in the first row are a consequence of the monotonicity of $\gboxf $ and $\gdiaf $.
	As to proving the left hand inclusion of the second row, by adjunction and residuation it is enough to show that
	$$A\cap \gbulf (\gdiaf (A)\Rightarrow \gboxf (B))\subseteq B. $$
	Since $A\subseteq \gbulf (\gdiaf (A))$ it is enough to show that
	$$\gbulf (\gdiaf (A))\cap \gbulf (\gdiaf (A)\Rightarrow \gboxf (B))\subseteq B. $$
	Since $\gbulf $ is meet-preserving this is equivalent to
	$$\gbulf (\gdiaf (A)\cap (\gdiaf (A)\Rightarrow \gboxf (B)))\subseteq B  $$
	which, by adjunction, is equivalent to
	$$\gdiaf (A)\cap (\gdiaf (A)\Rightarrow \gboxf (B))\subseteq \gboxf (B) $$ which holds by the definition of implication (also when the implication is defined intuitionistically). The proof of the right hand side inclusion of the second row is dual and uses the join-preservation of $\gbulf $.\end{proof}

Summing up, we are justified in introducing the following

\begin{definition}\label{def:hetmod}
	For every model $M$ for $\mathcal{L}$, the  {\em heterogeneous powerset $\mathcal{L}$-model} associated with $M$ is the tuple $(\mathbb{H}_{M},V)$ such that $\mathbb{H}_{M}=(\mathcal{A}_M,\mathcal{Q}_M,\mathcal{S}_M)$ is defined as follows\footnote{In category-theoretic parlance, this can be seen as a product-preserving functor from the category of terms to the category of Boolean algebras.}:
	\begin{itemize}
		\item $\mathcal{A}_M= \{\mathcal{P}(M^F) \mid  F\in\mathcal{P}_{\omega}(\mathsf{Var})\}$;
		\item $\mathcal{Q}_M=\{\gbulpx, \gboxpx, \gdiapx  , \mid F\in\mathcal{P}_{\omega}(\mathsf{Var}_x)\text{ and }
		\pi_x:M^{F\cup\{x\}}\to M^{F}\}$;
		\item $\mathcal{S}_M=\{\bultf  , \boxtf,\diatf \mid F\in\mathcal{P}_{\omega}(\mathsf{Var})\text{ and } t_F\in\mathsf{Trm}^F\}$,
	\end{itemize}
	and $V$ is such that $V(A)\in\mathcal{P}(M^F)$ for every atomic formula $A$ of $\mathcal{L}$ with $\mathsf{FV}(A)=F$. \textcolor{red}{In Section \ref{sec:mtfol} we formally introduce the multi-type propositional language $\mathcal{L}_{\mathrm{MT}}$ which is naturally interpreted on  heterogeneous models such as those above. Notice that, in the case of heterogeneous powerset $\mathcal{L}$-models, $\gbulpx  ,\gboxpx  ,\gdiapx  $ are the concrete functions arising from $\pi_x$ as described above.} 
\end{definition}

\begin{prop}\label{prop:mainsoundness}
	For every $\mathcal{L}$-model $M$, the heterogeneous $\mathcal{L}$-model associated with $M$ satisfies the following identities:

\begin{center}
\begin{tabular}{@{}r@{}c@{}lr@{}c@{}lrl}
$\gbulf (A\cap B)$                     & $\ =\ $ & $\gbulf (A)\cap\gbulf (B)$                    & $\gbulf (A\cup B)$       & $\ =\ $ &$\gbulf (A)\cup\gbulf (B)$ & (a) \\
$\gbulf (A\Rightarrow B)$         & $=$ & $\gbulf (A)\Rightarrow\gbulf (B)$             & $\gbulf (A\pdra B)$     & $=$ &$\gbulf (A)\pdra\gbulf (B)$    & (b) \\
$\gbulpx \gboxpy (A)$               & $=$ & $\gboxpy \gbulpx (A)$                             & $\gbulpx \gdiapy (A)$ & $=$ &$\gdiapy \gbulpx (A)$            & (c) \\
$\gbultf \gboxpx (A)$   & $=$ & $\gboxpz \gbulzytf(A)$                        & $\gbultf \gdiapy  (A)$  & $=$ & $\gdiapz \gbulzytf (A)$         &(d)  \\
$\gbulpx \gbulpy (A)$               & $=$ & $\gbulpy \gbulpx (A)$                              & $\gbultf \gbulst (A)$     & $=$ & $\gbulstxt (A)$                     & (e) \\
$\gbulsytf \gbulpy (A)$ & $=$ & $\cgr{(\pi_{z_1})}\cdots\cgr{(\pi_{z_k})}\sgr{(t_F)}(A),$ &                                     &        &                                              & (f) \\
\end{tabular}
\end{center}

\noindent where $x\neq y$, $z\notin\mathsf{FV}(t_F)\cup F$ and $\{z_1,\ldots,z_k\}=\mathsf{FV}(s)\setminus\mathsf{FV}(t_F)$ and $f\in\{\pi_x,t_F\}$. 
\end{prop}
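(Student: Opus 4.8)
The plan is to reduce every identity to an elementary property of the inverse-image maps between the powersets $\mathcal{P}(M^G)$. Each occurrence of $\gbulf$, $\gdiaf$, $\gboxf$ (with $f$ a projection $\pi_x$ or a substitution $t_F$) is, respectively, $f^{-1}$ and its left and right adjoints, which I write $\exists_f$ and $\forall_f$. Three facts do all the work. First, $f^{-1}$ is a complete Boolean-algebra homomorphism, so it preserves $\cap$, $\cup$ and complements, hence also $\Rightarrow$ and $\pdra$ (recall $A\Rightarrow B=\neg A\cup B$ and $A\pdra B=\neg A\cap B$ in each $\mathcal{P}(M^G)$). Second, inverse image is contravariantly functorial, $(g\circ f)^{-1}=f^{-1}\circ g^{-1}$; by uniqueness of adjoints this transfers to the images, $\exists_{g\circ f}=\exists_g\circ\exists_f$ and $\forall_{g\circ f}=\forall_g\circ\forall_f$. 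Third, the Beck--Chevalley condition: along any pullback square of functions between such sets, inverse image along one edge intertwines the direct images $\exists$ and $\forall$ along the two parallel edges.

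Identities (a) and (b) are precisely the first fact, applied to the homomorphism $f^{-1}$. The composition identities (e) and (f) combine the second fact with a direct inspection of the composite functions. For the left identity of (e), both $\pi_y\circ\pi_x$ and $\pi_x\circ\pi_y$ (taken with the appropriate domains) coincide with the single projection $M^{T\cup\{x,y\}}\to M^T$ that forgets $x$ and $y$, so their inverse images agree. For the right identity of (e), the composite $s_T\circ t_F$ is exactly the underlying map of the composed substitution $s(t_{\overline{x}}/\overline{x})$, by the standard substitution-composition lemma, and applying $(-)^{-1}$ gives $t_F^{-1}\circ s_T^{-1}=(s_T\circ t_F)^{-1}$, which is the right-hand side. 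For (f), writing the substitution map $(s_y,t_F)$ explicitly as $m\mapsto(t_F(m),s(m))$ shows $\pi_y\circ(s_y,t_F)=t_F\circ\rho$, where $\rho$ forgets exactly the coordinates $\{z_1,\ldots,z_k\}=\mathsf{FV}(s)\setminus\mathsf{FV}(t_F)$; hence the left-hand side equals $\rho^{-1}\circ t_F^{-1}$, and $\rho^{-1}$ is the iterated cylindrification $(\pi_{z_1})\cdots(\pi_{z_k})$, the order being immaterial by the left identity of (e).

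Identities (c) and (d), which pair an inverse image with a $\forall$ or an $\exists$, are the Beck--Chevalley instances. For (c) the relevant square has corners $M^{T\cup\{x,y\}}$, $M^{T\cup\{x\}}$, $M^{T\cup\{y\}}$, $M^T$ with the four evident projections; it is a pullback because $M^{T\cup\{x,y\}}\cong M^{T\cup\{x\}}\times_{M^T}M^{T\cup\{y\}}$, and Beck--Chevalley then yields both $\pi_x^{-1}\forall_{\pi_y}=\forall_{\pi_y}\pi_x^{-1}$ and its $\exists$-companion at once. For (d) the square has bottom edge $t_F\colon M^{F'}\to M^F$ and right edge $\pi_y\colon M^{F\cup\{y\}}\to M^F$, top edge the renamed substitution $(z_y,t_F)\colon M^{F'\cup\{z\}}\to M^{F\cup\{y\}}$, and left edge $\pi_z$; freshness of $z$ (namely $z\notin\mathsf{FV}(t_F)\cup F$) makes $(z_y,t_F)$ act by $(m,d)\mapsto(t_F(m),d)$, which exhibits the square as the pullback of $t_F$ along $\pi_y$, so Beck--Chevalley gives both halves of (d).

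The only real content is the Beck--Chevalley step, which also upgrades the two inclusions of Proposition \ref{prop:somefolproperties} to equalities. I would either cite it as the standard condition for pullbacks in $\mathbf{Set}$, or verify it directly from the explicit formulas for $\forall_f$ and $\exists_f$ recorded above: the existential half is an immediate membership chase, while the universal half is exactly where the pullback property is consumed. The main obstacle is bookkeeping rather than insight — tracking which $M^G$ is the (suppressed) domain and codomain of each subscript-free map, checking in each case that the square at hand is genuinely cartesian, and, in (e) and (f), invoking the substitution lemma so that the composite functions are recognized as the substitutions displayed on the right.
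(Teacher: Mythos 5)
Your argument is correct, and it reaches all six identities by a genuinely different route from the paper. The paper handles (a) and (b) exactly as you do, but for (c)--(f) it observes that the identities are Sahlqvist, computes their ALBA-reducts, and then verifies those reducts on completely join-irreducible elements, i.e.\ it checks what each composite map does to a single assignment $\nomj\colon G\to M$ (a singleton of $\mathcal{P}(M^G)$). You instead verify the identities on arbitrary subsets: contravariant functoriality of inverse image plus the substitution-composition lemma for (e) and (f), and Beck--Chevalley along the two pullback squares you exhibit for (c) and (d). Both arguments ultimately consume the same combinatorial facts --- that the relevant squares of functions commute and are cartesian --- but your packaging delivers the $\forall$- and $\exists$-halves of (c) and (d) in one stroke and is self-contained, whereas the paper's version buys coherence with its overall methodology, in which the analytic inductive shape of these same identities is what generates the structural rules of the calculus. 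Two small caveats on your commentary (neither affects the proof of the proposition itself). First, the aside that Beck--Chevalley ``upgrades the two inclusions of Proposition \ref{prop:somefolproperties} to equalities'' should be dropped: those inclusions concern a single map $f$ (e.g.\ $\gdiaf(A\cap B)\subseteq\gdiaf(A)\cap\gdiaf(B)$), are not instances of any commuting square, and are strict in general. Second, in $\mathbf{Set}$ the pullback property is consumed in \emph{both} halves of Beck--Chevalley --- surjectivity of the comparison map onto the fibre product is precisely what produces the witness in the existential case --- so the claim that only the universal half uses it is off, though harmless.
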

\begin{proof}
	The identities in \emph{(a)} and \emph{(b)} immediately follow from $\gbulpx$ being a Boolean algebra homomorphism. Notice that the remaining identities are all Sahlqvist, so, in what follows, we will show that their correspondents computed with ALBA \cite{CoPa:non-dist} hold in the model.
	
	As to the identities in \emph{(c)}, they both have the same ALBA-reduct, namely $$\gdiapx \gbulpy \mathbf{j}=\gbulpy \gdiapx \nomj,$$ where $\nomj\in J^\infty(\mathcal{P}(M^{F\cup\{x\}}))$, that is, $\nomj$ can be identified with a map $\nomj:F\cup\{x\}\to M$. As discussed above, $\gbulpy $ takes any such $\nomj$ in the appropriate type to the $y$-cylindrification of $\nomj$, and $\gdiapx$ forgets the $x$-coordinate, then it is clear that  these two operations commute when $x\neq y$.
	
	The identities in \emph{(d)} have the following ALBA-reducts respectively:
	$$\gbulzytf \gbulpz \nomj = \gbulpy \gdiatf \nomj\quad\text{and}\quad
	\gbultf \gdiapy \nomj= \gdiapz \gbulzytf \nomj, $$ where $\nomj\in J^\infty(\mathcal{P}(M^{\mathsf{FV}(t_F)}))$, that is, we can write $\nomj:\mathsf{FV}(t_F)\to M$. Notice that $\gbulpz$ takes $\nomj$ to its $z$-cylindrification in $\mathcal{P}(M^{\mathsf{FV}(t_F)\cup\{z\}})$, and $\gdiazytf$ transforms any element $\nomj'$ of this cylindrification  by sending any coordinate different from $z$ to the corresponding $t(\nomj')$ and renaming the element in the $z$-coordinate by declaring it the $y$-coordinate; moreover $\gdiatf$ transforms $\nomj$ into $t(\nomj)_F$, and $\gbulpy $ cylindrifies it by adding the $y$-coordinate. It is clear that these two compositions yield the same outcome. The second identity is argued analogously.
	
	The left-hand identity in \emph{(e)} has the following ALBA-reduct: $$\gdiapx \gdiapy \, \nomj = \gdiapy \gdiapx \, \nomj.$$ Recalling that, with a bit of notational abuse, $\gdiaf (\nomj)=f(\nomj)$, the assumption that $x\neq y$ guarantees that the order in which the projections are applied does not matter.
	
	The right-hand identity in \emph{(e)} has the following ALBA-reduct:
	$$\gdiastxt \, \nomj = \gdiast \gdiatf \, \nomj.$$ This identity expresses that applying $t_F$ to $\nomj$ and then $s_T$ to $t(\nomj)_F$ is the same as applying the compounded substitution map $s(t_{\overline{x}}/\overline{x})_T$ to $\nomj$, which is trivially true.
	
	Finally, the identity \emph{(f)} has the following ALBA-reduct:
$$\gdiatf \gdiapzu \cdots \gdiapzn \, \nomj = \gdiapy \gdiasytf \, \nomj.$$ 
This identity expresses that applying $t_F$ after having forgotten the coordinates $z_1,\ldots,z_n$ is the same as applying $t_F$ in parallel with any substitution $s_y$ with $\{z_1,\ldots,z_n\}=\mathsf{FV}(s)\setminus\mathsf{FV}(t_F)$ and $y\notin F$, and then forgetting the $y$-coordinate. This is again immediately true.
\end{proof}

\begin{remark}
	Notice that while the identities in \emph{(a)} and \emph{(b)} in the proposition above hold for any function $f$, the remaining identities hold true because of the particular functions involved, and hence by choosing different functions, corresponding to different notions of quantifiers and substitutions, these properties might change.
\end{remark}

Proposition \ref{prop:mainsoundness} provides the guidelines for completing the definition of heterogeneous $\mathcal{L}$-algebras. Namely, we are going to use the identities \emph{(a)-(f)} as parts of the following

\begin{definition}\label{def:heterogeneousalgebras}
	For any first-order logic $\mathcal{L}$ over a denumerable set of individual variables $\mathsf{Var}$, a \emph{heterogeneous  $\mathcal{L}$-algebra} is a tuple
	$\mathbb{H}=(\mathcal{A},\mathcal{Q},\mathcal{S})$, such that
\begin{itemize}
	\item $\mathcal{A}= \{\mathbb{A}_{F} \mid F \in \mathcal{P}_{\omega} (\mathsf{Var})\}$;
	\item $\mathcal{Q}= \{\bulxt, \boxxf, \diaxf, \mid x \in F \in \mathcal{P}_{\omega} (\mathsf{Var})\text{ and }
	T\in \mathcal{P}_{\omega}(\mathsf{Var}_x)\}$;
	\item $\mathcal{S}=\{\bultf, \boxtf, \diatf \mid F\in\mathcal{P}_{\omega}(\mathsf{Var})\text{ and } t_F\in\mathsf{Trm}^F\}$,
\end{itemize}
\noindent where for every $F\in\mathcal{P}_{\omega}(\mathsf{Var})$, $\mathbb{A}_{F}$ is a Boolean algebra and $$
	\xymatrix{
		\mathbb{A}_{T\cup\{x\}}
		\ar@{<-<}[rr]|-{\bulx_{T}}
		\ar@/^2pc/[rr]^{\boxx _{T\cup\{x\}}}
		\ar@/_2pc/[rr]_{\diax _{T\cup\{x\}}}
		\ar@{{}{ }{}} @/^1pc/[rr]|{\top}
		\ar@{{}{ }{}} @/_1pc/[rr]|{\top}
		& & \;\mathbb{A}_{T}
	}\quad\quad
	\xymatrix{
		\mathbb{A}_{F'}
		\ar@{<-}[rr]|-{\bultf  }
		\ar@/^2pc/[rr]^{\boxtf}
		\ar@/_2pc/[rr]_{\diatf}
		\ar@{{}{ }{}} @/^1pc/[rr]|{\top}
		\ar@{{}{ }{}} @/_1pc/[rr]|{\top}
		& & \mathbb{A}_{F}
	}
	$$ 
such that $\bulxt$ is an order embedding and  the following equations hold (we omit mentioning the types):

\begin{center}
\begin{tabular}{rclrclr}
$\bulstar (a\aand b)$          & $=$ & $\bulstar (a)\aand\bulstar (b)$                    & $\bulstar (a\lor b)$    & $=$ & $\bulstar (a)\lor\bulstar (b)$    &{\em (a)}  \\
$\bulstar (a\rightarrow b)$ & $=$ & $\bulstar (a)\rightarrow\bulstar (b)$           & $\bulstar (a\pdra b)$ & $=$ & $\bulstar (a)\pdra\bulstar (b)$ & {\em (b)} \\
$\bulx \boxy (a)$                & $=$ &$\boxy \bulx (a)$                                        & $\bulx \diay (a)$       & $=$ & $\diay \bulx (a)$                      &{\em (c)}   \\
$\bultf \boxy (a)$                & $=$ & $\boxz \bulzytf (a)$                                   & $\bultf \diay (a)$        & $=$ & $\diaz \bulzytf (a)$                  & {\em (d)}  \\
$\bulx \buly (a)$                 & $=$ & $\buly \bulx (a)$                                        & $\bultf \bulst (a)$      & $=$ & $\bulstxt (a)$                           & {\em (e)} \\
$\bulsytf \buly (a)$             & $=$ & $ \bulzu \cdots\bulzk \sgr{(t_F)}(a),$ &                                  &        &                                                 &{\em (f)}   \\
\end{tabular}
\end{center}

\noindent where $x \neq y$, $z \notin \mathsf{FV}(t_F) \cup F$, $\bulstar \in \{\bulx, \bultf\}$ and $\{z_1, \ldots, z_k\} = \mathsf{FV}(s)\setminus\mathsf{FV}(t_F)$. A heterogeneous $\mathcal{L}$-algebra is \emph{perfect} if every $\mathbb{A}_F$ is.\footnote{A Boolean algebra is {\em perfect} if it is complete and completely join-generated by its atoms. Equivalently, perfect Boolean algebras are isomorphic to powerset algebras.}
\end{definition}

Clearly, even the class of perfect heterogeneous $\mathcal{L}$-algebras is much larger than the class of those arising from $\mathcal{L}$-models. However, as we will discuss in the next section, there is a way in which $\mathcal{L}$ is sound with respect to this larger class of models.
\section{Multi-type presentation of first-order logic}\label{sec:mtfol}
Let $\mathcal{L}$ be a first-order language over a denumerable set of individual variables $\mathsf{Var}$. The notion  of heterogeneous $\mathcal{L}$-algebras (cf.\ Definition \ref{def:heterogeneousalgebras}) naturally comes with the following \emph{multi-type propositional language} $\mathcal{L}_{\mathrm{MT}}$ canonically interpreted in it. Types in this language bijectively correspond to elements  $F\in \mathcal{P}_\omega(\mathsf{Var})$. The sets $\mathcal{L}_F$ of $F$-formulas, for each such $F$, are defined by simultaneous induction as follows:

\begin{center}
\begin{tabular}{rcl}
$\mathcal{L}_F\ni A^F$ & $::=$ & $R(t_{\overline{x}}) \mid t=s \mid$ \\
                                      &          & $\rule{0pt}{2.30ex}\top^F \mid \bot^F\mid A^F\aand A^F\mid A^F\lor A^F\mid A^F\rightarrow A^F \mid A^F\pdra A^F \mid$ \\
                                      &          & $\rule{0pt}{2.30ex}\boxy A^{F\cup\{y\}} \mid \diay A^{F\cup\{y\}} \mid \bulx A^{F\setminus\{x\}} \mid \bultfp A^{F'}$ \\
\end{tabular}
\end{center}

\noindent where $\mathsf{FV}(t_{F'})=F$, $y\notin F$, $x\in F$, and $R$ is any relation symbol of $\mathcal{L}$, and $t_{F'}: F'\to \mathsf{Trm}$. The symbol $\bultfp$ denotes the simultaneous substitution of $t_{v_m}$ for $v_m$ for every $v_m\in F'$.

\begin{definition}\label{def:hetmodel}
	A \emph{heterogeneous algebraic $\mathcal{L}$-model} is a tuple $(\mathbb{H}, V)$ such that $\mathbb{H}$ is a heterogeneous $\mathcal{L}$-algebra and $V$ maps atomic propositions $R(t_{F'})$ and $t=s$ in $\mathcal{L}_F$  to elements of $\mathbb{A}_F$ so that for any terms $r$, $s$ and $t_{\overline{x}}$ and any relation symbol $P$ (also including equality and $\top$, $\bot$):
\begin{center}
		\begin{tabular}{c c}
		1.$V(P(t_{\overline{x}}))=\bultx V(P(\overline{x}))$ & 2.$V(r=r)=\top$ \\
		\multicolumn{2}{c}{3. $\bulxu \cdots \bulxn V(r=s) \aand \bulyu \cdots \bulym V(P(r,t_{\overline{x}})) \leq \bulzu \cdots \bulzk V(P(s,t_{\overline{x}}))$ } 
	\end{tabular}
\end{center}
where $\{x_1,\ldots,x_n\}=\FV(t_F)\setminus\FV(r,s)$, $\{y_1,\ldots,y_m\}=\FV(s)\setminus\FV(r,t_F)$ and $\{z_1,\ldots,z_k\}=\FV(r)\setminus\FV(s,t_F)$.
\end{definition}

The definition of the interpretation $(\mathbb{H}, V)\models A^F$ of $\mathcal{L}_{\mathrm{MT}}$-formulas  into heterogeneous $\mathcal{L}$-algebras straightforwardly generalizes the definition of the interpretation of propositional languages in algebras of compatible signature.

The discussion of the previous section also motivates the definition of the following translation:

\begin{definition}\label{def:trremsub}
Define the translation $(\cdot)^\tau: \mathcal{L}_{\mathrm{MT}}\to \mathcal{L}$ as follows:
	
	\begin{center}
		\begin{tabular}{r c l c r c l}
			$R(t_{F'})^\tau$ &$ = $& $R(t_{F'})$ & $\quad\quad$ & $t=s^\tau$ & $ = $& $t=s$\\
			$\top^\tau$ &$ = $& $\top$ && $\bot^\tau$ &$ = $& $\bot$\\
			$(A\aand B)^\tau$ &$ = $& $A^\tau \aand B^\tau$ && $(A\lor B)^\tau$ &$ = $& $A^\tau \lor B^\tau$\\
			$(A\rightarrow B)^\tau$ &$ = $& $A^\tau \rightarrow B^\tau$ && $(A\pdra B)^\tau$ &$ = $& $A^\tau \pdra B^\tau$\\
			$(\boxy A)^\tau$ &$ = $& $ \forall yA^\tau$ && $(\diay A)^\tau$ &$ = $& $\exists yA^\tau$\\
			$(\bulx A)^\tau$ &$ = $& $ A^\tau$ && $(\bultfp A)^\tau$ &$ = $& $A^\tau(t_{F'}/\overline{x})$\\
		\end{tabular}
	\end{center}
	
\end{definition} 
\begin{lem}\label{lem:subrules}
Let $(\mathbb{H},V)$ be a heterogeneous algebraic $\mathcal{L}$-model. Let $P$ be a relation symbol of $\mathcal{L}$ or the equality. If 
$$(\sgr{(t_{F_1})} \cdots \sgr{(t_{F_k})} P(t_{\overline{x}}))^\tau = (\sgr{(s_{T_1})} \cdots \sgr{(s_{T_m})} P(s_{\overline{x}}))^\tau$$
 then 
$$\sgr{(t_{F_1})} \cdots \sgr{(t_{F_k})} V(P(t_{\overline{x}}))= \sgr{(s_{T_1})} \cdots \sgr{(s_{T_m})} V(R(s_{\overline{x}})).$$ 
\end{lem}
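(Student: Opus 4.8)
The plan is to reduce both composite expressions to a single substitution operator applied to the common canonical interpretation $V(P(\overline{x}))$, and then to read off the required equality from the fact that the substitution operators in $\mathcal{S}$ are indexed by the substitutions they encode. (Here I read the right-hand side of the conclusion as $V(P(s_{\overline{x}}))$, the symbol $R$ appearing there being a typo for $P$.)

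First I would use clause 1 of Definition \ref{def:hetmodel} to rewrite $V(P(t_{\overline{x}}))=(t_{\overline{x}})\,V(P(\overline{x}))$ and $V(P(s_{\overline{x}}))=(s_{\overline{x}})\,V(P(\overline{x}))$, so that the two sides of the desired identity become
$$(t_{F_1})\cdots(t_{F_k})(t_{\overline{x}})\,V(P(\overline{x}))\qquad\text{and}\qquad(s_{T_1})\cdots(s_{T_m})(s_{\overline{x}})\,V(P(\overline{x})),$$
two finite compositions of substitution operators applied to the same element $V(P(\overline{x}))\in\mathbb{A}_{\overline{x}}$, where $\overline{x}=\mathsf{FV}(P(\overline{x}))$. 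Then, arguing by induction on the length of the composition and applying identity \emph{(e)} of Definition \ref{def:heterogeneousalgebras} at each step — which states precisely that $(t_F)(s_T)=(s(t_{\overline{x}}/\overline{x})_T)$, i.e.\ that composing two substitution operators yields the single operator of the composite substitution — I would collapse each side to a single substitution operator $(w)$ and $(w')$ respectively, where $w,w'\in\mathsf{Trm}^{\overline{x}}$ are the composite substitutions.

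Next I would analyse the hypothesis through the translation $(\cdot)^\tau$. Since $\tau$ is the identity on atomic formulas and sends each operator $(t_{F})$ to the syntactic substitution $(t_{F}/\overline{x})$ (Definition \ref{def:trremsub}), the formula $((t_{F_1})\cdots(t_{F_k})P(t_{\overline{x}}))^\tau$ is $P(\overline{x})$ with the iterated substitutions applied; by the composition law for simultaneous substitutions this equals $P(\overline{x})(w/\overline{x})$, and likewise the right-hand $\tau$-image equals $P(\overline{x})(w'/\overline{x})$. The hypothesis therefore reads $P(\overline{x})(w/\overline{x})=P(\overline{x})(w'/\overline{x})$ as first-order formulas, which forces $w(x)=w'(x)$ for every $x\in\overline{x}$, i.e.\ $w=w'$ as elements of $\mathsf{Trm}^{\overline{x}}$. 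Since the operators in $\mathcal{S}$ are indexed by their domain and their term-tuple, $w=w'$ gives $(w)=(w')$ as maps on $\mathbb{A}_{\overline{x}}$ (the common codomain type $\mathsf{FV}(w)$ being determined by $w$), and applying both to $V(P(\overline{x}))$ yields the conclusion.

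The main obstacle I anticipate is the bookkeeping that matches the algebraic composition produced by repeated use of \emph{(e)} with the syntactic composition of substitutions produced by $\tau$: one must verify that the term-tuple computed by iterating the subscript rule $s(t_{\overline{x}}/\overline{x})_T$ of \emph{(e)} coincides with the composite substitution that $\tau$ assigns, checking in particular that the fresh-variable renaming conventions built into $\tau$ and into the definition of substitution agree, and keeping careful track of the free-variable sets $F_i$, $T_j$ and the intermediate types so that every instance of \emph{(e)} used in the induction is well-typed. Once this alignment is established, the equality of the two $\tau$-images transfers verbatim to the equality of the collapsed operators, and the statement follows.
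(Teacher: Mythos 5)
Your proposal is correct and follows essentially the same route as the paper's own proof: both use clause 1 of Definition \ref{def:hetmodel} to pull out $V(P(\overline{x}))$, collapse the iterated substitution operators via the right-hand identity of \emph{(e)} in Definition \ref{def:heterogeneousalgebras}, and read off from the common $\tau$-image that the two composite substitutions coincide (the paper names this common composite $r_{\overline{x}}$ where you write $w=w'$). Your observation that the $R$ in the conclusion is a typo for $P$ is also correct.
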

\begin{proof}
Let
	\begin{equation}\label{eq:axcond}
	(\sgr{(t_{F_1})} \cdots \sgr{(t_{F_k})} R(t_{\overline{x}}))^\tau = (\sgr{(s_{T_1})} \cdots \sgr{(s_{T_m})} R(s_{\overline{x}}))^\tau.
	\end{equation}
Let us assume that $(\sgr{(t_{F_1})} \cdots \sgr{(t_{F_k})} R(t_{\overline{x}}))^\tau=R(r_{\overline{x}})$, that is 
$$(\sgr{(t_{F_1})} \cdots \sgr{(t_{F_k})} \sgr{(t_{\overline{x}})} R(\overline{x}))^\tau=R(r_{\overline{x}})$$ 
therefore the composition of $\sgr{(t_{F_1})} \cdots \sgr{(t_{F_k})} \sgr{(t_{\overline{x}})}$ is $\sgr{(r_{\overline{x}})}$. By applying the right-hand equality of \emph{(e)} of Definition \ref{def:heterogeneousalgebras} a finite number of times we obtain \begin{equation}\label{eq:half1}
	\sgr{(t_{F_1})} \cdots \sgr{(t_{F_k})} \sgr{(t_{\overline{x}})} V(R(\overline{x}))= \sgr{(r_{\overline{x}})} V(R(\overline{x}))=V(R(r_{\overline{x}}))
	\end{equation}  the second equality holding by item 1 of  Definition \ref{def:hetmodel}.  By assumption \eqref{eq:axcond},
	$$ (\sgr{(s_{T_1})} \cdots \sgr{(s_{T_m})}  R(s_{\overline{x}}))^\tau=R(r_{\overline{x}}).$$ Similar reasoning as above yields
	\begin{equation}\label{eq:half2}
	\sgr{(s_{T_1})} \cdots \sgr{(s_{T_m})} \sgr{(s_{\overline{x}})} V(R(\overline{x}))=V(R(r_{\overline{x}})).
	\end{equation}  By \eqref{eq:half1} and \eqref{eq:half2} we get  $$\sgr{(t_{F_1})} \cdots \sgr{(t_{F_k})} \sgr{(t_{\overline{x}})} V(R(\overline{x}))= 	\sgr{(s_{T_1})} \cdots \sgr{(s_{T_m})} \sgr{(s_{\overline{x}})} V(R(\overline{x})).$$ Since $\sgr{(t_{\overline{x}})} V(R(\overline{x}))=V(R(t_{\overline{x}}))$ and $\sgr{(s_{\overline{x}})} V(R(\overline{x}))=V(R(s_{\overline{x}}))$ by Definition \ref{def:hetmodel}, $$\sgr{(t_{F_1})} \cdots \sgr{(t_{F_k})} V(R(t_{\overline{x}}))= 	\sgr{(s_{T_1})} \cdots \sgr{(s_{T_m})}   V(R(s_{\overline{x}})),$$ which completes the proof. \end{proof}

\begin{lem}\label{lem:eqgen}
	Let $A$ be a formula with free variable $x$. Then 
	$$\bulxu \cdots \bulxn V(r=s) \aand \bulyu \cdots \bulym \sgr{(r_x,t_F)}V(A) \leq \bulzu \cdots \bulzk \sgr{(s_x,t_F)}V(A),$$
	where $\{x_1,\ldots,x_n\}=\FV(t_F)\setminus\FV(r,s)$, $\{y_1,\ldots,y_m\}=\FV(s)\setminus\FV(r,t_F)$ and $\{z_1,\ldots,z_k\}=\FV(r)\setminus\FV(s,t_F)$.
\end{lem}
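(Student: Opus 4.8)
The plan is to argue by induction on the structure of the formula $A$. The base case is that of atomic $A$, where the required inequality is exactly condition 3 of Definition~\ref{def:hetmodel}, stated there for an arbitrary relation symbol $P$ (equality included); if $x$ occurs inside a compound term of the atom, I would first reduce to this shape using the composition identity \emph{(e)} of Definition~\ref{def:heterogeneousalgebras} together with the congruence of equality under function symbols, which is itself a consequence of conditions 2 and 3. A feature that keeps the induction uniform is that the three index sets $\{x_1,\ldots,x_n\}$, $\{y_1,\ldots,y_m\}$, $\{z_1,\ldots,z_k\}$ depend only on $r$, $s$ and $t_F$, not on $A$, so they stay fixed throughout. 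Before starting I would also record the \emph{symmetry of equality} at the algebraic level, $V(r=s)=V(s=r)$, obtained by instantiating condition 3 with the equality predicate and the formula $w=r$ and simplifying via reflexivity $V(r=r)=\top$ (condition 2); this is what will let the induction hypothesis be applied in reverse in the contravariant case.

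For the propositional connectives I would exploit that every $\bulx$ (by items \emph{(a)}--\emph{(b)} of Definition~\ref{def:heterogeneousalgebras}) and every substitution map are Boolean homomorphisms, hence commute with $\aand$, $\lor$, $\rightarrow$ and fix $\top$, $\bot$. The cases $A=\top$ and $A=\bot$ are then immediate. For $A=B\aand C$ and $A=B\lor C$ I would distribute the homomorphisms, use idempotency of the meet against the equality term (for $\aand$) together with distributivity of $\aand$ over $\lor$ (for $\lor$) and monotonicity of the cylindrification prefixes, and apply the induction hypothesis to $B$ and to $C$. The implication case $A=B\rightarrow C$ is the first genuinely delicate one, since the antecedent occupies a contravariant position: I would apply the induction hypothesis directly to $C$, but to $B$ I would apply it \emph{with $r$ and $s$ interchanged} --- which swaps $\{y_1,\ldots,y_m\}$ with $\{z_1,\ldots,z_k\}$ and replaces $V(r=s)$ by $V(s=r)$ --- and then rewrite $V(s=r)$ as $V(r=s)$ using the symmetry recorded above; combining the two instances in the Boolean algebra yields the inequality for $A$.

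For the quantifier cases $A=\forall yB$ (so that $V(A)=\boxy V(B)$) and $A=\exists yB$ (so that $V(A)=\diay V(B)$) I would first rename the bound variable so that $y$ is fresh, i.e.\ $y\notin\FV(r)\cup\FV(s)\cup\FV(t_F)$ and $y\neq x$. Freshness ensures that neither the substitution $\sgr{(r_x,t_F)}$ nor any cylindrification prefix touches the $y$-coordinate, so by the commutation identities \emph{(c)}, \emph{(d)} and \emph{(f)} of Definition~\ref{def:heterogeneousalgebras} the substitution and cylindrifications can be pushed through $\boxy$ and $\diay$. I would then use Frobenius reciprocity $\diay(a\aand\buly b)=(\diay a)\aand b$ and its box-dual --- consequences of the adjunctions $\diay\dashv\buly\dashv\boxy$ together with $\buly$ being a homomorphism --- to draw the equality term, which does not mention $y$, inside the scope of the quantifier, after which the goal reduces to the induction hypothesis applied to $B$ and the monotonicity of $\boxy$, $\diay$.

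The step I expect to be the main obstacle is the quantifier case: one must verify that, once $y$ has been renamed fresh, the commutation identities line up the free-variable side conditions correctly as the substitution map changes type across the quantifier, and that the equality term can indeed be absorbed into the quantifier's scope by Frobenius reciprocity. The contravariant implication case is the secondary difficulty, but once the symmetry $V(r=s)=V(s=r)$ is available it becomes routine.
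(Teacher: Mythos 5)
Your proposal is correct and follows essentially the same route as the paper's proof: induction on the complexity of $A$, with the base case discharged by conditions 1 and 3 of Definition \ref{def:hetmodel}, the propositional cases by the Boolean-homomorphism identities \emph{(a)}--\emph{(b)}, the implication case by applying the induction hypothesis to the antecedent with $r$ and $s$ interchanged, and the quantifier cases by the commutation identities \emph{(c)}--\emph{(f)}. The paper's own write-up is considerably terser; the ingredients you make explicit (the symmetry $V(r=s)=V(s=r)$ needed to reverse the IH in the contravariant position, Frobenius reciprocity for absorbing the equality term under the quantifier, and the term-congruence step when $x$ sits inside a compound argument of an atom) are exactly what its one-line appeals to Definitions \ref{def:hetmodel} and \ref{def:heterogeneousalgebras} leave implicit.
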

\begin{proof}
	We proceed by induction on the complexity of $A$. The base case follows immediately from 1 and 3 of Definition \ref{def:hetmodel}.  Since $\bulx  $ and $\bulsf   $ are Boolean algebra homomorphisms the cases when $A$ is $B\aand C$ and $\bot$  follow easily. The case when $A$ is $B\to C$ uses the fact that $\bulx  $ and $\bulsf   $ are Boolean algebra homomorphisms and utilizes the following reasoning: Assume that $x\aand b_1\leq c_1$ and $x\aand c_2\leq b_2$. We have: 
\begin{align*}
	& x\aand b_1\leq c_1\\
	\Rightarrow & x\aand b_2 \aand (b_2\to b_1)\leq c_1\\
	\Rightarrow & x\aand c_2 \aand (b_2\to b_1)\leq c_1\\
	\Rightarrow &x\aand \aand (b_2\to b_1)\leq c_2\aand c_1.
\end{align*}  

If $A$ is $\buly    B$, $\diay     B$, $\boxy B$ we use the equalities (c)-(f) of Definition \ref{def:heterogeneousalgebras}\footnote{Notice that this proof works also for Heyting algebras.}.
\end{proof}

\begin{cor}\label{lem:eqrules}
	Let $A$ be a formula with free variable $x$. If
	$$\bulxu \cdots \bulxn V(r=s)\leq\bulyu \cdots\bulym \bulrxtf V(A)$$
	 then 
	 $$\bulxu \cdots \bulxn V(r=s) \leq \bulzu \cdots \bulzk \bulsxtf V(A)$$
	where $\{x_1,\ldots,x_n\}=\FV(t_F)\setminus\FV(r,s)$, $\{y_1,\ldots,y_m\}=\FV(s)\setminus\FV(r,t_F)$ and $\{z_1,\ldots,z_k\}=\FV(r)\setminus\FV(s,t_F)$.
\end{cor}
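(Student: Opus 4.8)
The plan is to obtain the statement as an immediate consequence of Lemma \ref{lem:eqgen}, via a one-line order-theoretic manipulation. Write $\alpha := \bulxu \cdots \bulxn V(r=s)$, $\beta := \bulyu \cdots \bulym \bulrxtf V(A)$ and $\gamma := \bulzu \cdots \bulzk \bulsxtf V(A)$. The first thing I would check is that the three index sets $\{x_1,\ldots,x_n\}=\FV(t_F)\setminus\FV(r,s)$, $\{y_1,\ldots,y_m\}=\FV(s)\setminus\FV(r,t_F)$ and $\{z_1,\ldots,z_k\}=\FV(r)\setminus\FV(s,t_F)$ in the corollary coincide verbatim with those in Lemma \ref{lem:eqgen}; they do, so the lemma applies to the present $r$, $s$, $A$ with no relabelling, and it yields precisely
$$\alpha \aand \beta \ \leq\ \gamma.$$

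Next I would invoke the hypothesis of the corollary, which is exactly $\alpha \leq \beta$. Since every $\mathbb{A}_F$ is a Boolean algebra, and in particular a lattice, the relation $\alpha \leq \beta$ is equivalent to $\alpha \aand \beta = \alpha$. Substituting this identity into the inequality furnished by the lemma gives
$$\alpha \ =\ \alpha \aand \beta \ \leq\ \gamma,$$
that is, $\bulxu \cdots \bulxn V(r=s) \leq \bulzu \cdots \bulzk \bulsxtf V(A)$, which is the desired conclusion.

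I do not expect any genuine obstacle here: all the combinatorial work concerning free variables, cylindrifications, and the heterogeneous substitution operators has already been absorbed into the statement and proof of Lemma \ref{lem:eqgen}. The only substantive content of this corollary is the passage from the conjunctive formulation $\alpha \aand \beta \leq \gamma$ to the conditional formulation ``$\alpha \leq \beta$ implies $\alpha \leq \gamma$'', and this rests solely on the lattice fact that a lower bound of $\beta$ is unchanged by meeting with $\beta$. The reason for isolating this reformulation is presumably that the conditional shape $\alpha \leq \beta \Rightarrow \alpha \leq \gamma$ is the one matching the premise/conclusion pattern of the equality rules of the display calculus D.FO, so the corollary is the form directly needed for the soundness argument in Section \ref{sec:folproperties}.
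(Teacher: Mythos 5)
Your proof is correct and matches the paper's, which simply states that the corollary follows ``Immediately from Lemma \ref{lem:eqgen}''; your one-line lattice argument ($\alpha\le\beta$ gives $\alpha=\alpha\aand\beta\le\gamma$) is exactly the intended elaboration of that remark. The verification that the three index sets coincide with those of the lemma is also the right thing to check and holds verbatim.
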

\begin{proof}
	Immediately from Lemma \ref{lem:eqgen}.\end{proof}
\begin{prop}\label{prop:sematicfaith}
	For every $\mathcal{L}$-model $M$ and every $A\in\mathcal{L}_{\mathrm{MT}}$, $$M\models A^\tau\quad\text{iff}\quad (\mathbb{H}_M,V)\models A.$$
\end{prop}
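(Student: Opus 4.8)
The plan is to reduce the biconditional to a single, more informative statement about the \emph{interpretation} $\val{A}$ of a formula $A\in\mathcal{L}_F$ in $(\mathbb{H}_M,V)$, which is an element of $\mathbb{A}_F=\mathcal{P}(M^F)$. For $m\in M^F$ write $M,m\models A^\tau$ to mean that $M,\nu\models A^\tau$ for some (equivalently every) $\nu\colon\mathsf{Var}\to M$ extending $m$; this is well defined because an easy parallel induction on the type-decorated grammar of $\mathcal{L}_{\mathrm{MT}}$ shows $\FV(A^\tau)\subseteq F$ for every $A\in\mathcal{L}_F$. I would then prove, by induction on $A$, that
\[
\val{A}=\{m\in M^F\mid M,m\models A^\tau\}.
\]
Granting this, the proposition is immediate: $(\mathbb{H}_M,V)\models A$ means $\val{A}=\top_{\mathbb{A}_F}=M^F$, i.e.\ $M,m\models A^\tau$ for all $m\in M^F$, which by $\FV(A^\tau)\subseteq F$ is exactly $M\models A^\tau$.

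For the base cases I would use the natural valuation intended in Definition~\ref{def:hetmod}, namely $V(R(\overline{x}))=R^D$ on a tuple of distinct variables and $V(t=s)=\{m\mid t^D(m)=s^D(m)\}$; with this choice one checks that $(\mathbb{H}_M,V)$ is indeed a heterogeneous algebraic $\mathcal{L}$-model in the sense of Definition~\ref{def:hetmodel}. Then item~1 of Definition~\ref{def:hetmodel} gives $\val{R(t_{\overline{x}})}=\gbultx V(R(\overline{x}))=t_{\overline{x}}^{-1}[R^D]=\{m\mid M,m\models R(t_{\overline{x}})\}$, and the equality atom is handled the same way. The propositional cases $\top,\bot,\aand,\lor,\rightarrow,\pdra$ are routine, since $\val{\cdot}$ commutes with the Boolean operations of $\mathcal{P}(M^F)$ while $\models$ commutes with the matching connectives pointwise in $m$.

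The substantive cases are the heterogeneous connectives, and the plan is to read off each Tarski clause from the concrete adjoints of Definition~\ref{def:hetmod}. For $A=\boxy B$ with $B\in\mathcal{L}_{F\cup\{y\}}$, unfolding $\val{A}=\gboxpy\val{B}$ and applying the inductive hypothesis yields
\[
\val{A}=\{m\in M^F\mid \forall m'\in M^{F\cup\{y\}}\,(\pi_y(m')=m\Rightarrow M,m'\models B^\tau)\},
\]
which is precisely $\{m\mid M,m\models\forall y\,B^\tau\}$; the case $A=\diay B$ is dual via $\gdiapy$. For $A=\bulx B$ the interpretation is the cylindrification $\gbulpx\val{B}=\pi_x^{-1}[\val{B}]$, and since $(\bulx B)^\tau=B^\tau$ with $x\notin\FV(B^\tau)$ the $x$-coordinate is inert, so the two sets agree.

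The one clause requiring more than unfolding is substitution, $A=\bultfp B$ with $B\in\mathcal{L}_{F'}$ and $\FV(t_{F'})=F$. Here $\val{A}=\gbultf\val{B}=t_{F'}^{-1}[\val{B}]$, so the inductive hypothesis gives $\val{A}=\{m\in M^F\mid M,t_{F'}(m)\models B^\tau\}$, where $t_{F'}\colon M^F\to M^{F'}$ is the concrete evaluation map. Matching this with $\{m\mid M,m\models B^\tau(t_{F'}/\overline{x})\}=\{m\mid M,m\models(\bultfp B)^\tau\}$ is exactly the content of the first-order substitution lemma for simultaneous substitution, $M,m\models B^\tau(t_{F'}/\overline{x})\iff M,t_{F'}(m)\models B^\tau$. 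I expect this to be the only genuine obstacle: although the substitution lemma is standard, its proof interacts with the capture-avoiding renaming clauses for $(\forall yA)(t_{\overline{x}}/\overline{x})$ and $(\exists yA)(t_{\overline{x}}/\overline{x})$ in the definition of substitution, so careful bookkeeping of free and bound variables is needed to establish the equivalence without variable capture.
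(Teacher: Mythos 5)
Your proposal is correct and follows the same route the paper intends: the paper's proof is just the one-liner ``by straightforward induction on $A$,'' and your argument is the fully worked-out version of that induction, rightly strengthened to the pointwise statement $\val{A}=\{m\in M^F\mid M,m\models A^\tau\}$ so that the quantifier and substitution cases go through. Your identification of the two points the paper leaves implicit --- that $V$ must be the natural valuation induced by $M$ for the base case, and that the $\bultfp$ case is exactly the simultaneous-substitution lemma with its capture-avoidance bookkeeping --- is accurate and is where the real content of the ``straightforward'' induction lies.
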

\begin{proof}
	By straightforward induction on $A$.
\end{proof}

\noindent The identities \emph{(a)-(f)} of Proposition \ref{prop:mainsoundness} have the following syntactic counterparts in the language $\mathcal{L}_{\mathrm{MT}}$:

\begin{center}
\begin{tabular}{rclrclrc}
$\bulstar (A\aand B)$          & $\dashv\vdash$ & $\bulstar A\aand\bulstar B$           & $\bulstar (A\lor B)$   & $\dashv\vdash$ & $\bulstar A\lor\bulstar B$    & (a) \\
$\bulstar (A\rightarrow B)$ & $\dashv\vdash$ & $\bulstar A\rightarrow\bulstar B$ & $\bulstar (A\pdla B)$ & $\dashv\vdash$ & $\bulstar A\pdla\bulstar B$ & (b) \\
$\bulx \boxy A$                  & $\dashv\vdash$ & $\boxy \bulx A$                             & $\bulx \diay A$          & $\dashv\vdash$ & $\diay \bulx A$                    & (c) \\
$\bultf \boxy A$                  & $\dashv\vdash$ & $\boxz \bulzytf A$                         & $\bultf \diay A$          & $\dashv\vdash$ & $\diaz \bulzytf A$                & (d) \\
$\bulx \buly A$                   & $\dashv\vdash$ & $\buly \bulx A$                              & $\bultf \bulst A$         &$\dashv\vdash$  & $\bulstxt A$                         & (e) \\
$\bulsytf \buly A$               & $\dashv\vdash$ & $ \bulzu \cdots\bulzk \bultf  A.$     &                                   &                           &                                             & (f) \\
\end{tabular}
\end{center}
These inequalities are all analytic inductive (cf.\ \cite[Definition 55]{GMPTZ}), and hence give rise to analytic structural rules. We will exploit this fact in the next section when introducing the rules of the calculus.



\section{Multi-type calculus for first-order logic}\label{sec:multicalc}
For any first-order language $\mathcal{L}$ (cf.\ Section \ref{ssec:folprel}) the structural and operational connectives of the proper multi-type display calculi $\mathrm{D.FO}$ and $\mathrm{D.FO}^*$ are the following:
\begin{itemize}
	\item Logical and structural homogeneous connectives for any type $F$:
\end{itemize}
\begin{center}
\begin{tabular}{|r|c|c|c|c|}
\hline
\scriptsize{Structural symbols} & $\rule{0pt}{2.5ex}\AATOP$ & $\ABOT$ & $\RTX$ & $\EST$ \\
\hline
\scriptsize{Operational symbols} & $\top$ & $\bot$ & $R(t_{\overline{x}})$ & $s = t$ \\
\hline
\end{tabular}
\end{center}
\begin{center}
\begin{tabular}{|r|c|c|c|c|c|c|}
\hline
\scriptsize{Structural symbols}  & $\rule{0pt}{2.25ex}\AAND$ & $\AOR$ & $\ADRARR$ & $\ADLARR$ & $\ARARR$ & $\ALARR$ \\
\hline
\scriptsize{Operational symbols} & $\aand$ & $\lor$ & \!$(\adrarr)$ & $(\adlarr)$ & $\ararr$ & $(\leftarrow)$\\
\hline
\end{tabular}
\end{center}
\begin{itemize}
\item Logical and structural heterogeneous for each $x,y\in \Var$:
\end{itemize}
\begin{center}
\begin{tabular}{|r|c|c|c|c|c|c|}
\hline &
\fns $\mathcal{L}_{F\setminus\{x\}} \to \mathcal{L}_{F}$ & \mc{2}{c|}{\fns $\mathcal{L}_{F\cup\{y\}} \to \mathcal{L}_{F}$} & \fns $\mathcal{L}_{F'} \to \mathcal{L}_{F}$ & \mc{2}{c|}{\fns $\mathcal{L}_{F} \to \mathcal{L}_{F'}$}\\
\hline
\scriptsize{Structural symbols} &
$\BULX$ & $\DIAY$ & $\BOXY$  & $\BULTFP$ & $\DIATFP$ & $\BOXTFP$ \\
\hline
\scriptsize{Operational symbols} &
$\bulx$   & \ $\diay$\ \ & $\boxy $ & $\bultfp$     & $(\diatfp) $ & $ (\boxtfp) $ \\
\hline
\end{tabular}
\end{center}

In the table above, some operational symbols above  ($\adrarr$, $\adlarr$, $\leftarrow$, $\diatfp$ and $\boxtfp$) appear within brackets to signify that, unlike their associated structural symbols, they occur only in the language and calculus for $\mathrm{D.FO}^*$.

Below we define the display calculi  $\mathrm{D.FO}^*$ and $\mathrm{D.FO}$. Notice that formally each type, $F$, has its own turnstile denoted by $\vdash_F$. Below to avoid overloading the notation we sometimes omit the subscript, since the type can be directly deduced by the free variables of the structures on the left and right of the turnstile. 

\begin{definition}
	\label{def:FODL}
	The display calculi $\mathrm{D.FO}^*$ and $\mathrm{D.FO}$ consist of the following display postulates, structural rules, and operational
	rules:
	\begin{enumerate}
		\item Identity and Cut:
		
		\begin{center}
			\begin{tabular}{rl}
				
				\AXC{$\phantom{X \fCenter_{\!\!F\,\,} A}$}
				\LL{\fns Id}
				\UI$\RTFP \fCenter R(t_{F'})$
				\DP
				 & 
				
				\AXC{$\phantom{X \fCenter_{\!\!F\,\,} A}$}
				\LL{\fns Id}
				\UI$\EST\fCenter s=t$
				\DP
			\end{tabular}
			\end{center}
		\begin{center}
			\begin{tabular}{c}
				\AX$X \fCenter_{\!\!F\,\,} A$
				\AX$A \fCenter_{\!\!F\,\,} Y$
				\RL{\fns Cut}
				\BI$X \fCenter_{\!\!F\,\,} Y$
				\DP
				
			\end{tabular}
		\end{center}
		
		\item Display postulates for homogeneous connectives: 
		\begin{center}
			\begin{tabular}{rl}
				\AX$X \AAND Y \fCenter_{\!\!F\,\,} Z$
				\doubleLine
				\UI$Y \fCenter_{\!\!F\,\,} X \ARARR Z$
				\DisplayProof
				&
				\AX$Z \fCenter_{\!\!F\,\,} X \AOR Y$
				\doubleLine
				\UI$X \ADRARR Z \fCenter_{\!\!F\,\,} Y$
				\DisplayProof
				\\ &\\
				\AX$X \AAND Y \fCenter_{\!\!F\,\,} Z$
				\doubleLine
				\UI$X \fCenter_{\!\!F\,\,} Z \ALARR Y$
				\DisplayProof
				&
				\AX$Z \fCenter_{\!\!F\,\,} X \AOR Y$
				\doubleLine
				\UI$Z \ADLARR Y \fCenter_{\!\!F\,\,} X$
				\DisplayProof
			\end{tabular}
		\end{center}
		\item Display postulates for quantifiers:
		\begin{center}
			\begin{tabular}{rl}
				\AX$X \fCenter_{\!\!F\cup\{x\}\,\,} \BULX Y$
				\doubleLine
				\UI$\DIAX X \fCenter_{\!\!F\setminus\{x\}\,\,} Y$
				\DisplayProof
				&
				\AX$\BULX Y \fCenter_{\!\!F\cup\{x\}\,\,} X$
				\doubleLine
				\UI$Y \fCenter_{\!\!F\setminus\{x\}\,\,} \BOXX X$
				\DisplayProof \\
			\end{tabular}
		\end{center}
		\item Display postulates for substitutions:
		\begin{center}
			\begin{tabular}{rl}
				\AX$Y \fCenter_{\!\!F'\,\,} \BULTF X$
				\doubleLine
				\UI$\DIATF Y \fCenter_{\!\!F\,\,} X$
				\DP
				&
				\AX$\BULTF X \fCenter_{\!\!F'\,\,} Y$
				\doubleLine
				\UI$X \fCenter_{\!\!F\,\,} \BOXTF Y$
				\DP
			\end{tabular}
		\end{center}
		
		\item Additional adjunction-related rules
		\begin{center}
			\begin{tabular}{rl}
				\AX$ \BULX \DIAX X \fCenter_{\!\!F\,\,} Y$
				\LeftLabel{\fns cadj}
				\UI$X \fCenter_{\!\!F\,\,}Y$
				\DP
				&
				\AX$\BULTFP \DIATFP X \fCenter_{\!\!F\,\,} Y$
				\LeftLabel{\fns sadj}
				\UI$X \fCenter_{\!\!F\,\,} Y$
				\DP
				 \\
			\end{tabular}
		\end{center}
		
		
		\item Necessitation for quantifiers:
		\begin{center}
			\begin{tabular}{rl}
				\AX$\AATOP_{\!F\cup\{x\}} \fCenter_{\!\!F\cup\{x\}\,\,} X$
				\doubleLine
				\UI$\BULX \AATOP_{\!F\setminus\{x\}} \fCenter_{\!\!F\cup\{x\}\,\,} X$
				\DP
				&
				\AX$X \fCenter_{\!\!F\cup\{x\}\,\,} \ABOT_{F\cup\{x\}}$
				\doubleLine
				\UI$X \fCenter_{\!\!F\cup\{x\}\,\,} \BULX \ABOT_{F\setminus\{x\}}$
				\DP
				\\
			\end{tabular}
		\end{center}
		
		\item Necessitation for substitution where $\mathsf{FV}(\overline{t}_{F'})=F$:
		\begin{center}
			\begin{tabular}{rl}
				\AX$\AATOP_{\!F} \fCenter_{\!\!F\,\,} X$
				\doubleLine
				\UI$\BULTFP \AATOP_{\!F'} \fCenter_{\!\!F\,\,} X$
				\DP
				&
				\AX$X \fCenter_{\!\!F\,\,} \ABOT_F$
				\doubleLine
				\UI$X \fCenter_{\!\!F\,\,} \BULTFP \ABOT_{F'}$
				\DP
				\\
			\end{tabular}
		\end{center}

		\item Structural rules for atomic formulas:
		
		\begin{center}
				\begin{tabular}{rl}
				\AX$\sgr{(\tilde{s}_{T_1})} \cdots \sgr{(\tilde{s}_{T_m})} \RSX \fCenter X$
				\UI$\sgr{(\tilde{t}_{F_1})} \cdots \sgr{(\tilde{t}_{F_k})} \RTX \fCenter X$
				\DP
				&
				\AX$\sgr{(\tilde{s}_{T_1})} \cdots \sgr{(\tilde{s}_{T_m})} \ERTRF \fCenter X$
				\UI$\sgr{(\tilde{t}_{F_1})} \cdots \sgr{(\tilde{t}_{F_k})} \ERURT \fCenter X$
				\DisplayProof
				\\
			\end{tabular}
		\end{center}
		
		where the sequents are well-typed and\footnote{We will discuss these axioms in the conclusions.}  $$(\sgr{(t_{F_1})} \cdots \sgr{(t_{F_k})} R(\overline{t}_{\overline{x}}))^\tau=(\sgr{(s_{T_1})} \cdots \sgr{(s_{T_m})} R(\overline{s}_{\overline{x}}))^\tau $$ and
		$$(\sgr{(t_{F_1})} \cdots \sgr{(t_{F_k})} r_1=r_2)^\tau=(\sgr{(s_{T_1})} \cdots \sgr{(s_{T_m})} r_3=r_4)^\tau .$$
		\item Structural rules encoding the behaviour of conjunction and disjunction:
	\begin{center}
			\begin{tabular}{rl}
				\AX$X \fCenter_{\!\!F\,\,} Y$
				\doubleLine
				\LeftLabel{\fns$\AATOP_{L}$}
				\UI$\AATOP \AAND X \fCenter_{\!\!F\,\,} Y$
				\DP
				&
				\AX$Y \fCenter_{\!\!F\,\,} X$
				\doubleLine
				\RightLabel{\fns$\ABOT_{R}$}
				\UI$Y \fCenter_{\!\!F\,\,} X \AOR \ABOT$
				\DP
				 \\ & \\
				\AX$Y \AAND X \fCenter_{\!\!F\,\,} Z$
				\LeftLabel{\fns$E_L$}
				\UI$X \AAND Y \fCenter_{\!\!F\,\,} Z $
				\DP
				&
				\AX$Z \fCenter_{\!\!F\,\,} X \AOR Y$
				\RightLabel{\fns$E_R$}
				\UI$Z \fCenter_{\!\!F\,\,} Y \AOR X$
				\DP
				\\
				&  \\
				\AX$Y \fCenter_{\!\!F\,\,} Z$
				\LeftLabel{\fns$W_L$}
				\UI$X \AAND Y \fCenter_{\!\!F\,\,} Z$
				\DP
				&
				\AX$Z \fCenter_{\!\!F\,\,} Y$
				\RightLabel{\fns$W_R$}
				\UI$Z \fCenter_{\!\!F\,\,} Y \AOR X$
				\DP
				\\ & \\
				\AX$X \AAND X \fCenter_{\!\!F\,\,} Y$
				\LeftLabel{\fns$C_L$}
				\UI$X \fCenter_{\!\!F\,\,} Y $
				\DP
				&
				\AX$Y \fCenter_{\!\!F\,\,} X \AOR X$
				\RightLabel{\fns$C_R$}
				\UI$Y \fCenter_{\!\!F\,\,} X$
				\DP
				\\
				&\\
			
					\AX$X \AAND (Y \AAND Z) \fCenter_{\!\!F\,\,} W$
					\doubleLine
					\LeftLabel{\fns$A_{L}$}
					\UI$(X \AAND Y) \AAND Z \fCenter_{\!\!F\,\,} W $
					\DP
				&
			
					\AX$W \fCenter_{\!\!F\,\,} (Z \AOR Y) \AOR X$
					\doubleLine
					\RightLabel{\fns$A_{R}$}
					\UI$W \fCenter_{\!\!F\,\,} Z \AOR (Y \AOR X)$
					\DP
			\end{tabular}
		\end{center}
	\item Structural rules for equality:
		\begin{center}
		\begin{tabular}{rl}
			\bottomAlignProof
			\AX$\ETT \AAND X \fCenter Y$
			\UI$X \fCenter Y$
			\DP
			&
			\AX$\BULXU \cdots \BULXN \ETS \fCenter \BULYU \cdots \BULYM \BULTYRX X$
			\UI$\BULXU \cdots\BULXN \ETS \fCenter \BULZU \cdots \BULZK \BULSYRX X$
			\DP
			\\
		\end{tabular}
	\end{center}
	where $\{x_1,\ldots,x_n\}=\FV(r_F)\setminus\FV(t,s)$, $\{y_1,\ldots,y_m\}=\FV(s)\setminus\FV(t,r_F)$ and $\{z_1,\ldots,z_k\}=\FV(t)\setminus\FV(s,r_F)$.
		\item Introduction rules for atomic formulas:
			\begin{center}
			\begin{tabular}{rl}
				\AX$\RSX \fCenter X$
				\UI$R(s_{\overline{x}}) \fCenter X$
				\DP
				&
				\AX$\EST \fCenter X$
				\UI$s=t \fCenter X$
				\DP
				\\
			\end{tabular}
		\end{center}

		\item Introduction rules for the propositional connectives:
		\begin{center}
				\begin{tabular}{rl}
					\AXC{\phantom{$\gbot \fCenter \I$}}
					\LL{\fns$\abot_L$}
					\UI$\abot \fCenter \ABOT$
					\DP
					&
					\AX$X \fCenter \ABOT$
					\RL{\fns$\abot_R$}
					\UI$X \fCenter \abot$
					\DP
					\\ &\\
					\AX$\AATOP \fCenter X$
					\LeftLabel{\fns$\aatop_L$}
					\UI$\aatop \fCenter X$
					\DP
					&
					\AXC{\phantom{$\I \fCenter \top$}}
					\RightLabel{\fns$\atop_R$}
					\UI$\AATOP \fCenter \aatop$
					\DP
					\\
					&  \\
					\AX$A \AAND B \fCenter X$
					\LeftLabel{\fns$\aand_L$}
					\UI$A \aand B \fCenter X$
					\DP
					&
					\AX$X \fCenter A$
					\AX$Y \fCenter B$
					\RightLabel{\fns$\aand_R$}
					\BI$X \AAND Y \fCenter A \aand B$
					\DP
				\\ &\\
					\AX$A \fCenter X$
					\AX$B \fCenter Y$
					\LeftLabel{\fns$\lor_L$}
					\BI$A \aor B \fCenter X \AOR Y$
					\DP
					&
					\AX$X \fCenter A \AOR B$
					\RightLabel{\fns$\lor_R$}
					\UI$X \fCenter A \lor B$
					\DP
					\\
					& \\
					\AX$X \fCenter A$
					\AX$B \fCenter Y$
					\LeftLabel{\fns$\to_L$}
					\BI$A \ararr B \fCenter X \ARARR Y$
					\DP
					&
					\AX$X \fCenter A \ARARR B$
					\RightLabel{\fns$\to_R$}
					\UI$X \fCenter A \to B$
					\DP
			\end{tabular}	
		\end{center}
		
		We omit the type since these rules involve only one type. In the presence of the exchange rules $E_L$ and $E_R$, the structural connective $<$ and the corresponding operational connectives $\pdla$ and $\pla$ are redundant.
		\item Grishin rules for classical logic:
		\begin{center}
			\begin{center}
				\begin{tabular}{rl}
					\AX$X \ADRARR (Y \AAND Z) \fCenter  W$
					\LL{\fns$Gri$}
					\UI$(X \ADRARR Y) \AAND Z\fCenter W$
					\DP
					&
					\AX$W\fCenter X \ARARR (Y \AOR Z)$
					\RL{\fns$Gri$}
					\UI$W\fCenter (X \ARARR Y) \AOR Z$
					\DP
					\\
				\end{tabular}
			\end{center}
		\end{center}
		
		\item Introduction rules for the heterogeneous connectives of $\mathrm{D.FO}$:
		\begin{center}
			\begin{tabular}{rl}
				\AX$\DIAX A\fCenter_{\!\!F\,\,} X$
				\LL{\fns$\diax _L$}
				\UI$\diax A \fCenter_{\!\!F\,\,} X$
				\DP
				&
				\AX$X \fCenter_{\!\!F\,\,} A$
				\RL{\fns$\diax _R$}
				\UI$\DIAX X \fCenter_{\!\!F\setminus\{x\}\,\,} \diax  A$
				\DP
				\\
				& \\
				\AX$A \fCenter_{\!\!F\,\,} X$
				\LL{\fns$\boxx _L$}
				\UI$\boxx A \fCenter_{\!\!F\setminus\{x\}\,\,} \BOXX A$
				\DisplayProof
				&
				\AX$X \fCenter_{\!\!F\,\,} \BOXX A$
				\RL{\fns$\boxx _R$}
				\UI$X \fCenter_{\!\!F\,\,} \boxx  A$
				\DP
				\\
			\end{tabular}
		\end{center}

		\begin{center}
			\begin{tabular}{rl}
				\AX$\BULX A \fCenter_{\!\!F\,\,} X$
				\LeftLabel{\fns$\bulx  _L$}
				\UI$\bulx A \fCenter_{\!\!F\,\,} X$
				\DP
				&
				\AX$X \fCenter_{\!\!F\,\,} \BULX  A$
				\RL{\fns$\bulx  _R$}
				\UI$X \fCenter_{\!\!F\,\,} \bulx   A$
				\DP
				\\
			\end{tabular}
		\end{center}	
		\begin{center}
			\begin{tabular}{rl}
				\AX$\BULTFP A \fCenter_{\!\!F\,\,} X$
				\LeftLabel{\fns$\bultfp_L$}
				\UI$\bultfp A \fCenter_{\!\!F\,\,} X$
				\DP
				&
				\AX$X \fCenter_{\!\!F\,\,} \BULTFP A$
				\RL{\fns$\bultfp_R$}
				\UI$X \fCenter_{\!\!F\,\,} \bultfp A$
				\DP
				\\
			\end{tabular}
		\end{center}	
		\item Introduction rules for the heterogeneous connectives of $\mathrm{D.FO}^*$:
		\begin{center}
			\begin{tabular}{rl}
				\AX$\DIATFP A \fCenter_{\!\!F'\,\,} X$
				\LL{\fns$\diatfp_L$}
				\UI$\diatfp A \fCenter_{\!\!F'\,\,} X$
				\DP
				&
				\AX$X \fCenter_{\!\!F\,\,} A$
				\RL{\fns$\diatfp_R$}
				\UI$\DIATFP X \fCenter_{\!\!F'\,\,} \diatfp  A$
				\DP
				\\
				& \\
				\AX$A \fCenter_{\!\!F\,\,} X$
				\LL{\fns$\boxtfp_L$}
				\UI$\boxtfp  A \fCenter_{\!\!F'\,\,} \BOXTFP A$
				\DP
				&
				\AX$X \fCenter_{\!\!F'\,\,} \BOXTFP A$
				\RL{\fns$\boxtfp_R$}
				\UI$X \fCenter_{\!\!F'\,\,} \boxtfp A$
				\DP
				\\
			\end{tabular}
		\end{center}

		\item Monotonicity and order embedding rules:
		\begin{center}
			\begin{tabular}{l r}
				\AX$X \fCenter_{\!\!F\setminus\{x\}\,\,} Y$
				\LL{\fns$\BULX_L$}
				\doubleLine
				\UI$\BULX X \fCenter_{\!\!F\cup\{x\}\,\,} \BULX Y$
				\DP
				&
				\AX$X \fCenter_{\!\!F'\,\,} Y$
				\LL{\fns$\BULTFP_R$}
				\UI$\BULTFP X \fCenter_{\!\!F\,\,} \BULTFP Y$
				\DP
			\end{tabular}
		\end{center}

		\item Interaction between homogeneous and heterogeneous connectives:
		\begin{center}	
				\begin{tabular}{rl}
					\AX$\BULX X \ADRARR \BULX Y \fCenter Z$
					\LL{\fns$(\BULX, \ADRARR)_L$}
					\doubleLine
					\UI$\BULX (X \ADRARR Y) \fCenter Z$
					\DP
					&
					\AX$Z \fCenter \BULX X \ARARR \BULX Y$
					\RL{\fns$(\BULX, \ARARR)_R$}
					\doubleLine
					\UI$Z \fCenter \BULX (X \ARARR Y)$
					\DP
					\\ & \\
					\AX$\BULX X \ADLARR \BULX Y \fCenter Z$
					\LL{\fns$(\BULX, \ADLARR)_L$}
					\doubleLine
					\UI$\BULX (X \ADLARR Y) \fCenter Z$
					\DP
					& 		
					\AX$Z \fCenter \BULX X \ALARR \BULX Y$
					\RL{\fns$(\BULX, \ALARR)_R$}
					\doubleLine
					\UI$Z \fCenter \BULX (X \ADLARR Y)$
					\DP
					\\
					& \\
					\AX$\BULX X \AAND \BULX Y \fCenter Z$
					\LL{\fns$(\BULX, \AAND)_L$}
					\doubleLine
					\UI$\BULX (X \AAND Y) \fCenter Z$
					\DP
					&
					\AX$Z \fCenter \BULX X \AOR \BULX Y$
					\RL{\fns$(\BULX, \AOR)_R$}
					\doubleLine
					\UI$Z \fCenter \BULX (X \AOR Y)$
					\DP
					\\ & \\
					\AX$\BULTF X \AAND \BULTF Y \fCenter Z$
					\LL{\fns$(\BULTF, \AAND)_L$}
					\doubleLine
					\UI$\BULTF (X \AAND Y) \fCenter Z$
					\DP
					&
					\AX$Z \fCenter \BULTF X \AOR \BULTF Y$
					\RL{\fns$(\BULTF, \AOR)_R$}
					\doubleLine
					\UI$Z \fCenter \BULTF (X \AOR Y)$
					\DP
					\\
					& \\
					\AX$\BULTF X \ADRARR \BULTF Y \fCenter Z$
					\LL{\fns$(\BULTF, \ADRARR)_L$}
					\doubleLine
					\UI$\BULTF (X \ADRARR Y) \fCenter Z$
					\DP
					&
					\AX$Z \fCenter \BULTF X \ARARR \BULTF Y$
					\RL{\fns$(\BULTF, \ARARR)_R$}
					\doubleLine
					\UI$Z \fCenter \BULTF (X \ARARR Y)$
					\DP
					\\ & \\
					\AX$\BULTF X \ADLARR \BULTF Y \fCenter Z$
					\LL{\fns$(\BULTF, \ADLARR)_L$}
					\doubleLine
					\UI$\BULTF (X \ADLARR Y) \fCenter Z$
					\DP
					&
					\AX$Z \fCenter \BULTF X \ALARR \BULTF Y$
					\RL{\fns$(\BULTF, \ALARR)_R$}
					\doubleLine
					\UI$Z \fCenter \BULTF (X \ALARR Y)$
					\DP
				\end{tabular}
				\end{center}

		\item Interaction between heterogeneous connectives: In what follows, \mbox{$x\neq y$,} $z\notin\mathsf{FV}(t_{F'})\cup F'$ and $\{z_1,\ldots,z_k\}=\mathsf{FV}(s)\setminus\mathsf{FV}(t_F)$:
	
	\begin{center}	
		
				\begin{tabular}{rl}
					\AX$\BULX \DIAY X \fCenter Y$
					\LL{\fns cq$_L$}
					\doubleLine
					\UI$\DIAY \BULX X \fCenter Y$
					\DisplayProof
					&
					\AX$Y \fCenter \BULX \BOXY X$
					\RL{\fns cq$_R$}
					\doubleLine
					\UI$Y \fCenter \BOXY \BULX X$
					\DP
					\\ & \\
					\AX$\BULTFP \DIAY X \fCenter Y$
					\LL{\fns sq$_L$}
					\doubleLine
					\UI$\DIAZ \BULZYTOFP X \fCenter Y$
					\DP
					 &
					\AX$Y \fCenter \BULTFP \BOXY X$
					\RL{\fns sq$_R$}
					\doubleLine
					\UI$Y \fCenter \BOXZ \BULZYTOFP X$
					\DP
					\\
					 & \\
					\AX$\BULX \BULY X \fCenter Y$
					\LL{\fns cc$_L$}
					\doubleLine
					\UI$\BULY \BULX X \fCenter Y$
					\DP
					 & 
					\AX$Y \fCenter \BULX \BULY X$
					\RL{\fns cc$_R$}
					\doubleLine
					\UI$Y \fCenter \BULY \BULX X$
					\DP
					\\ & \\
					\AX$\BULTF \BULSFP X \fCenter Y$
					\LL{\fns ss$_L$}
					\doubleLine
					\UI$\BULSTXFP X \fCenter Y$
					\DP
					 &
					\AX$Y \fCenter \BULTF \BULSFP X$
					\RL{\fns ss$_R$}
					\doubleLine
					\UI$Y \fCenter \BULSTXFP X$
					\DP
					\\
					&\\
					\AX$\BULSYTOF \BULY X \fCenter Y$
					\LL{\fns sc$_L$}
					\doubleLine
					\UI$\BULZU \cdots\BULZK \BULTF X \fCenter Y$
					\DP
					&
					\AX$Y \fCenter \BULSYTOF \BULZ X$
					\RL{\fns sc$_R$}
					\doubleLine
					\UI$Y \fCenter \BULZU \cdots\BULZK \BULTF X$
					\DP
						\\
					
				\end{tabular}
			
		\end{center}

	\end{enumerate}

\end{definition}


\section{Properties}\label{sec:folproperties}
In the present section, we outline the proofs of soundness, completeness, conservativity, cut elimination and subformula property of the calculus $\mathrm{D.FO}$.

\subsection{Soundness}\label{ssec:sound}
In the present subsection, we outline the  verification of the soundness of the rules of $\mathrm{D.FO}$ w.r.t.~the semantics of heterogeneous $\mathcal{L}$-algebras  (cf.~Definition \ref{def:heterogeneousalgebras}). As done in analogous situations 
\cite{GP:linear, SDM}, the first step consists in interpreting structural symbols as logical symbols according to their (precedent or succedent) position,\footnote{\label{footnote:def precedent succedent pos}For any  sequent $x\vdash y$, we define the signed generation trees $+x$ and $-y$ by labelling the root of the generation tree of $x$ (resp.\ $y$) with the sign $+$ (resp.\ $-$), and then propagating the sign to all nodes according to the polarity of the coordinate of the connective assigned to each node. Positive (resp.\ negative) coordinates propagate the same (resp.\ opposite) sign to the corresponding child node.  Then, a substructure $z$ in $x\vdash y$ is in {\em precedent} (resp.\ {\em succedent}) {\em position} if the sign of its root node as a subtree of $+x$ or $-y$ is  $+$ (resp.\ $-$).}
as indicated in the synoptic tables of Section \ref{sec:multicalc}. This makes it possible to interpret sequents as inequalities, and rules as quasi-inequalities. For example, the rule on the left-hand side below is interpreted as the bi-implication on the right-hand side:

\begin{center}
	\begin{tabular}{rcl}
		\AX$\BULTFP \DIAY X\fCenter Y$
		\doubleLine
		\UI$\DIAZ \BULZYTFP X \fCenter Y$
		\DisplayProof
		&$\quad\rightsquigarrow\quad$&
		$\forall a\forall b[\bultfp  \diay     a\leq b \Leftrightarrow \diaz \bulzytfp a\leq b]$
	\end{tabular}
\end{center}
Notice that the validity of the bi-implication is equivalent to the validity of the analytic inductive (in fact left-primitive) identity 
$$\bultfp \diay a=\diaz \bulzytfp a$$ 
which holds by definition on every heterogeneous $\mathcal{L}$-algebra. The soundness of the remaining unary rules is proven analogously. As to the rules for substitution (cf.\ Definition \ref{def:FODL}.8), the soundness follows from Lemma \ref{lem:subrules}. 
Finally, the soundness of the rules for equality (cf.\ Definition \ref{def:FODL}.10) follows immediately from Corollary \ref{lem:eqrules}.
\medskip

This completes the proof that $\mathrm{D.FO}$ and $\mathrm{D.FO}^\ast$ are sound with respect to the class of heterogeneous algebraic $\mathcal{L}$-models. Since this class properly includes the structures arising from standard $\mathcal{L}$-models, by Proposition \ref{prop:sematicfaith} $\mathrm{D.FO}$ and $\mathrm{D.FO}^\ast$ are sound with respect to standard $\mathcal{L}$-models.

\subsection{Translations and completeness}

The aim of this subsection is to show the following
\begin{thm}\label{theo:completeness}
	If $A\in\mathcal{L}_{\mathrm{MT}}$ and $\vdash_{\mathrm{FO}}A^\tau$ then $\vdash_{\mathrm{D.FO}}A$.
\end{thm}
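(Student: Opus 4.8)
The plan is to simulate the Hilbert-style calculus $\vdash_{\mathrm{FO}}$ inside $\mathrm{D.FO}$ through the translation $(\cdot)^\tau$ of Definition \ref{def:trremsub}. Since $\tau$ is surjective onto $\mathcal{L}$ but far from injective---it collapses the padding connectives $\bulx$ to the identity and interprets each $\bultfp$ as an actual simultaneous substitution---the first task is to understand when two multi-type formulas are forced to be interderivable precisely because they share a $\tau$-image. Concretely, I would first prove a \emph{coherence lemma}: for all $A,B\in\mathcal{L}_{\mathrm{MT}}$, if $A^\tau=B^\tau$ then $A\dashv\vdash_{\mathrm{D.FO}}B$. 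The proof is by induction on formula complexity, using the interaction rules between heterogeneous connectives (Definition \ref{def:FODL}.18) and between homogeneous and heterogeneous connectives (Definition \ref{def:FODL}.17), which are exactly the rule-form of the algebraic equivalences \emph{(a)--(f)}: these let me push every $\bulx$ and every $\bultfp$ inward past the propositional connectives and normalise nested substitutions and paddings, so that any two formulas with the same $\tau$-image reduce to a common normal form.

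Next I would fix, for each first-order formula $\varphi$ with $\mathsf{FV}(\varphi)=F$, a canonical multi-type lift $\varphi^\iota\in\mathcal{L}_F$ satisfying $(\varphi^\iota)^\tau=\varphi$: atoms and propositional connectives lift homomorphically, while $\forall y$ and $\exists y$ lift to $\boxy$ and $\diay$ (inserting a padding $\bulx$ when $y$ is not free, so as to keep the types correct), and no explicit substitution connective is needed in the lift itself. The heart of the argument is then the claim that $\vdash_{\mathrm{FO}}\varphi$ implies $\vdash_{\mathrm{D.FO}}\varphi^\iota$, which I would prove by induction on the length of a Hilbert derivation of $\varphi$.

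For the base cases I would argue axiom by axiom. Propositional tautologies are handled by the observation that, restricted to a single type $F$, $\mathrm{D.FO}$ contains a complete proper display calculus for classical propositional logic (the homogeneous display postulates of Definition \ref{def:FODL}.2, the structural rules of Definition \ref{def:FODL}.9, and the Grishin rules of Definition \ref{def:FODL}.13), so every propositional tautology is derivable. Axioms 2--4 are the box distribution axiom, the $x\notin\mathsf{FV}(B)$ axiom, and universal instantiation; these I would derive using the display postulates for quantifiers and substitutions (Definition \ref{def:FODL}.3--4), the adjunction rules \textsf{cadj}/\textsf{sadj} (Definition \ref{def:FODL}.5), and the necessitation rules (Definition \ref{def:FODL}.6--7), noting that the side condition $x\notin\mathsf{FV}(B)$ is enforced automatically by the typing of $\bulx$ rather than by an external proviso. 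The equality axioms follow from the dedicated equality rules (Definition \ref{def:FODL}.10) together with the identity axioms of Definition \ref{def:FODL}.1, and the universal-closure axiom (item 6 of the Hilbert system) from the box and necessitation rules. For modus ponens I would pad $\varphi^\iota$ and $\psi^\iota$ up to the common type $\mathsf{FV}(\varphi)\cup\mathsf{FV}(\psi)$ via the $\bulx$ connectives---legitimate because these are Boolean homomorphisms by \emph{(a)--(b)}---so that $(\varphi\rightarrow\psi)^\iota$ becomes interderivable with $\varphi^\iota\rightarrow\psi^\iota$, and then apply \textsf{Cut}.

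Finally, applying the inductive claim to $\varphi:=A^\tau$ yields $\vdash_{\mathrm{D.FO}}(A^\tau)^\iota$, and since $((A^\tau)^\iota)^\tau=A^\tau$, the coherence lemma gives $(A^\tau)^\iota\dashv\vdash_{\mathrm{D.FO}}A$, whence $\vdash_{\mathrm{D.FO}}A$. I expect the main obstacle to be the substitution machinery: both the coherence lemma and the universal-instantiation axiom require repeatedly commuting the substitution connectives $\bultfp$ with quantifier connectives and with paddings, and here the bookkeeping of free variables and the matching of types in the rules \textsf{sq}, \textsf{ss}, \textsf{sc} (Definition \ref{def:FODL}.18) is genuinely delicate, since this is exactly where $\tau$ destroys the most structure. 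A secondary difficulty is checking that the lift of each Hilbert axiom lands in the intended type, so that the type-sensitive quantifier rules apply as required.
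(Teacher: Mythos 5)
Your proposal follows essentially the same route as the paper: your coherence lemma is the paper's Corollary \ref{cor:keyprovability}, obtained there by normalising every formula to a canonical substitution-free form (the maps $\sigma$ and $\kappa$ of Lemmas \ref{lem:subperform} and \ref{lem:canonicalform}, playing the role of your normal form and your lift $\varphi^\iota$), and the remainder is the same axiom-by-axiom simulation of the Hilbert system using the display postulates, adjunction, necessitation and equality rules. The proposal is correct.
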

We will proceed as follows: For every $A\in\mathcal{L}$ and $F\in\mathcal{P}(\mathsf{Var})$, we will define, in two steps,  a canonical $\mathcal{L}_F$-formula $\kappa(F,A)$, free of explicit substitutions, and show that $A^F\vdash \kappa(F,A)$ and $\kappa(F,A)\vdash A^F$ are derivable sequents in $\mathrm{D.FO}$ for any $A^F\in \mathcal{L}_F$ such that $(A^F)^\tau=A$. Using this, we will show that for any formulas $A,B\in\mathcal{L}_F$, if $A^\tau=B^\tau$ then $A\vdash B$ and $B\vdash A$ are derivable sequents in $\mathrm{D.FO}$. Thanks to this observation, to show Theorem \ref{theo:completeness} it is enough to show that for every $\mathcal{L}$-formula $A$ which is a theorem of first-order logic and every $F\in\mathcal{P}_\omega(\mathsf{Var})$,  some $A'\in\mathcal{L}_F$ exists such that $\vdash_F A' $ is provable in $\mathrm{D.FO}$.

Let us preliminarily show  the following

\begin{lemma}\label{lem:derrules}
	The following rules are derivable:
	
	\begin{center}	
		\begin{tabular}{rl}
			\AX$\BOXSTAR X \ADRARR \DIASTAR Y \fCenter Z$
			\UI$\DIASTAR (X \ADRARR Y) \fCenter Z$
			\DP
			&
			\AX$Z \fCenter \DIASTAR X \ARARR \BOXSTAR Y$
			\UI$Z \fCenter \BOXSTAR (X \ARARR Y)$
			\DP
			\\ & \\
			\AX$\DIASTAR X \ADLARR \BOXSTAR Y \fCenter Z$
			\UI$\DIASTAR (X \ADLARR Y) \fCenter Z$
			\DisplayProof
			& 		
			\AX$Z \fCenter \BOXSTAR X \ALARR \DIASTAR Y$
			\UI$Z \fCenter \BOXSTAR (X \ALARR Y)$
			\DP
			\\
			&  \\
			\AX$\DIASTAR X \AAND \DIASTAR Y \fCenter Z$
			\UI$\DIASTAR (X \AAND Y) \fCenter Z$
			\DP
			&
			\AX$Z \fCenter \BOXSTAR X \AOR \BOXSTAR Y$
			\UI$Z \fCenter \BOXSTAR (X \AOR Y)$
			\DP\\
		\end{tabular}
	\end{center}
	where $\DIASTAR \in \{\DIASF, \DIAX\}$ and $\BOXSTAR \in\{\BOXSF, \BOXX\}$ (see also Proposition \ref{prop:somefolproperties}).
\end{lemma}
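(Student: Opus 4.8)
The plan is to derive each of the six rules by \emph{pivoting} through the round connective $\BULX$ (resp.\ $\BULSF$): it sits inside the adjunction triple $\DIASTAR \dashv \BULSTAR \dashv \BOXSTAR$ furnished by the display postulates of Definition~\ref{def:FODL}.3--4, and it distributes over every binary connective by the interaction rules of Definition~\ref{def:FODL}.18. The two choices $\DIASTAR \in \{\DIAX, \DIASF\}$ (with the paired $\BOXSTAR \in \{\BOXX, \BOXSF\}$) are handled identically, the only difference being that the quantifier case invokes $\mathrm{cadj}$ together with the rules $(\BULX, -)$, while the substitution case invokes $\mathrm{sadj}$ together with the rules $(\BULTF, -)$. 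Since these are the proof-theoretic incarnations of the semantic inclusions of Proposition~\ref{prop:somefolproperties}, each derivation simply retraces the corresponding inclusion at the structural level.

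Concretely, for the conjunction rule of the left column with $\DIASTAR = \DIAX$ I would argue as follows. From the premise $\DIAX X \AAND \DIAX Y \fCenter Z$ a display postulate for $\AAND$ gives $\DIAX Y \fCenter \DIAX X \ARARR Z$; the adjunction $\DIAX \dashv \BULX$ yields $Y \fCenter \BULX(\DIAX X \ARARR Z)$, and the invertible interaction rule $(\BULX, \ARARR)_R$ rewrites this as $Y \fCenter \BULX \DIAX X \ARARR \BULX Z$. Two applications of display postulates for $\AAND$ expose the factor $\BULX \DIAX X$, producing $\BULX \DIAX X \fCenter \BULX Z \ALARR Y$, whereupon $\mathrm{cadj}$ collapses $\BULX \DIAX X$ to $X$, giving $X \fCenter \BULX Z \ALARR Y$. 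Re-displaying into $X \AAND Y \fCenter \BULX Z$ and applying $\DIAX \dashv \BULX$ once more delivers the conclusion $\DIAX(X \AAND Y) \fCenter Z$. The remaining left-column rules (for $\ADRARR$ and $\ADLARR$) follow the same template, the only changes being which residuation and interaction rules are applied, and the fact that the mixed rules strip a $\DIASTAR$ off one operand and a $\BOXSTAR$ off the other.

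The three right-column rules are obtained as the order-dual of this argument: precedent and succedent positions are swapped, $\AAND, \ARARR, \ALARR$ are replaced by $\AOR, \ADRARR, \ADLARR$, and $\DIASTAR$ is replaced by $\BOXSTAR$. The one point needing care is that the dual of the collapsing step requires a \emph{box-side} analogue of $\mathrm{cadj}$, namely that $Y \fCenter \BULX \BOXX X$ entails $Y \fCenter X$ (and likewise with $\BULSF, \BOXSF$). This is not among the rules of Definition~\ref{def:FODL}.5, but it is derivable from $\mathrm{cadj}$ using only the quantifier display postulates: from $Y \fCenter \BULX \BOXX X$ one passes to $\DIAX Y \fCenter \BOXX X$ (adjunction $\DIAX \dashv \BULX$), then to $\BULX \DIAX Y \fCenter X$ (adjunction $\BULX \dashv \BOXX$), and finally to $Y \fCenter X$ by $\mathrm{cadj}$; the substitution analogue is identical, using $\mathrm{sadj}$ and the postulates of Definition~\ref{def:FODL}.4.

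I expect the main obstacle to be bookkeeping rather than conceptual: at each step one must select the correct display postulate and the correct invertible interaction rule so that exactly one $\BULSTAR \DIASTAR$ (or $\BULSTAR \BOXSTAR$) compound is left in a displayed position where $\mathrm{cadj}$/$\mathrm{sadj}$ (or its derived box-dual) can fire. All side conditions on eigenvariables and on the free variables of the substitutions are inherited verbatim from the rules $(\BULX, -)$ and $(\BULTF, -)$ of Definition~\ref{def:FODL}.18, so no new conditions are generated; in particular, no reflexivity of compound structures is ever needed, since the unit of each adjunction is supplied directly by $\mathrm{cadj}$/$\mathrm{sadj}$.
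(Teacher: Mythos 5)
Your proposal is correct and follows essentially the same route as the paper's proof: display to expose the relevant compound, distribute the round connective $\BULX$ (resp.\ $\BULSF$) over the binary connective via the interaction rules, collapse $\BULX\DIAX$ via $\mathrm{cadj}$ (resp.\ $\mathrm{sadj}$), and re-display. Your explicitly derived box-side counit for the right-column rules is just an inlined form of the same three display steps the paper chains together before applying $\mathrm{cadj}$, so there is no substantive difference.
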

\begin{proof}
	The proof uses the rules of Definition \ref{def:FODL}.14. We only prove one, the others being shown similarly:
	\begin{center}
		\AX$X \fCenter \DIAX Y \ARARR \BOXX Z$
		\UI$\DIAX Y \AAND X \fCenter \BOXX Z$
		\UI$\BULX (\DIAX Y \AAND X) \fCenter Z$
		\UI$\BULX \DIAX Y \AAND \BULX X \fCenter Z$
		\UI$\BULX \DIAX Y \fCenter Z \ALARR \BULX X$
		\UI$Y\fCenter Z \ALARR \BULX X$
		\UI$Y \AAND \BULX X \fCenter Z$
		\UI$\BULX X \fCenter Y \ARARR Z$
		\UI$X \fCenter \BOXX (Y \ARARR Z)$
		\DisplayProof
	\end{center}
	
\end{proof}

Let us now define a function $\sigma:\mathcal{L}_{\mathrm{MT}}\to\mathcal{L}_{\mathrm{MT}}$ such that for any formula $A\in\mathcal{L}_{\mathrm{MT}}$ the formula $\sigma(A)$ is free of explicit substitutions. We define $\sigma$ recursively as follows:

\begin{enumerate}
	\item $\sigma(R(t_{\overline{x}}))=R(t_{\overline{x}})$;
	\item $\sigma(B\bullet C)=\sigma(B)\bullet\sigma(C)$ where $\bullet\in\{\aand,\lor,\to\}$;
	\item $\sigma(\forall x B)=\forall x \sigma(B)$ and $\sigma(\exists x B)=\exists x\sigma(B)$;
	\item $\sigma(\bulx   B)=\bulx  \sigma(B)$;
	\item if $A$ is $\bulsf   B$ then $\sigma(A)$ is defined recursively on the complexity of $B$:
	\begin{itemize}
		\item if $B$ is $R(t_{\overline{y}})$ then $\sigma(A)=R(t(s_{\overline{x}}/\overline{x})_{\overline{y}})$;
		\item if $B$ is $C\bullet D$ then $\sigma(A)=\sigma(\bulsf C)\bullet\bulsf D)$ for $\bullet\in\{\aand,\lor,\to\}$;
		\item if $B$ is $\diay C$ then $\sigma(A)=\diaz \sigma(\bulzysf C)$ where $z\notin\mathsf{FV}(B)$;
		\item if $B$ is $\boxy C$ then $\sigma(A)=\boxz \sigma(\bulzysf C)$ where $z\notin\mathsf{FV}(B)$;
		\item if $B$ is $\buly C$ then $\sigma(A)=\bulzu \cdots\bulzk \sigma(\bulsfy  C)$, where $\{z_1,\ldots,z_k\}=\mathsf{FV}(s_y)\setminus\mathsf{FV}(s_{F\setminus\{y\}})$;
		\item if $B$ is $\bulrt C$ then $\sigma(A)=\sigma(\bulrsxt C)$.	
	\end{itemize}
\end{enumerate}

Notice that, while $\tau$ performs the substitutions and removes $\bulx  $-operators, $\sigma$ simply performs the substitutions. Hence applying $\tau$ after $\sigma$ is the same as applying $\tau$. This motivates the following:
\begin{lemma}\label{lem:substipropbasic}
	If $A\in\mathcal{L}_F$ then $\sigma(A)\in\mathcal{L}_F$. Furthermore $(\sigma(A))^\tau=A^\tau$.
\end{lemma}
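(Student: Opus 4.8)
The plan is to prove both assertions simultaneously by induction on the size of $A$ as an $\mathcal{L}_{\mathrm{MT}}$-formula, where size counts all operational and structural symbols. The only delicate point about the induction is that clause~5 of the definition of $\sigma$ recurses through an auxiliary recursion on the argument $B$ of a leading substitution operator; however, every recursive call occurring there — whether $\sigma(\bulsf C)$, $\sigma(\bulzysf C)$, $\sigma(\bulsfy C)$, or $\sigma(\bulrsxt C)$ — applies $\sigma$ to a formula consisting of a single substitution operator wrapped around a proper subformula of $B$, and hence is strictly smaller in size than $A=\bulsf B$. Thus a single induction on size is well-founded, and I may invoke the inductive hypothesis (both parts) on each such call.

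For clauses~1--4 both assertions are immediate. In clause~1 the formula is unchanged. In clauses~2 and~3 the top connective ($\aand,\lor,\to$, or $\diay,\boxy$) is preserved and $\sigma$ is pushed to the immediate subformulas, which inherit the appropriate types; applying the inductive hypothesis and then unfolding the matching clause of $(\cdot)^\tau$ (Definition~\ref{def:trremsub}), which commutes with exactly these connectives, yields $(\sigma(A))^\tau=A^\tau$. Clause~4 is the same, using that $(\bulx\,\cdot)^\tau$ erases the inverse-projection operator, so $(\bulx\sigma(B))^\tau=(\sigma(B))^\tau=B^\tau=(\bulx B)^\tau$; and since $\bulx$ maps $\mathcal{L}_{F\setminus\{x\}}$ to $\mathcal{L}_F$, type preservation is clear.

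The substance lies in clause~5, $A=\bulsf B$, treated by the inner case distinction on $B$. For the type assertion I track free variables through each sub-case, checking that the fresh variable bound by the quantifier introduced in the $\diay$/$\boxy$ sub-cases, and the index set $\{z_1,\dots,z_k\}$ prescribed in the $\buly$ sub-case, are exactly what is needed for the output to land in $\mathcal{L}_F$; here the side conditions written into $\sigma$ (e.g.\ $z\notin\mathsf{FV}(B)$ and $\{z_1,\dots,z_k\}=\mathsf{FV}(s_y)\setminus\mathsf{FV}(s_{F\setminus\{y\}})$) are precisely the data that make the bookkeeping close. For the translation assertion, the atomic sub-cases (relations, equalities, and the constants $\top,\bot$) hold because $\tau$ fixes atoms while $(\bulsf R(t_{\overline{y}}))^\tau$ is by definition the atom obtained by carrying out the substitution, which is what $\sigma$ returns. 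In each remaining sub-case I unfold the outer $\bulsf$ via the substitution clause of $(\cdot)^\tau$ and then reduce, using the inductive hypothesis on the strictly smaller recursive calls, to a standard fact about capture-avoiding substitution on $\mathcal{L}$: distribution over $\aand,\lor,\to$ (propositional sub-case); the renaming of the bound variable to a fresh $z$ with the substitution pushed inside, i.e.\ $(\exists y C)(s/\overline{x})$ and $(\forall y C)(s/\overline{x})$ (the $\diay,\boxy$ sub-cases); composition of two substitutions into one (the $\bulrt$ sub-case); and the interaction of substitution with a vacuous variable recorded by a cylindrification (the $\buly$ sub-case). These are exactly the $(\cdot)^\tau$-shadows of the identities (c)--(f) of Definition~\ref{def:heterogeneousalgebras}, and each rewrites $(\sigma(A))^\tau$ back to $A^\tau$.

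I expect the main obstacle to be precisely this free-variable bookkeeping in the quantifier, cylindrification, and composition sub-cases of clause~5: one must verify that renaming to a fresh $z$ and passing to the restricted substitution $s_{F\setminus\{y\}}$ (together with the compensating cylindrifications $\bulzu\cdots\bulzk$) neither change the type nor disturb the value of $(\cdot)^\tau$ — that is, that the pushed-in $\mathcal{L}_{\mathrm{MT}}$ substitution performed by $\sigma$ agrees on the nose with the capture-avoiding substitution that $\tau$ would carry out on $A^\tau$ directly. Once this agreement is checked case by case, matching the freshness and restriction conditions in $\sigma$ against the corresponding conditions in the definition of substitution for $\mathcal{L}$, both $\sigma(A)\in\mathcal{L}_F$ and $(\sigma(A))^\tau=A^\tau$ follow.
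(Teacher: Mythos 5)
Your proof is correct and takes the same route as the paper, which disposes of this lemma with a one-line ``straightforward induction on the complexity of $A$.'' Your additional care about why the induction is well-founded in clause~5 --- each nested call $\sigma(\bulzysf C)$, $\sigma(\bulsfy C)$, $\sigma(\bulrsxt C)$ wraps a single substitution operator around a proper subformula of $B$ and is therefore strictly smaller than $\bulsf B$ --- is exactly the point the paper's terse proof glosses over, and the rest of your case analysis matches what that induction requires.
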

\begin{proof}Straightforward induction on the complexity of $A$.
\end{proof}

\begin{lemma}\label{lem:subperform}
	For every $A\in\mathcal{L}_{\mathrm{MT}}$, the sequents $A\vdash \sigma(A)$ and $\sigma(A)\vdash A$ are derivable in $\mathrm{D.FO}$.
\end{lemma}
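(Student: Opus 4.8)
The plan is to argue by induction on the complexity of $A$, following the recursive definition of $\sigma$, and to establish both sequents $A\vdash\sigma(A)$ and $\sigma(A)\vdash A$ at once. The induction hypothesis will always supply \emph{both} sequents for the strictly simpler formulas involved, so that — since every rule used is either invertible (the double-line rules of Definition \ref{def:FODL}.17--18 and the display postulates) or comes in a matching left/right pair — the two target sequents are produced symmetrically.

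The cases in which $\sigma$ acts homomorphically are routine. If $A$ is atomic then $\sigma(A)=A$, and $A\vdash A$ follows from the Id axioms together with the left-introduction rules for atomic formulas of Definition \ref{def:FODL}.11. If $A$ is $B\aand C$, $B\lor C$ or $B\to C$, the sequents follow from the induction hypothesis on $B$ and $C$ via the operational introduction rules $\aand_L,\aand_R,\lor_L,\lor_R,\to_L,\to_R$. If $A$ is $\diay B$, $\boxy B$ or $\bulx B$, one introduces the structural counterpart, applies monotonicity, and re-introduces the operational connective, using $\diax_L,\diax_R,\boxx_L,\boxx_R$ for the quantifiers and $\BULX_L,\bulx_L,\bulx_R$ for the inverse-projection connective; each such case is a two- or three-line derivation.

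The heart of the argument is the case $A=\bulsf B$, which I would treat by a subsidiary induction on $B$ mirroring clause 5 of the definition of $\sigma$. For $B=C\bullet D$ I would distribute the structural substitution over the connective using the interaction rules $(\BULTF,\cdot)$ of Definition \ref{def:FODL}.17 and then apply the induction hypothesis to $\bulsf C$ and $\bulsf D$. For $B=\diay C$ and $B=\boxy C$ I would use the rules $\mathsf{sq}_L,\mathsf{sq}_R$ of Definition \ref{def:FODL}.18 — the syntactic counterpart of identity \emph{(d)} — to commute the substitution past the quantifier while renaming the bound variable to a fresh $z$, reducing to the induction hypothesis for $\bulzysf C$. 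For $B=\buly C$ I would invoke $\mathsf{sc}_L,\mathsf{sc}_R$ (the counterpart of \emph{(f)}), and for $B=\bulrt C$ the rules $\mathsf{ss}_L,\mathsf{ss}_R$ (the counterpart of \emph{(e)}), which collapse the composed substitution into a single one; in every subcase the formula reached is of strictly smaller complexity than $A$, so the induction hypothesis applies.

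The base of this inner recursion, $B=R(t_{\overline{y}})$, is where the real content lies and is the step I expect to be the main obstacle. Here $\sigma(\bulsf R(t_{\overline{y}}))=R(t(s_{\overline{x}}/\overline{x})_{\overline{y}})$, and the two sequents must be produced from the structural rules for atomic formulas of Definition \ref{def:FODL}.8. Starting from the relevant instance of Id, I would use those rules to rewrite the composed structural substitution applied to the atomic structure into the single substitution whose $\tau$-translation coincides with it — this being exactly the well-typedness side condition attached to those rules, whose semantic legitimacy is Lemma \ref{lem:subrules} — and then apply the left-introduction rule for the relation symbol; since the side condition is an equality it may be read in either direction, giving both sequents. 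The delicate part throughout is the free-variable bookkeeping: one must check that the decompositions prescribed by $\sigma$ (the choices of fresh $z$, and the index sets such as $\{z_1,\dots,z_k\}=\mathsf{FV}(s)\setminus\mathsf{FV}(t_F)$) satisfy the side conditions $z\notin\mathsf{FV}(\cdots)$ required for the interaction rules to be applicable, so that the structural manipulations line up with the term-level substitutions; granting this, the remainder is routine rule-chasing.
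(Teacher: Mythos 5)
Your proposal matches the paper's proof in both structure and detail: an outer induction on $A$ handling the homomorphic cases via the introduction and monotonicity rules, and for $A=\bulsf B$ an inner induction on $B$ that dispatches the connective cases with the interaction rules of Definition \ref{def:FODL}.17, the quantifier and substitution cases with $\mathsf{sq}$, $\mathsf{sc}$, $\mathsf{ss}$, and the atomic base case with the structural rules for atomic formulas of Definition \ref{def:FODL}.8 read in both directions. This is essentially the same argument the paper gives, so no further comparison is needed.
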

\begin{proof}
	By induction on the complexity of $A$. If $A$ is $R(t_{\overline{x}})$, then $\RTX \vdash R(t_{\overline{x}})$ is an axiom and we have
	\begin{center}
		\AX$\RTX \fCenter R(t_{\overline{x}})$
		\UI$R(t_{\overline{x}})\fCenter R(t_{\overline{x}})$
		\DisplayProof
	\end{center} as required.
	
	\noindent	If $A$ is $B\aand C$ then by induction hypothesis we have:
	\begin{center}
		\AX$B\fCenter\sigma(B)$
		\AX$C\fCenter\sigma(C)$
		\BI$B \AAND C\fCenter\sigma(B)\aand\sigma(C)$
		\UI$B \aand C\fCenter\sigma(B)\aand\sigma(C)$
		\DP
	\end{center}
	and
	\begin{center}
		\AX$\sigma(B) \fCenter B$
		\AX$\sigma(C) \fCenter C$
		\BI$\sigma(B) \AAND \sigma(C) \fCenter B \aand C$
		\UI$\sigma(B) \aand \sigma(C) \fCenter B \aand C$
		\DP
	\end{center}
	which yields the result since $\sigma(B\aand C)=\sigma(B)\aand\sigma(C)$ by definition.

	If $A$ is $B\lor C$ or $B\to C$ we argue similarly.

	If $A$ is $\bulx   B$ then:
	\begin{center}
		\AX$B\fCenter\sigma(B)$
		\UI$\BULX B\fCenter\BULX \sigma(B)$
		\UI$\bulx B\fCenter\BULX \sigma(B)$
		\UI$\bulx B\fCenter\bulx \sigma(B)$
		\DP
	\end{center}
	which yields the result since $\sigma(\bulx  B)=\bulx  \sigma(B)$ (and similarly for the other direction).

	If $A$ is $\boxx B$ then:
	\begin{center}
		\AX$B \fCenter \sigma(B)$
		\UI$\boxx B \fCenter \BOXX \sigma(B)$
		\UI$\boxx B\fCenter\boxx \sigma(B)$
		\DP
	\end{center}
	which yields the result since $\sigma(\boxx B)=\boxx \sigma(B)$,
	and similarly for the other direction and for $\diax B$.

	If $A$ is $\bulsf B$, notice that
	\begin{center}
		\begin{tabular}{lr}
			\AX$\BULSF B \fCenter \sigma(\bulsf B)$
			\UI$\bulsf B \fCenter \sigma(\bulsf B)$
			\DP
			&
			\AX$\sigma(\bulsf B) \fCenter \BULSF B$
			\UI$\sigma(\bulsf B) \fCenter \bulsf B$
			\DP
		\end{tabular}
	\end{center}
	and hence it is enough to show that $\BULSF B\vdash\sigma(\bulsf B)$ and $\sigma(\bulsf B)\vdash\BULSF B$ are derivable. We proceed by induction on the complexity of $B$.
	
	If $B$ is $R(\overline{t}_{\overline{y}})$ then by definition $\sigma(\bulsf R(t_{\overline{y}}))=R(t(s_{\overline{x}}/\overline{x})_{\overline{y}})$ and therefore we have: 
	\begin{center}
		\begin{tabular}{c c c}
			\AX$\hat{R}(t(s_{\overline{x}}/\overline{x})_{\overline{y}}) \fCenter R(t(s_{\overline{x}}/\overline{x})_{\overline{y}})$
			\UI$\BULSF \hat{R}(t_{\overline{y}}) \fCenter R(t(s_{\overline{x}}/\overline{x})_{\overline{y}})$
			\UI$\hat{R}(t_{\overline{y}}) \fCenter \BOXSF R(t(s_{\overline{x}}/\overline{x})_{\overline{y}})$
			\UI$R(t_{\overline{y}}) \fCenter \BOXSF R(t(s_{\overline{x}}/\overline{x})_{\overline{y}})$
			\UI$\BULSF R(\overline{t}_{\overline{x}})\fCenter R(t(s_{\overline{x}}/\overline{x})_{\overline{y}})$
			\DP
			&
			\AX$\hat{R}(t_{\overline{y}}) \fCenter R(t_{\overline{y}})$
			\UI$\BULSF \hat{R}(t_{\overline{y}})\fCenter \BULSF R(t_{\overline{y}})$
			\UI$\hat{R}(t(s_{\overline{x}}/\overline{x})_{\overline{y}}) \fCenter\BULSF R(t_{\overline{y}})$
			\UI$R(t(s_{\overline{x}}/\overline{x})_{\overline{y}})\fCenter\BULSF  R(t_{\overline{y}})$
			\DP
			
		\end{tabular}	
	\end{center}
	which yield the desired result.

	If $B$ is $C\aand D$ then:
	\begin{center}
		\AX$\BULSF C \fCenter \sigma(\bulsf C)$
		\AX$\BULSF D \fCenter \sigma(\bulsf D)$
		\BI$\BULSF C \AAND \BULSF D \fCenter \sigma(\bulsf C)\aand\sigma(\bulsf D)$
		\UI$\BULSF (C \AAND D) \fCenter \sigma(\bulsf C)\aand\bulsf D)$
		\UI$(C \AAND D) \fCenter \BOXSF \sigma(\bulsf C)\aand\sigma(\bulsf D)$
		\UI$(C\aand D) \fCenter \BOXSF \sigma(\{(s_x/x)\}C)\aand\sigma(\bulsf D)$
		\UI$\BULSF (C\aand D) \fCenter \sigma(\bulsf C)\aand\sigma(\bulsf D)$
		\DP
	\end{center}
	and
	\begin{center}
		\AX$\sigma(\bulsf C) \fCenter \BULSF C$
		\UI$\DIASF \sigma(\bulsf C) \fCenter C$
		\AX$\sigma(\bulsf C) \fCenter \BULSF C$
		\UI$\DIASF \sigma(\bulsf D)\fCenter D$
		\BI$\DIASF \sigma(\bulsf C) \AAND \DIASF \sigma(\bulsf D)\fCenter C\aand D$
		\UI$\BULSF (\DIASF \sigma(\bulsf C) \AAND \DIASF \sigma(\bulsf D)) \fCenter \BULSF (C\aand D)$
		\UI$\BULSF \DIASF \sigma(\bulsf C) \AAND \BULSF \DIASF \sigma(\bulsf D) \fCenter \BULSF (C\aand D)$
		\UI$\BULSF \DIASF \sigma(\bulsf C) \fCenter \BULSF \DIASF \sigma(\bulsf D) \ARARR \BULSF (C\aand D)$
		\LL{\fns sadj}
		\UI$\sigma(\bulsf C) \fCenter \BULSF \DIASF \sigma(\bulsf D) \ARARR \BULSF (C\aand D)$
		\UI$\BULSF \DIASF \sigma(\bulsf D)\fCenter \sigma(\bulsf C) \ARARR \BULSF (C\aand D)$
		\LL{\fns sadj}
		\UI$\sigma(\bulsf D)\fCenter \sigma(\bulsf C) \ARARR \BULSF (C\aand D)$
		\UI$\sigma(\bulsf C) \AAND \sigma(\bulsf D)\fCenter \BULSF (C\aand D)$
		\UI$\sigma(\bulsf C) \aand \sigma(\bulsf D)\fCenter \BULSF (C\aand D)$
		\DP
	\end{center}
	and if $B$ is $C\lor D$ then the proof is analogous.
	
	If $B$ is $C\to D$ then:
	\begin{center}
		\AX$\BULSF C\fCenter\sigma(\bulsf C)$
		\AX$\sigma(\bulsf D)\fCenter \BULSF D$
		\BI$\sigma(\bulsf C)\to\sigma(\bulsf D)\fCenter\BULSF C \ARARR \BULSF D$
		\UI$\sigma(\bulsf C)\to\sigma(\bulsf D)\fCenter\BULSF (C \ARARR D)$
		\UI$\DIASF (\sigma(\bulsf C)\to\sigma(\bulsf D))\fCenter C \ARARR D$
		\UI$\DIASF (\sigma(\bulsf C)\to\sigma(\bulsf D))\fCenter C \ararr D$
		\UI$\sigma(\bulsf C)\to\sigma(\bulsf D)\fCenter\BULSF (C \ararr D)$
		\DP
	\end{center}
	and
	\begin{center}
		\AX$\sigma(\bulsf C) \fCenter \BULSF C$
		\UI$\DIASF \sigma(\bulsf C) \fCenter C$
		\AX$\BULSF D \fCenter \sigma(\bulsf D)$
		\UI$D \fCenter \BOXSF \sigma(\bulsf D)$
		\BI$C\to D \fCenter \DIASF \sigma(\bulsf C) \ARARR \BOXSF \sigma(\bulsf D)$
		\UI$\BULSF (C\to D) \fCenter \BULSF (\DIASF \sigma(\bulsf C) \ARARR \BOXSF \sigma(\bulsf D))$
		\UI$\BULSF (C\to D) \fCenter \BULSF \DIASF \sigma(\bulsf C) \ARARR \BULSF \BOXSF \sigma(\bulsf D)$
		\UI$\BULSF (C\to D) \AAND \BULSF \DIASF \sigma(\bulsf C)\fCenter \BULSF \BOXSF \sigma(\bulsf D)$
		\RightLabel{\fns sadj}
		\UI$\BULSF (C\to D) \AAND \BULSF \DIASF\sigma(\bulsf C) \fCenter \sigma(\bulsf D)$
		\UI$\BULSF \DIASF \sigma(\bulsf C)\fCenter \sigma(\bulsf D) \ALARR \BULSF (C\to D)$
		\LL{\fns sadj}
		\UI$\sigma(\bulsf C) \fCenter \sigma(\bulsf D) \ALARR \BULSF (C\to D)$
		\UI$\sigma(\bulsf C) \AAND \BULSF (C\to D) \fCenter \sigma(\bulsf D)$
		\UI$\BULSF (C\to D)\fCenter\sigma(\bulsf C) \ARARR \sigma(\bulsf D)$
		\DP
	\end{center}
	
	If $B$ is $\diay C$:
	\begin{center}
		\AX$\BULZYSF C \fCenter \sigma(\bulzysf C)$
		\UI$\DIAZ \BULZYSF C \fCenter \diaz \sigma(\bulzysf C)$
		\LeftLabel{\fns sq$_L$}
		\UI$\BULSF \DIAY C \fCenter \diaz \sigma(\bulzysf C)$
		\UI$\DIAY C \fCenter \BOXSF \diaz \sigma(\bulzysf C)$
		\UI$\diay C \fCenter \BOXSF \diaz \sigma(\bulzysf C)$
		\UI$\BULSF \diay C \fCenter \diaz \sigma(\bulzysf C)$
		\DP
	\end{center}
	and
	\begin{center}
		\AX$\sigma(\bulzysf C) \fCenter\BULZYSF C$
		\UI$\DIAZYSOF \sigma(\bulzysf C) \fCenter C$
		\UI$\DIAY \DIAZYSOF \sigma(\bulzysf C) \fCenter \diay C$
		\UI$\DIAZYSOF \sigma(\bulzysf C) \fCenter \BULY \diay C$
		\UI$\sigma(\bulzysf C) \fCenter \BULZYSF \BULY \diay C$
		\RL{\fns sc$_R$}
		\UI$\sigma(\bulzysf C) \fCenter\BULZ \BULSF \diay C$
		\UI$\DIAZ \sigma(\bulzysf C) \fCenter\BULSF \diay C$
		\UI$\diaz \sigma(\bulzysf C) \fCenter\BULSF \diay C$
		\DP
	\end{center}
	If $B$ is $\buly C$ then:
	\begin{center}
		\AX$\BULSFY C \fCenter \sigma(\bulsfy C)$
		\UI$\BULZU \cdots \BULZK \BULSFY  C \fCenter \BULZU \cdots \BULZK \sigma(\bulsfy C)$
		\LeftLabel{\fns sc$_L$}
		\UI$\BULSF \BULY C \fCenter \BULZU \cdots \BULZK \sigma(\bulsfy C)$
		\UI$\BULSF \BULY C \fCenter \bulzu \cdots \bulzk \sigma(\bulsfy C)$
		\UI$\BULY C \fCenter \BOXSF \bulzu \cdots \bulzk \sigma(\bulsfy C)$
		\UI$\buly C \fCenter \BOXSF \bulzu \cdots \bulzk \sigma(\bulsfy C)$
		\UI$\BULSF \buly C \fCenter\bulzu \cdots \bulzk \sigma(\bulsfy C)$
		\DP
	\end{center}
	the other direction being symmetrical. Finally if $B$ is $\bulrt C$ then:
	\begin{center}
		\AX$\BULRSXT C \fCenter \sigma(\bulrsxt C)$
		\LeftLabel{\fns ss$_L$}
		\UI$\BULSF \BULRT C \fCenter \sigma(\bulrsxt C)$
		\UI$\BULRT C \fCenter \BOXSF \sigma(\bulrsxt C)$
		\UI$\bulrt C \fCenter \BOXSF \sigma(\bulrsxt C)$
		\UI$\BULSF \bulrt C \fCenter \sigma(\bulrsxt C)$
		\DP
	\end{center}
	
	This concludes the proof.
\end{proof}

Let us define a translation $\kappa:\mathcal{P}_\omega(\mathsf{Var})\times\mathcal{L}\to\mathcal{L}_{\mathrm{MT}}$ such that $\kappa(F,A)\in\mathcal{L}_{F\cup\mathsf{FV}(A)}$, by recursion on $A$ as follows:

\begin{itemize}
	\item $\kappa(F,R(t_{\overline{x}}))=\bulzu \cdots\bulzk  R(t_{\overline{x}})$ where $\{z_1,\ldots,z_k\}=F\setminus\mathsf{FV}(t_{\overline{x}})$;
	\item $\kappa(F,A\,\bullet\, B)=\kappa(F\cup\FV(B),A)\,\bullet\,\kappa(F\cup\FV(A),B))$ where $\bullet\in\{\aand,\lor,\to\}$;
	\item $\kappa(F,\exists xA)=\bulx  \diax \kappa(F,A)$ if $x\in F$ and $\kappa(F,\exists xA)=\diax \kappa(F\cup\{x\},A)$ if $x\notin F$ (and similarly for $\forall x A$);
\end{itemize}


\begin{lemma}\label{lem:canonicalform}
	For every formula $A\in\mathcal{L}_F$ that does not contain explicit substitutions the sequents $\kappa(F,A^\tau)\vdash A$ and $A\vdash\kappa(F,A^\tau)$ are derivable in $\mathrm{D.FO}$.
\end{lemma}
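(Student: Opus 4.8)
The plan is to establish both sequents simultaneously by induction on the complexity of the substitution-free formula $A\in\mathcal{L}_F$. I would first record a well-typedness observation: for every $D\in\mathcal{L}_F$ one has $\mathsf{FV}(D^\tau)\subseteq F$ (a routine induction on $D$, using that $(\cdot)^\tau$ erases the connectives $\bulx$ and rewrites $\diay,\boxy$ as $\exists y,\forall y$). Hence $\kappa(F,A^\tau)\in\mathcal{L}_{F\cup\mathsf{FV}(A^\tau)}=\mathcal{L}_F$, so that both $\kappa(F,A^\tau)\vdash A$ and $A\vdash\kappa(F,A^\tau)$ are well-typed sequents of $\mathcal{L}_F$. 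I would also note that the cylindrifications occurring in the atomic clause of $\kappa$ pairwise commute up to derivable equivalence (equivalence (e) in the list of $\dashv\vdash$-equivalences at the end of Section~\ref{sec:mtfol}), so that $\kappa$ is determined up to $\dashv\vdash$ regardless of the order chosen for them.

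The main technical ingredient is an auxiliary claim: for every first-order formula $A$ and every variable $x\notin F\cup\mathsf{FV}(A)$, the sequents $\bulx\kappa(F,A)\vdash\kappa(F\cup\{x\},A)$ and $\kappa(F\cup\{x\},A)\vdash\bulx\kappa(F,A)$ are derivable in $\mathrm{D.FO}$; intuitively, enlarging the ambient type by a fresh variable is the same, up to derivable equivalence, as prefixing one cylindrification. I would prove this by induction on $A$. In the atomic case both formulas are the same string of cylindrifications applied to the same atom, so they agree up to the reordering equivalence (e). The connective case $A=B\bullet C$ uses the distribution equivalences (a)--(b), namely $\bulx(B'\bullet C')\dashv\vdash\bulx B'\bullet\bulx C'$, together with the induction hypothesis applied to $B$ and $C$ in their respective types. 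The quantifier case $A=\exists yB$ (resp.\ $\forall yB$) uses the commutation equivalence (c), $\bulx\diay Z\dashv\vdash\diay\bulx Z$, and (e), splitting according to whether $y\in F$; the degenerate sub-case $y=x$ is an outright syntactic identity between the two sides.

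Granting the claim, the main induction runs by cases on the principal connective of $A$. For atoms (relational or equality) $F=\mathsf{FV}(A)$ and $\kappa(F,A^\tau)=A$ (the prospective cylindrifications are vacuous), so both sequents are instances of identity; the constants $\top,\bot$ are handled similarly using that cylindrification preserves them. For $A=B\bullet C$ with $\bullet\in\{\aand,\lor,\to\}$, the inclusions $\mathsf{FV}(B^\tau),\mathsf{FV}(C^\tau)\subseteq F$ give $\kappa(F,A^\tau)=\kappa(F,B^\tau)\bullet\kappa(F,C^\tau)$, and the two sequents are assembled from the induction hypotheses by the operational introduction rules for $\bullet$. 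For $A=\diay B$ (resp.\ $\boxy B$) one has $y\notin F$, hence $\kappa(F,A^\tau)=\diay\kappa(F\cup\{y\},B^\tau)$ (resp.\ with $\boxy$), and the sequents follow from the induction hypothesis on $B\in\mathcal{L}_{F\cup\{y\}}$ by monotonicity of $\diay$ (resp.\ $\boxy$), obtained from the corresponding introduction rules of $\mathrm{D.FO}$. Finally, for $A=\bulx B$ with $B\in\mathcal{L}_{F\setminus\{x\}}$ and $x\in F$, we have $A^\tau=B^\tau$ and so $\kappa(F,A^\tau)=\kappa(F,B^\tau)$; the induction hypothesis yields $\kappa(F\setminus\{x\},B^\tau)\dashv\vdash B$, monotonicity of $\bulx$ (via the rule $\BULX_L$ followed by $\bulx_R$ and $\bulx_L$) lifts this to $\bulx\kappa(F\setminus\{x\},B^\tau)\dashv\vdash\bulx B$, and the auxiliary claim identifies $\kappa(F,B^\tau)$ with $\bulx\kappa(F\setminus\{x\},B^\tau)$ up to $\dashv\vdash$; a single Cut then delivers both $\kappa(F,A^\tau)\vdash A$ and $A\vdash\kappa(F,A^\tau)$.

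I expect the main obstacle to be the $\bulx$-case and the auxiliary claim it rests on. This is where the free-variable bookkeeping is most delicate: one must verify that $x$ is fresh for every intermediate type, so that each cylindrification, each commutation, and each application of monotonicity is well-typed; and one must ensure that the reorderings of cylindrifications and their commutations past quantifiers are genuinely \emph{derivable} $\dashv\vdash$-equivalences in $\mathrm{D.FO}$ (supplied by the analytic structural rules corresponding to (a)--(f)) rather than merely semantically valid. The propositional and quantifier cases are, by comparison, routine monotonicity arguments.
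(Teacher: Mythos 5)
Your proof is correct, and its routine cases (atoms, propositional connectives, quantifiers) coincide with the paper's, but you resolve the crucial cylindrification case by a genuinely different decomposition. The paper writes $A=\bulxu\cdots\bulxk B$ with $B$ not of the form $\bulz C$, observes that $(\bulxu\cdots\bulxk B)^\tau=B^\tau$, and then runs a \emph{secondary} induction on $B$, pushing the whole structural block $\BULXU\cdots\BULXK$ through each connective of $B$ inside explicit derivation trees, using the distribution rules $(\BULX,\AAND)_L$, the commutation rules cq and cc, and the adjunction rule cadj, until the block reaches an atom and matches the atomic clause of $\kappa$. You instead peel a single $\bulx$, apply the primary induction hypothesis to $B\in\mathcal{L}_{F\setminus\{x\}}$, lift it by monotonicity of $\bulx$ (via $\BULX_L$, $\bulx_R$, $\bulx_L$, exactly as in Lemma \ref{lem:subperform}), and transport across types with the auxiliary lemma $\bulx\,\kappa(F,A)\dashv\vdash\kappa(F\cup\{x\},A)$ for $x\notin F\cup\mathsf{FV}(A)$, proved by its own induction from the derivable equivalences \emph{(a)}--\emph{(f)}. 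Both arguments rest on the same structural rules; yours buys a reusable, cleanly isolated statement about $\kappa$ (fresh enlargement of the ambient type equals one cylindrification up to interderivability) and avoids the nested induction, at the price of tracking freshness of $x$ through every intermediate type and of the degenerate subcase $x=y$ in the quantifier clause, which, as you correctly note, is a syntactic identity. The one point to make explicit is that the formula-level equivalences \emph{(a)}--\emph{(f)} you invoke must themselves be extracted from the corresponding structural rules of $\mathrm{D.FO}$ together with the operational introduction rules and display postulates; this is routine, and it is precisely what the paper's explicit derivation trees carry out in its version of the argument.
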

\begin{proof}
	We proceed by induction on the complexity of $A$:
	If $A$ is $R(t_{\overline{x}})$ then $\kappa(F,A^\tau)=A$ and we are done.
	If $A$ is $B\aand C$, notice that $\FV(A)=\mathsf{FV}(B)=\mathsf{FV}(C)=F$. then:
	\begin{center}
		\AX$\kappa(F,B^\tau)\fCenter B$
		\AX$\kappa(F,C^\tau)\fCenter C$
		\BI$\kappa(F,B^\tau)\AAND \kappa(F,C^\tau)\fCenter B\aand C$
		\UI$\kappa(F,B^\tau)\aand\kappa(F,C^\tau)\fCenter B\aand C$
		\DisplayProof
	\end{center}
	We work similarly for the other direction and for $\to$ and $\lor$.
	
	If $A$ is $\diax B$ then $\mathsf{FV}(B)=F\cup\{x\}$:
	
	\begin{center}
		\AX$\kappa(F\cup\{x\},B^\tau)\fCenter B$
		\UI$\DIAX \kappa(F\cup\{x\},B^\tau)\fCenter\diax B$
		\UI$\diax \kappa(F\cup\{x\},B^\tau)\fCenter\diax B$
		\DisplayProof
	\end{center}
	and likewise for the other direction.
	
	Finally assume that $A$ is of the form $\bulxu \cdots \bulxk B$ for some $k\in\omega$, where $B$ is not of the form $\bulz C$. We proceed by induction on the complexity of $B$. We wills show that $\BULXU \cdots\BULXK B\vdash\kappa(F,(\bulxu \cdots\bulxk B)^\tau)$ and $\kappa(F,(\bulxu \cdots\bulxk B)^\tau)\vdash\BULXK \cdots\BULXK B$. Notice  preliminarily that $(\bulxu \cdots\bulxk B)^\tau=B^\tau$ for all $B$.
	
	If $B$ is $R(t_{\overline{x}})$ then $\kappa(F,  B)=\bulxu \cdots\bulxk B$:
	\begin{center}
		\AX$\RTX \fCenter R(t_{\overline{x}})$
		\UI$R(t_{\overline{x}})\fCenter R(t_{\overline{x}})$
		\UI$\BULXU \cdots\BULXK R(t_{\overline{x}})\fCenter\BULXU \cdots\BULXK R(t_{\overline{x}})$
		\UI$\BULXU \cdots\BULXK R(t_{\overline{x}})\fCenter\bulxu \cdots\bulxk R(t_{\overline{x}})$
		\DisplayProof
	\end{center}
	and similarly for the other direction.
	
	If $B$ is $C\aand D$:
	
	\begin{center}
		\AX$\BULXU \cdots \BULXK C \fCenter \kappa(F,C^\tau)$
		\AX$\BULXU \cdots\BULXK D\fCenter\kappa(F,D^\tau)$
		\BI$\BULXU \cdots\BULXK C \AAND \BULXU \cdots\BULXK D \fCenter \kappa(F,C^\tau)\aand\kappa(F,D^\tau)$
		\UI$\BULXU \cdots\BULXK (C \AAND D) \fCenter \kappa(F,C^\tau)\aand\kappa(F,D^\tau)$
		\dashedLine
		\UI$(C \AAND D) \fCenter \BOXXK \cdots \BOXXU (\kappa(F,C^\tau)\aand\kappa(F,D^\tau))$
		\UI$(C\aand D) \fCenter \BOXXK \cdots \BOXXU (\kappa(F,C^\tau)\aand\kappa(F,D^\tau))$
		\UI$\BULXU \cdots\BULXK (C\aand D) \fCenter \BOXXK \cdots \BOXXU (\kappa(F,C^\tau)\aand\kappa(F,D^\tau))$
		\DP
	\end{center}
	and
{\footnotesize 	\begin{center}
		\AXC{$\kappa(F,C^\tau) \fCenter \BULXU \cdots\BULXK C$}
		\UIC{$\DIAXK \cdots \DIAXU \kappa(F,C^\tau)\fCenter C$}
		\AXC{$\kappa(F, D^\tau) \fCenter \BULXU \cdots\BULXK D$}
		\UIC{$\DIAXK \cdots \DIAXU \kappa(F,D^\tau) \fCenter D$}
		\BIC{$\DIAXK \cdots \DIAXU \kappa(F,C^\tau) \AAND \DIAXK \cdots \DIAXU \kappa(F,D^\tau) \fCenter C\aand D$}
		\dashedLine
		\UIC{$\BULXU \cdots\BULXK (\DIAXK \cdots \DIAXU \kappa(F,C^\tau) \AAND \DIAXK \cdots \DIAXU \kappa(F,D^\tau)) \fCenter \BULXU \cdots\BULXK (C\aand D)$}
\UIC{$\BULXU \cdots\BULXK \DIAXK \cdots \DIAXU \kappa(F,C^\tau) \AAND \BULXU \cdots\BULXK \DIAXK \cdots \DIAXU \kappa(F,D^\tau) \fCenter \BULXU \cdots\BULXK (C\aand D)$}
		\dashedLine
		\LL{\fns cadj$\times2k$}
		\UI$\kappa(F,C^\tau)\AAND\kappa(F,D^\tau)\fCenter \BULXU \cdots\BULXK (C\aand D)$
		\UI$\kappa(F,C^\tau)\aand\kappa(F,D^\tau)\fCenter \BULXU \cdots\BULXK (C\aand D)$
		\DP
	\end{center}}
	and if $B$ is $C\lor D$ and $C\to D$ we argue similarly  (see also proof of Lemma \ref{lem:subperform}).

	
	If $B$ is $\diax  C$, let us assume without loss of generality that $x_1=x$\footnote{Notice that if $x\notin\{x_1,\ldots,x_k\}$ the last two steps of the derivations are redundant.}:
	\begin{center}
		\AX$\BULXD \cdots\BULXK C\fCenter\kappa(F, C^\tau)$
		\UI$\DIAX \BULXD \cdots\BULXK C\fCenter\diax \kappa(F, C^\tau)$
		\LeftLabel{\fns cq$_L$}
		\dashedLine
		\UI$\BULXD \cdots\BULXK \DIAX C \fCenter\diax \kappa(F, C^\tau)$
		\dashedLine
		\UI$\DIAX C \fCenter \BOXXK \cdots \BOXXD \diax \kappa(F, C^\tau)$
		\UI$\diax C\fCenter \BOXXK \cdots \BOXXD \diax \kappa(F, C^\tau)$
		\UI$\BULXD \cdots\BULXK \diax C \fCenter \diax \kappa(F, C^\tau)$
		\UI$\BULX \BULXD \cdots\BULXK \diax C\fCenter\BULX \diax \kappa(F, C^\tau)$
		\UI$\BULX \BULXD \cdots\BULXK \diax C\fCenter\bulx  \diax \kappa(F, C^\tau)$
		\DisplayProof
	\end{center}
	and
	\begin{center}
		\AX$\kappa(F, C^\tau)\fCenter \BULXD \cdots\BULXK C$
		\UI$\DIAXK \cdots \DIAXD \kappa(F, C^\tau)\fCenter C$
		\UI$\DIAX \DIAXK \cdots \DIAXD \kappa(F, C^\tau)\fCenter\diax C$
		\dashedLine
		\UI$\kappa(F, C^\tau)\fCenter \BULXD \cdots\BULXK \BULX \diax C$
		\RL{\fns cc$_R$}
		\dashedLine
		\UI$\kappa(F, C^\tau)\fCenter \BULX \BULXD \cdots\BULXK \diax C$
		\UI$\DIAX \kappa(F, C^\tau)\fCenter \BULXD \cdots\BULXK \diax C$
		\UI$\diax \kappa(F, C^\tau)\fCenter \BULXD \cdots\BULXK \diax C$
		\UI$\BULX \diax \kappa(F, C^\tau)\fCenter \BULX \BULXD \cdots\BULXK \diax C$
		\UI$\bulx  \diax \kappa(F, C^\tau)\fCenter \BULX \BULXD \cdots\BULXK \diax C$
		\DP
	\end{center}
	
	For $\forall xC$ the proof is analogous.  This concludes the proof.
\end{proof}

\begin{cor}\label{cor:tocanonical}
	For any formula $A\in\mathcal{L}_F$  the sequents $A\vdash\kappa(F,A^\tau)$ and $\kappa(F,A^\tau)\vdash A$ are derivable in $\mathrm{D.FO}$.
\end{cor}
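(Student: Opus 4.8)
The plan is to reduce the general statement to the substitution-free case already settled in Lemma \ref{lem:canonicalform}, using the rewriting function $\sigma$ to eliminate the explicit substitution connectives and then gluing the resulting derivations together with two applications of Cut.

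First I would collect the three facts about $\sigma$ that make this work. By construction $\sigma(A)$ is free of explicit substitutions, so Lemma \ref{lem:canonicalform} is applicable to $\sigma(A)$; and by Lemma \ref{lem:substipropbasic} we moreover have $\sigma(A)\in\mathcal{L}_F$ together with $(\sigma(A))^\tau=A^\tau$. This last equality is the crucial bookkeeping point: it guarantees that the canonical formula attached to $\sigma(A)$ is literally the one attached to $A$, i.e.\ $\kappa(F,(\sigma(A))^\tau)=\kappa(F,A^\tau)$, so that no mismatch of target formulas can arise when the derivations are later combined.

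Next, applying Lemma \ref{lem:canonicalform} to $\sigma(A)$ yields derivations of $\kappa(F,A^\tau)\vdash\sigma(A)$ and $\sigma(A)\vdash\kappa(F,A^\tau)$ in $\mathrm{D.FO}$. Independently, Lemma \ref{lem:subperform} provides derivations of $A\vdash\sigma(A)$ and $\sigma(A)\vdash A$. The two required sequents are then obtained by cutting on $\sigma(A)$: from $A\vdash\sigma(A)$ and $\sigma(A)\vdash\kappa(F,A^\tau)$ one derives $A\vdash\kappa(F,A^\tau)$, and from $\kappa(F,A^\tau)\vdash\sigma(A)$ and $\sigma(A)\vdash A$ one derives $\kappa(F,A^\tau)\vdash A$.

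Since the substantive work has already been carried out in Lemmas \ref{lem:substipropbasic}, \ref{lem:subperform} and \ref{lem:canonicalform}, the corollary is essentially a composition argument and there is no genuine obstacle. The only point needing a moment's care is checking the hypothesis of Lemma \ref{lem:canonicalform}: one must be sure that $\sigma$ really removes every explicit substitution connective (the $\bultfp$-operators), leaving only propositional connectives, quantifiers, atomic formulas, and the admissible strings of $\bulx$-operators. This is precisely what clause~5 of the recursive definition of $\sigma$ achieves, by pushing each substitution through the formula structure and performing it on the atomic formulas, so that the two independent families of derivations are composable via Cut exactly as described.
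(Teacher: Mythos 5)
Your proposal is correct and follows essentially the same route as the paper's own proof: apply Lemma \ref{lem:subperform} and Lemma \ref{lem:canonicalform} to $\sigma(A)$, use Lemma \ref{lem:substipropbasic} to identify $\kappa(F,\sigma(A)^\tau)$ with $\kappa(F,A^\tau)$, and compose with Cut. The only (harmless) difference is that you spell out the applicability check for Lemma \ref{lem:canonicalform} more explicitly than the paper does.
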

\begin{proof}
	By Lemmas \ref{lem:subperform} and \ref{lem:canonicalform}, the sequents $A\vdash\sigma(A)$ and  $\sigma(A)\vdash\kappa(F,\sigma(A)^\tau)$ are derivable in $\mathrm{D.FO}$. Hence, by applying Cut,  $A\vdash \kappa(F,\sigma(A)^\tau)$ is derivable in $\mathrm{D.FO}$. Finally, by Lemma \ref{lem:substipropbasic} $\sigma(A)^\tau=A^\tau$. This concludes the proof of the first part of the statement. The second part is argued analogously.
\end{proof}

\begin{cor}\label{cor:keyprovability}
	If $A,B\in\mathcal{L}_F$ and $A^\tau=B^\tau$ then $A\vdash B$ is derivable in $\mathrm{D.FO}$.
\end{cor}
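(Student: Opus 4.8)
The plan is to obtain this corollary as an immediate consequence of Corollary \ref{cor:tocanonical} together with a single application of Cut. The crucial observation is that the canonical form $\kappa(F, A^\tau)$ depends on $A$ only through the type $F$ and the first-order formula $A^\tau$; the map $\kappa$ takes as input precisely the pair $(F, A^\tau)$. Hence the hypothesis $A^\tau = B^\tau$ gives, with no further work, that the two canonical forms coincide as $\mathcal{L}_{\mathrm{MT}}$-formulas: $\kappa(F, A^\tau) = \kappa(F, B^\tau)$. This is really the entire content of the argument, and it is what makes the canonical form a useful normalizing device.

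Concretely, first I would apply Corollary \ref{cor:tocanonical} to $A$, obtaining that $A \vdash \kappa(F, A^\tau)$ is derivable in $\mathrm{D.FO}$. Next I would apply the same corollary to $B$, this time using the half that gives derivability of $\kappa(F, B^\tau) \vdash B$. Since $\kappa(F, A^\tau) = \kappa(F, B^\tau)$ by the observation above, the succedent of the first sequent and the antecedent of the second are the very same formula, so a single instance of the Cut rule on this common formula yields $A \vdash B$, as required.

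The only step requiring genuine care — and the nearest thing to an obstacle — is verifying that all the sequents involved are well-typed, so that the Cut takes place within a single type. Here one uses that the $\bulx$-operators appearing in $\mathcal{L}_{\mathrm{MT}}$-formulas inflate the type without contributing free variables to the translation, so that $\mathsf{FV}(A^\tau) \subseteq F$ whenever $A \in \mathcal{L}_F$. Consequently $\kappa(F, A^\tau) \in \mathcal{L}_{F \cup \mathsf{FV}(A^\tau)} = \mathcal{L}_F$, and likewise for $B$; thus the cut formula $\kappa(F, A^\tau) = \kappa(F, B^\tau)$ and both endpoints $A$ and $B$ are $F$-formulas, and the derivation proceeds entirely in type $F$. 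I expect no difficulty beyond this bookkeeping, since the substantive proof-theoretic work has already been discharged in Lemmas \ref{lem:subperform} and \ref{lem:canonicalform} and assembled in Corollary \ref{cor:tocanonical}.
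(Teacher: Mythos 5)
Your proof is correct and follows exactly the same route as the paper: both halves of Corollary \ref{cor:tocanonical} give $A\vdash\kappa(F,A^\tau)$ and $\kappa(F,B^\tau)\vdash B$, the hypothesis $A^\tau=B^\tau$ makes the cut formula literally the same, and one application of Cut finishes. The extra typing remarks are sound bookkeeping but add nothing beyond what the paper leaves implicit.
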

\begin{proof}
	By Corollary \ref{cor:tocanonical} $A\vdash\kappa(F,A^\tau)$ and $\kappa(F,B^\tau)\vdash B$ are derivable. Since $A^\tau=B^\tau$, we derive $A\vdash B$ from these two sequents by applying  Cut.
\end{proof}

The corollary above implies that in order to prove completeness in the sense specified in Theorem \ref{theo:completeness}, it is enough to prove that, for every $\mathcal{L}$-formula $A$ which is a theorem of first-order logic and every $F\in\mathcal{P}_\omega(\mathsf{Var})$, there exists some $A'\in\mathcal{L}_F$ with $(A')^\tau=A$ such that $\vdash_F A' $ is provable in $\mathrm{D.FO}$. In what follows, we will provide the required derivations.

If $A$ is a propositional tautology then $\kappa(F,A)$ is derivable using the propositional fragment of $\mathrm{D.FO}$. The derivations are omitted. If $A$ is of the form $\forall x(B\rightarrow C)\rightarrow(\forall xB\rightarrow \forall xC)$ then let $D:=\kappa(F\cup\{x\},B)$ and $E:=\kappa(F\cup\{x\},C)$:

\begin{center}
	\AX$D \fCenter D$
	\UI$\boxx D \fCenter \BOXX D$
	\UI$\BULX \boxx D \fCenter D$
	
	\AX$E \fCenter E$
	
	\BI$D \ararr E \fCenter \BULX \boxx D \ARARR E$
	\UI$\boxx (D \ararr E) \fCenter \BOXX (\BULX \boxx D \ARARR E)$
	\UI$\BULX \boxx D \ararr E) \fCenter \BULX \boxx D \ARARR E$
	\UI$\BULX \boxx D \AAND \BULX \boxx (D \ararr E) \fCenter E$
	\UI$\BULX (\boxx D \AAND \boxx (D \ararr E)) \fCenter E$
	\UI$\boxx D \AAND \boxx (D \ararr E) \fCenter \BOXX E$
	\UI$\boxx D \AAND \boxx (D \ararr E) \fCenter \boxx E$
	\UI$\boxx (D \ararr E) \fCenter \boxx D \ARARR \boxx E$
	\UI$\boxx (D \ararr E)\fCenter \boxx D \rightarrow \boxx E$
	\UI$\AATOP_{\!F} \fCenter \boxx (D \ararr E) \ARARR (\boxx D \ararr \boxx E)$
	\UI$\AATOP_{\!F} \fCenter \boxx (D \ararr E) \ararr ( \boxx D \ararr \boxx E)$
	\DP
\end{center}
which yields the desired result given that $(\boxx  (D \rightarrow E)\rightarrow(\boxx  D \rightarrow \boxx  E))^\tau=A$. In case $x\in F$ we apply the introduction of $\BULX $ in the penultimate step.

If $A$ is of the form $B\rightarrow\forall xB$ where $x\notin\mathsf{FV}(B)$, then let $\kappa(F\setminus\{x\},B)=C$:

\begin{center}
	\AX$C \fCenter C$
	\UI$\BULX C \fCenter \BULX C$
	\UI$\BULX C \fCenter \bulx C$
	\UI$C \fCenter \BOXX \bulx C$
	\UI$C \fCenter \boxx \bulx C$
	\UI$\AATOP_{\!F} \fCenter C \ARARR \boxx \bulx C$
	\UI$\AATOP_{\!F} \fCenter C \ararr \boxx \bulx C$
	\DP
\end{center}
which yields the desired result given that $(C\rightarrow\boxx \bulx  C)^\tau=A$.
If $A$ is of the form $\forall xB\rightarrow B(t/x)$, where $t$ is free for $x$ in $B$ then let $C=\kappa(\mathsf{FV}(B),B)$ and $(t_x,\overline{y}_{\overline{y}})$ for $\overline{y}=\mathsf{FV}(B)\setminus\{x\}$, and $\sgr{(\overline{y}_{\overline{y}})}$ is the identity substitution. We proceed by cases. If $x\in \mathsf{FV}(B)$:
\begin{center}
\AX$C \fCenter C$
\UI$\BULTXYOY C \fCenter\BULTXYOY C$
\UI$C \fCenter \BOXTXYOY \BULTXYOY C$
\UI$\boxx C \fCenter \BOXX \BOXTXYOY \BULTXYOY C$
\UI$ \BULTXYOY \BULX \boxx C\fCenter\BULTXYOY C$
\LL{\fns sc$_L$}
\UI$\BULZU \cdots\BULZK \BULYOYO \BULYOYO \boxx C \fCenter \BULTXYOY C$
\UI$\bulzu \cdots\bulzk \bulyoyo \boxx C\fCenter\BULTXYOY C$
\UI$\AATOP_{\!F} \fCenter \bulzu \cdots \bulzk \bulyoyo \boxx C \ARARR \BULTXYOY C$
\UI$\AATOP_{\!F} \fCenter \bulzu \cdots \bulzk \bulyoyo \boxx C\ararr \BULTXYOY C$
\DP
\end{center}
which yields the desired result given that $(\bulzu \cdots\bulzk \bulyoyo \boxx C\rightarrow\BULTXYOY C)^\tau=A$. If $x\notin \mathsf{FV}(B)$:
\begin{center}
\AX$C \fCenter C$
\UI$\BULX C\fCenter\BULX C $
\UI$\BULTXYOY \BULX C \fCenter \BULTXYOY \BULX C$
\UI$\BULX C \fCenter \BOXTXYOY \BULTXYOY \BULX C$
\UI$\bulx C \fCenter \BOXTXYOY \BULTXYOY \BULX C$
\UI$\boxx \bulx C \fCenter \BOXX \BOXTXYOY \BULTXYOY \BULX C$
\UI$\BULTXYOY \BULX \boxx \bulx  C\fCenter\BULTXYOY \BULX C$
\LeftLabel{\fns sc$_L$}
\UI$\BULZU \BULZD \cdots\BULZK \BULYOYO \boxx \bulx C\fCenter\BULZU \BULZD \cdots\BULZK \BULYOYO C$
\LL{\fns $\BULZU_M$}
\UI$\BULZD \cdots\BULZK \BULYOYO \boxx \bulx C\fCenter\BULZD \cdots\BULZK \BULYOYO C$
\dashedLine
\UI$\BULYOYO \boxx \bulx C\fCenter\BULYOYO C$
\UI$\bulyoyo \boxx \bulx C\fCenter\bulyoyo C$
\UI$\AATOP_{\!F} \fCenter \bulyoyo \boxx \bulx C \ARARR \bulyoyo C$
\UI$\AATOP_{\!F} \fCenter \bulyoyo \boxx \bulx C\rightarrow\bulyoyo C$
\DP
\end{center}
which yields the desired result given that $(\bulyoyo    \boxx \bulx  C\rightarrow\bulyoyo    C)^\tau=A$.

As for the equality axioms we have
\begin{center}
	\AX$\ETT \fCenter t = t$
	\UI$\AATOP_{\!F}, \ETT \fCenter t = t$
	\UI$\AATOP_{\!F} \fCenter t = t$
	\DP
\end{center}
Let $A$ be a formula with free variable $v$, let $\{x_1,\ldots,x_n\}=\FV(A)\setminus\FV(t,s)$, $\{y_1,\ldots,y_m\}=\FV(s)\setminus\FV(t,A)$ and $\{z_1,\ldots,z_k\}=\FV(t)\setminus\FV(s,A)$, and let's assume without loss of generality that $v\notin\FV(s,t)$. Let $$B=\bulyu \cdots\bulym \bulzu \cdots\bulzk A.$$ Notice that $A=B^\tau$. We have
{\small \begin{center}
	\AX$B \fCenter B$
	\UI$\BULSVZYX B\fCenter \BULSVZYX B$
	\UI$\BULV \BULSVZYX B\fCenter\BULV  \BULSVZYX B$
	\UI$\BULSVZYX \BULV \BULSVZYX B\fCenter\BULSVZYX \BULV  \BULSVZYX B$
	\LeftLabel{\fns sc$_L$}
	\UI$\BULZYX \BULSVZYX B\fCenter\BULSVZYX \BULV  \BULSVZYX B$
	\LeftLabel{\fns ss$_L$}
	\UI$\BULSVZYX B\fCenter\BULSVZYX \BULV  \BULSVZYX B$
	\UI$\BULXU \cdots\BULXN \EST \AAND \BULSVZYX B\fCenter\BULSVZYX \BULV \BULSVZYX B$
	\UI$\BULXU \cdots\BULXN \EST \fCenter \BULSVZYX B \ARARR \BULSVZYX \BULV \BULSVZYX B$
	\UI$\BULXU \cdots\BULXN \EST \fCenter \BULSVZYX (B \ARARR \BULV \BULSVZYX B)$
	\UI$\BULXU \cdots\BULXN \EST \fCenter \BULTVZYX (B \ARARR \BULV \BULSVZYX B)$
	\UI$\BULXU \cdots\BULXN \EST \fCenter\BULTVZYX B \ARARR \BULTVZYX \BULV \BULSVZYX B$
	\UI$\BULXU \cdots\BULXN \EST \AAND \BULTVZYX B\fCenter\BULTVZYX \BULV \BULSVZYX B$
	\RightLabel{sc$_R$}
	\UI$\BULXU \cdots\BULXN \EST \AAND \BULTVZYX B \fCenter \BULZYX  \BULSVZYX B$
	\RightLabel{ss$_R$}
	\UI$\BULXU \cdots\BULXN \EST \AAND \BULTVZYX B \fCenter \BULSVZYX B$
	\dashedLine
	\UI$\bulxu \cdots\bulxn s = t \fCenter \bultvzyx B \ARARR \bulsvzyx B$ 
	\DP
\end{center}}
which yields the desired result given that {\scriptsize$$(\bulxu \cdots\bulxn s=t\,\to\,(\st{s_v,\overline{z}_{\overline{z}},\overline{y}_{\overline{y}},\overline{x}_{\overline{x}}}B\,\to\, \st{s_v,\overline{z}_{\overline{z}},\overline{y}_{\overline{y}},\overline{x}_{\overline{x}}}B))^\tau\,=\,(s=t\to(A(s/y)\to A(t/y))).$$}

Finally using necessitation we obtain the universal closure of tautologies by repeated application of the following pattern:

\begin{center}
	\AX$\AATOP_{\!F\cup\{x\}} \fCenter A$
	\UI$\BULX \AATOP_{\!F} \fCenter A$
	\UI$\AATOP_{\!F} \fCenter \BOXX A$
	\UI$\AATOP_{\!F} \fCenter \boxx A$
	\DP
\end{center}

This concludes the proof of completeness. 

\subsection{Conservativity}
To argue that the calculus $\mathrm{D.FO}$  conservatively extend the axiomatic system $\vdash_{\mathrm{FO}}$, we follow the standard proof strategy discussed in \cite{GMPTZ,linearlogPdisplayed}.  We need to show that, for all formulas $A$ of $\mathcal{L}$, if $\top_F\vdash_F \kappa(F,A)$ is derivable in $\mathrm{D.FO}$,  then  $\vdash_{\mathrm{FO}} A$. This claim can be proved using  the following facts: (a) the rules of $\mathrm{D.FO}$ are sound w.r.t.~~the semantics of heterogeneous $\mathcal{L}$-algebras (cf.~Section \ref{ssec:sound});  (b) The Hilbert-style system $\vdash_{\mathrm{FO}}$ is complete w.r.t.~first-order logic models (cf.\ Section \ref{ssec:folprel}); and (c)  first-order models are equivalently presented as heterogeneous  algebras (cf.~Section \ref{sec:mtfol}), so that the semantic consequence relations arising from each type of structures preserve and reflect the translation (cf.~Proposition \ref{prop:sematicfaith}). Then, let $A$ be an $\mathcal{L}$-formula. If  $\top_F\vdash_F \kappa(F,A)$ is derivable in  $\mathrm{D.FO}$, then, by (a),  $(\mathbb{H},V)\models\top_F\leq_F \kappa(F,A)$, for every heterogeneous algebraic $\mathcal{L}$-model $(\mathbb{H},V)$. By (c), this implies that $M \models A$, where $M$ is a first order model. By (b), this implies that $\vdash_{\mathrm{FO}} A$, as required.

\subsection{Cut elimination and subformula property}

In the present section, we outline the proof of cut elimination and subformula property for the calculus $\mathrm{D.FO}$ of Section \ref{sec:multicalc}. As discussed earlier on, the design of this calculus allows for its cut elimination and subformula property to be inferred from a metatheorem, following the strategy introduced by Belnap for display calculi. The metatheorem to which we will appeal is \cite[Theorem 4.1]{TrendsXIII}  (cf.\ \cite[Section 3]{TrendsXIII}) for the class of \emph{multi-type calculi}, of which  $\mathrm{D.FO}$ is a particularly well-behaved element, since it enjoys the full display property. For this reason, $\mathrm{D.FO}$ satisfies the following more restricted version of condition C$'''_5$:

\noindent \textbf{C$'''_5$: Closure of axioms under cut.} If $x\vdash a$ and  $a\vdash y $  are axioms, then $x\vdash y$ is  again an  axiom.

By this metatheorem, it is enough to verify that $\mathrm{D.FO}$  meets the conditions  listed in \cite[Section 3]{TrendsXIII} with C$'''_5$ modified as indicated above. All conditions except C$_8$ are readily satisfied by inspecting the rules. In what follows we verify C$_8$. We only treat the cases of the atomic formulas and the heterogeneous connectives:
\paragraph*{Atomic formulas:}
We have:
\begin{center}
	\begin{tabular}{ccc}
		\bottomAlignProof
		\AX$\RTX  \fCenter R(\overline{t}_{\overline{x}})$
		\AXC{$\vdots$ \raisebox{1mm}{$\pi_1$}}
		\noLine
		\UI$\RTX  \fCenter Y$
		\UI$ R(\overline{t}_{\overline{x}}) \fCenter Y$
		\BI$\RTX  \fCenter Y$
		\DisplayProof
		
		& $\rightsquigarrow$ &
		
		\bottomAlignProof
		\AXC{ $\vdots$ \raisebox{1mm}{$\pi_1$}}
		\noLine
		\UI$\RTX  \fCenter Y$
		\DisplayProof
		\\
	\end{tabular}
\end{center}

\paragraph*{Quantifiers and their adjoint:}

\begin{center}
	\begin{tabular}{ccc}
		\bottomAlignProof
		\AXC{\ \ \ $\vdots$ \raisebox{1mm}{$\pi_1$}}
		\noLine
		\UI$X \fCenter A$
		\UI$\DIAX X \fCenter \diax A$
		\AXC{\ \ \ $\vdots$ \raisebox{1mm}{$\pi_2$}}
		\noLine
		\UI$\DIAX A \fCenter Y$
		\UI$\diax A \fCenter Y$
		\BI$\DIAX X \fCenter Y$
		\DP
		
		& $\rightsquigarrow$ &
		
		\!\!\!\!\!\!\!
		\bottomAlignProof
		\AXC{\ \ \ $\vdots$ \raisebox{1mm}{$\pi_1$}}
		\noLine
		\UI$X \fCenter A$
		\AXC{\ \ \ $\vdots$ \raisebox{1mm}{$\pi_2$}}
		\noLine
		\UI$\DIAX A \fCenter Y$
		\UI$A \fCenter \BULX Y$
		\BI$X \fCenter\BULX Y$
		\UI$\DIAX X \fCenter Y$
		\DP
		\\
	\end{tabular}
\end{center}

\medskip
\begin{center}
	\begin{tabular}{ccc}
		\bottomAlignProof
		\AXC{\ \ \ $\vdots$ \raisebox{1mm}{$\pi_1$}}
		\noLine
		\UI$X \fCenter \BOXX A$
		\UI$X \fCenter \boxx A$
		\AXC{\ \ \ $\vdots$ \raisebox{1mm}{$\pi_2$}}
		\noLine
		\UI$A \fCenter Y$
		\UI$\boxx A \fCenter \BOXX Y$
		\BI$X \fCenter \BOXX Y$
		\DisplayProof
		
		& $\rightsquigarrow$ &
		
		\!\!\!\!\!\!\!
		\bottomAlignProof
		\AXC{\ \ \ $\vdots$ \raisebox{1mm}{$\pi_1$}}
		\noLine
		\UI$X \fCenter \BOXX A$
		\UI$ \BULX X \fCenter A$
		\AXC{\ \ \ $\vdots$ \raisebox{1mm}{$\pi_2$}}
		\noLine
		\UI$A \fCenter Y$
		\BI$\BULX X \fCenter Y$
		\UI$X\fCenter \BOXX Y$
		\DP
		\\
	\end{tabular}
\end{center}
\medskip
\begin{center}
	\begin{tabular}{ccc}
		\bottomAlignProof
		\AXC{\ \ \ $\vdots$ \raisebox{1mm}{$\pi_1$}}
		\noLine
		\UI$X \fCenter \BULX A$
		\UI$X \fCenter \bulx  A$
		\AXC{\ \ \ $\vdots$ \raisebox{1mm}{$\pi_2$}}
		\noLine
		\UI$\BULX A \fCenter Y$
		\UI$\bulx   A \fCenter Y$
		\BI$X \fCenter Y$
		\DisplayProof
		
		& $\rightsquigarrow$ &
		
		\!\!\!\!\!\!\!
		\bottomAlignProof
		\AXC{\ \ \ $\vdots$ \raisebox{1mm}{$\pi_1$}}
		\noLine
		\UI$X \fCenter \BULX A$
		\UI$\DIAX X \fCenter A$
		\AXC{\ \ \ $\vdots$ \raisebox{1mm}{$\pi_2$}}
		\noLine
		\UI$\BULX A \fCenter Y$
		
		\UI$A \fCenter \BOXX Y$
		\BI$\DIAX X \fCenter \BOXX Y$
		\UI$\BULX \DIAX X \fCenter Y$
		\LeftLabel{\fns cadj}
		\UI$X \fCenter Y $
		\DP
		\\
	\end{tabular}
\end{center}

\noindent The cases for $\bultf  $ is done similarly to the one above.

\section{Conclusions and further directions}\label{sec:condir}

\paragraph{Contributions.}
In this article we have introduced a proper multi-type display calculus for classical first-order logic and shown that it is sound, complete, conservative with respect to the standard classical first-order logic and enjoys cut elimination and the subformula property. We intended  to capture first-order logic with a calculus in which quantifiers are represented also at the structural level and rules are closed under uniform substitution. We achieved this by developing an idea of Wansing's, that the proof-theoretic treatment of quantifiers can emulate that of modal operators, in a multi-type setting in which formulas with different sets of free variables have different types. This multi-type environment is supported semantically by certain classes of heterogeneous algebras in which the maps interpreting the existential and universal quantifiers are the left and the right adjoint respectively of one and the same injective Boolean algebra homomorphism. The same adjunction pattern accounts for the semantics of substitution.

\paragraph{Correspondence theory for first-order logic.}
The semantic analysis in Section \ref{sec:semanalfol} can be regarded as a multi-type and ALBA-powered version of the correspondence results observed in \cite{vbpredicate}. Thanks to the systematic connections established between unified correspondence theory and the theory of analytic display calculi, we are now in a position to apply the results and insights of unified correspondence to the semantic environment of Section \ref{sec:semanalfol} and systematically exploit them for proof-theoretic purposes. This might turn out to be a useful tool in the analysis of generalized quantifiers (see discussion below).

\paragraph{A modular environment.}
Thanks to the fact that, in the environment of this calculus, both substitutions and quantifiers are explicitly represented as logical and structural connectives, we can now explore systematically the space of their properties and their possible interactions. For instance, the rules {\fns sq$_R$} and {\fns sq$_L$} express in a transparent and explicit way the book-keeping concerning variable capturing in first-order logic. More interestingly, this environment allows for a finer-grained analysis of fundamental interactions between quantifiers and intensional connectives. For instance, in the present calculus the rules in Definition \ref{def:FODL}.15 encode the fact that the cylindrification maps are Boolean algebra homomorphisms, which in turn captures the fact that classical propositional connectives are all extensional. If we change the propositional base to e.g.\ intuitionistic or bi-intuitionistic logic, or if we expand classical first-order logic with modal connectives it is desirable to allow for some additional flexibility. For instance the constant domain axiom $\forall x(B(x)\lor A)\to((\forall xB(x))\lor A)$ can be captured by the analytic structural rule below on the left hand side, which is interderivable with the one on the right hand side:
\begin{center}
	\begin{tabular}{lcrr}

		\AX$X \fCenter \BOXX (Y \AOR \BULX Z)$
		\UI$X \fCenter \BOXX Y \AOR Z$
		\DP
		&
		
		&
		\AX$\BULX X \ADRARR \BULX Y \fCenter Z$
		\UI$\BULX (X \ADRARR Y) \fCenter Z$
		\DP
		&($\ast$)	
	\end{tabular}
\end{center}
Indeed:
{\small \begin{center}
	\begin{tabular}{lr}

		\AX$X \fCenter \BOXX (Y \AOR \BULX Z)$
		\UI$\BULX X \fCenter Y \AOR \BULX Z$
		\UI$\BULX Z \ADRARR \BULX X \fCenter Y$
		\UI$\BULX (Z \ADRARR X) \fCenter Y$
		\UI$Z \ADRARR X \fCenter \BOXX Y$
		\UI$X \fCenter \BOXX Y \AOR Z$
		\DP
		&

		\AX$\BULX X \ADRARR \BULX Y \fCenter Z$
		\UI$\BULX Y \fCenter Z \AOR \BULX X$
		\UI$Y \fCenter \BOXX (Z \AOR \BULX X)$
		\UI$Y \fCenter \BOXX Z \AOR X$
		\UI$X \ADRARR Y \fCenter \BOXX Z$
		\UI$\BULX (X \ADRARR Y)\fCenter Z$
		\DP
	\end{tabular}
\end{center}}
Notice that in the hypersequent calculus for bi-intuitionistic first-order logic the constant domain axiom is derivable from the Mix rule, capturing the prelinearity axiom $(A\rightarrow B)\lor(B\to A)$. In the context of this calculus the prelinearity axiom corresponds to the rule
\begin{center}
	\AX$X \fCenter Y$
	\AX$W \fCenter Z$
	\BI$\AATOP_{\!F} \fCenter (X \ARARR Z) \AOR (W \ARARR Y)$
	\DisplayProof
\end{center}
we conjecture that in the present framework the constant domain axiom is not derivable in $\mathrm{D.FO}$ without the two rules in ($\ast$) and with the prelinearity rule above.

\paragraph{A modular environment?}
Notwithstanding the increased modularity that this calculus allows, we feel that something more can be done. For instance, the heterogeneous modal operators use individual variables and terms as parameters, much in the same way in which actions and agents were used as parameters in the first versions of the display calculus for DEL  (cf.\ \cite{GKPLori,FGKPS14a}). While this choice is unproblematic in respect to the cut elimination and actually allows to retrieve it from the metatheorem of \cite{TrendsXIII}, it is also practically cumbersome, since it constrains us to admit as axioms sequents which are not useful when deriving actual formulas (indeed substitution axioms in which structural adjoints $\DIATFP$ and $\BOXTFP$ occur on both sides of the sequent cannot possibly occur in derivations that conclusions of which are pure formulas), and moreover recognizing whether such a sequent is an axiom requires lengthy calculations which should be performed internally to the calculus rather than externally. This is the focus of current investigation. Indeed having axioms and rules such as the following

\begin{center}
	\begin{tabular}{ccccc}

		$x \approx x$
		&&	
		$c\approx c$
		& &
		\AXC{$ t_1\approx t_2$}
		\AXC{$ t_2\approx t_3$}
		\BIC{$ t_1\approx t_3$}
		\DP

	\end{tabular}
\end{center}
\begin{center}
	\begin{tabular}{ccc}
		\AXC{$t_1\approx s_1$}
		\AXC{$\cdots$}
		\AXC{$t_n\approx s_n$}
		\TIC{$f(t_1,\ldots,t_n)\approx f(s_1,\ldots, s_n)$}
		\DP
		&
		\AXC{$t\approx s$}
		\UIC{$t\approx \BULSX x$}
		\DP
		&
		\AXC{$t\approx s$}
		\UIC{$\BULSX x\approx s$}
		\DP
	\end{tabular}
\end{center}
would be much neater. This choice would also be compatible with rules such as
\begin{center}
	\bottomAlignProof
	\AX$\BULX (X \AAND Y) \fCenter Z$
	\UI$\BULX X \AAND \BULX Y \fCenter Z$
	\DP
\end{center}
which are completely unproblematic when $x$ is a parameter as in the present setting but would violate the condition C5 against proliferation when considered a term. However, the metatheorem of \cite{TrendsXIII} allows proliferation limited to ``flat'' types, i.e.\ types the only rules of which are identity and cut. We are working towards a solution which allows to integrate variables and terms as types.

\paragraph{Generalized quantifiers.}
The semantic analysis of Section \ref{sec:semanalfol} shows that quantifiers and substitutions correspond to a very restricted class of maps $f:M^S\to M^T$. Therefore it is natural to ask whether different notions of quantification and substitutions can be investigated in connection with larger or different classes of such functions. This study could be relevant to the analysis of generalized quantifiers in natural language semantics \cite{barwise1981generalized,hintikka1974quantifiers,lewis1975adverbs,lindstrom1966first,westerstaahl1989quantifiers}, to dependence and independence logic \cite{vaananen2007dependence,gradel2013dependence,kontinen2013axiomatizing} and to the analysis of different notions of substitutions \cite{pitts2013nominal,scott2008algebraic,szawiel2014theories}.





\end{document}